\theoremstyle{plain}
\newtheorem{theo}{Theorem}[section]
\newtheorem{claim}{Claim}
\newtheorem{lem}[theo]{Lemma}
\newtheorem{conj}[theo]{Conjecture}
\newtheorem{obs}{Observation}
\theoremstyle{definition}
\newcommand\blfootnote[1]{%
  \begingroup
  \renewcommand\thefootnote{}\footnote{#1}%
  \addtocounter{footnote}{-1}%
  \endgroup
}
\theoremstyle{remark}
\title{Counting Hamiltonian cycles in planar triangulations}
\author{Xiaonan Liu, Zhiyu Wang, and Xingxing Yu \footnote{Partially supported by NSF Grant DMS 1954134.}
\\School of Mathematics\\Georgia Institute of Technology\\Atlanta, GA 30332 USA}
\date{}
\begin{document}
\maketitle
\begin{abstract}
Hakimi, Schmeichel, and Thomassen [\textit{J. Graph Theory, 1979}] conjectured that every $4$-connected planar triangulation $G$ on $n$ vertices has at least $2(n-2)(n-4)$ Hamiltonian cycles, with equality if and only if $G$ is a double wheel. In this paper, 
we show that every $4$-connected planar triangulation on $n$ vertices has $\Omega(n^2)$ Hamiltonian cycles. Moreover, we show that if $G$ is a $4$-connected planar triangulation on $n$ vertices and the distance between any two vertices of degree $4$ in $G$ is at least $3$, then $G$ has $2^{\Omega(n^{1/4})}$ Hamiltonian cycles. 
\end{abstract}
\blfootnote{Corresponding Author: Xiaonan Liu.}
\blfootnote{AMS Subject Classification: 05C10, 05C30,  05C38, 05C40, 05C45.}
\blfootnote{Keywords: Hamiltonian cycles, planar triangulations, Tutte paths, Tutte cycles.}

\section{Introduction}
A graph $G$ is called \textit{Hamiltonian} if it contains a \textit{Hamiltonian cycle}, i.e., a cycle that contains every vertex in $G$. A $k$-vertex cycle (or a \textit{k-cycle}) $C$ in a connected graph $G$ is said to be \textit{separating} if the graph obtained from $G$ by deleting $C$ is not connected. A separating $3$-cycle is also called a \textit{separating triangle}. A graph $G$ is \textit{$k$-connected} if it has more than $k$ vertices and if it remains connected when fewer than $k$ vertices are removed.  The \textit{distance} between two vertices in a graph is the number of edges in a shortest path in the graph connecting them.
A \textit{planar triangulation} is an edge-maximal plane graph with at least three vertices, i.e., every face is bounded by a triangle. By Euler's theorem, an $n$-vertex planar triangulation has exactly $3n-6$ edges. 

Whitney \cite{Whitney1931} showed in 1931 that every planar triangulation without separating triangles is Hamiltonian. In 1956, Tutte \cite{Tutte1956} extended Whitney's result by showing that every $4$-connected planar graph is Hamiltonian. Thomassen \cite{Thomassen1983} further strengthened Tutte's result in 1983 by showing that every $4$-connected planar graph is \textit{Hamiltonian connected}, i.e., any two distinct vertices are connected by a Hamiltonian path. These results have been extended to graphs on other surfaces, see {\color{blue} e.g.,}\cite{Altshuler1972, Thomas-Yu1994, Thomas-Yu1997, TYZ2005}. It is possible that results there can be combined with the methods in this paper to obtain similar counting results on Hamiltonian cycles in certain triangulations on other surfaces.

It is a natural problem to consider the number of cycles in a graph, which also has applications in coding theory according to  \cite{AACST2013a, AACST2013b, AACOST2015}. The problem of determining the number of Hamiltonian cycles in $4$-connected planar triangulations was initated by Hakimi, Schmeichel, and Thomassen \cite{HST1979} who showed in 1979 that every $4$-connected planar triangulation on $n$ vertices has at least $n/\log_2 n$ Hamiltonian cycles. In the same paper, they conjectured a lower bound which is quadratic in the number of vertices and realized by the double wheel. A {\it double wheel} is a planar triangulation obtained from a cycle by adding two vertices and all edges from these two vertices to the vertices of the cycle.

\begin{conj}[Hakimi, Schmeichel, and Thomassen \cite{HST1979}]\label{conj:HST}
If $G$ is a $4$-connected planar triangulation on n vertices, then $G$ has at least $2(n -2)(n - 4)$ Hamiltonian cycles, with equality if and only if $G$ is a double wheel.
\end{conj}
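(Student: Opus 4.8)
The plan is to prove both the bound and the equality characterization by induction on $n$, using Thomassen's theorem that every $4$-connected planar graph is Hamiltonian connected as the engine that manufactures distinct Hamiltonian cycles, and calibrating the induction against the exact count for the double wheel. First I would record the target value by decomposing Hamiltonian cycles of the double wheel $DW_n$. Writing the rim as a cycle $x_1\cdots x_m$ with $m=n-2$ and hubs $u,v$, every Hamiltonian cycle traverses $u$ and $v$ exactly once and so has the shape $u-A-v-B-u$, where $A$ and $B$ are two contiguous rim arcs partitioning $\{x_1,\dots,x_m\}$; equivalently $H$ arises by deleting two rim edges and attaching $u,v$ to the four (or, for a singleton arc, three) resulting arc-ends. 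Counting attachments, a cut into two arcs each of size at least two contributes $4$ cycles while a cut producing a singleton arc contributes $2$, giving
\begin{equation*}
4\left(\binom{m}{2}-m\right)+2m=2m(m-2)=2(n-2)(n-4)
\end{equation*}
Hamiltonian cycles. This confirms $DW_n$ realizes the conjectured bound and exposes the quantity each inductive step must reproduce.

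Next I would set up the counting identity. Fixing any vertex $v$, whose neighbors form a link cycle $C_v$ since $G$ is a triangulation, every Hamiltonian cycle of $G$ uses exactly one pair of spokes $va,vb$, so
\begin{equation*}
\#\{\text{Hamiltonian cycles of }G\}=\sum_{\{a,b\}\subseteq N(v)}h_{G-v}(a,b),
\end{equation*}
where $h_{G-v}(a,b)$ counts Hamiltonian paths of $G-v$ joining $a$ and $b$. The graph $G-v$ is a near-triangulation of a disk bounded by $C_v$, and this is exactly where the Tutte-path/Tutte-cycle machinery behind Thomassen's theorem applies to produce many internally disjoint routings. For the induction I would instead select a vertex $v$ of degree $4$ (which exists whenever $\delta(G)=4$; the case $\delta(G)=5$ I handle by a separate edge-contraction reduction), delete it, and re-triangulate its quadrilateral link by a diagonal, choosing $v$ and the diagonal — via a local analysis of separating triangles — so that the result $G'$ is again a $4$-connected planar triangulation on $n-1$ vertices. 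Since $2(n-2)(n-4)-2(n-3)(n-5)=4n-14$, the step reduces to showing that the Hamiltonian cycles through $v$ \emph{not} matched by cycles of $G'$ number at least $4n-14$, a bound I would extract from the six spoke-pairs at $v$ combined with the linear supply of Hamiltonian paths furnished by a quantitative Hamiltonian-connectivity lemma.

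The equality case requires a stability argument. I would show that whenever $G$ is not a double wheel the count is strictly slack: either some vertex link is long enough that the quantitative lemma overshoots $4n-14$, or the re-triangulating diagonal admits two essentially different routings, each producing a distinct family of cycles. Concretely, equality must force every reduction to be tight, and this tightness, propagated down to the base case (the octahedron $DW_6$, with $16=2\cdot 4\cdot 2$ cycles), pins the structure at each level to two apices joined to a common rim, i.e. to $DW_n$.

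The hard part, and the reason the statement remains a conjecture, is matching the \emph{exact} constant and the equality characterization simultaneously. The Tutte-cycle method readily yields linearly many Hamiltonian cycles through linearly many edges, hence the $\Omega(n^2)$ count established earlier, but its bookkeeping discards constant factors and lower-order terms; squeezing it to the precise threshold $2(n-2)(n-4)$ demands an optimal per-vertex count with no slack, and the stability analysis must exclude every near-extremal triangulation other than the double wheel. Controlling these error terms tightly — rather than merely up to constants — is the central obstacle.
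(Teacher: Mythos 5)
Your proposal does not prove the statement, and you acknowledge this yourself in the final paragraph; more importantly, there is no paper proof to compare it against, because the statement is the Hakimi--Schmeichel--Thomassen conjecture, which the paper cites as open and settles only asymptotically: Theorem~\ref{main1} gives $cn^2$ Hamiltonian cycles with $c=(12\times 90\times 541\times 301)^{-2}/2$, far below the conjectured $2(n-2)(n-4)$, and with no equality characterization. The one complete ingredient in your write-up is correct and worth keeping: the double wheel on $n$ vertices, with rim length $m=n-2$, has exactly $4\bigl(\binom{m}{2}-m\bigr)+2m=2(n-2)(n-4)$ Hamiltonian cycles, so the conjectured bound is attained.

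Beyond that, the plan has gaps that are not mere bookkeeping. First, the induction is structurally broken as stated: after deleting a degree-$4$ vertex $v$ and re-triangulating its quadrilateral link with a diagonal to form $G'$, only those Hamiltonian cycles of $G'$ that use the diagonal (or a link edge of $v$) lift to Hamiltonian cycles of $G$; a cycle of $G'$ avoiding the diagonal is a Hamiltonian cycle of $G-v$ and need not yield any cycle of $G$ at all. The inductive hypothesis bounds the total number of cycles of $G'$, not the number through a prescribed edge, so the identity $2(n-2)(n-4)-2(n-3)(n-5)=4n-14$ does not reduce the step to finding $4n-14$ additional cycles; you would need the stronger inductive statement ``at least $X(n)$ Hamiltonian cycles through any given edge,'' which is precisely what is unavailable. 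Second, the ``quantitative Hamiltonian-connectivity lemma'' you invoke does not exist at the required strength: the per-edge counts obtainable from Tutte-path/Tutte-cycle arguments are of the form $c_1 n$ with $c_1$ astronomically small (the paper's Lemma~\ref{2edge} has $c_1=(12\times 63\times 541\times 301)^{-1}$), nowhere near the coefficient $4$ your step requires. Third, the auxiliary claims are unsupported: the existence of a degree-$4$ vertex fails when $\delta(G)=5$ (deferred to an unspecified contraction), the re-triangulated $G'$ may contain separating triangles (your ``local analysis'' is not supplied), and the stability argument for the equality case is entirely aspirational. These are exactly the obstacles that keep the statement a conjecture rather than a theorem.
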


It was not until recently that Brinkmann, Souffriau, and Van Cleemput \cite{BSC2018} gave the first linear lower bound of $\frac{12}{5}(n-2)$ for $n$-vertex $4$-connected planar triangulations. Subsequently, Brinkmann and Van Cleemput \cite{BC2021} proved  linear lower bounds for 4-connected plane graphs and plane graphs with at most one 3-cut and sufficiently many edges.
 Since then, there has been more progress on this problem. Lo \cite{Lo2020} showed that every $4$-connected $n$-vertex planar triangulation with $O(\log n)$ separating $4$-cycles has $\Omega((n/\log n)^2)$ Hamiltonian cycles. The first and third author \cite{Liu-Yu2021} further showed that every $n$-vertex $4$-connected planar triangulation with $O(n/\log_2 n)$ separating $4$-cycles has $\Omega(n^2)$ Hamiltonian cycles. Very recently, Lo and Qian \cite{LQ2021} showed that every $n$-vertex $4$-connected planar triangulation with $O(n)$ separating $4$-cycles has $2^{\Omega(n)}$ Hamiltonian cycles.

Thus Conjecture~\ref{conj:HST} holds for large graphs with $O(n)$ separating $4$-cycles. In this paper, we remove the assumption on separating $4$-cycles and settle Conjecture~\ref{conj:HST} asymptotically.

\begin{theo}\label{main1}
If $G$ is a $4$-connected planar triangulation on $n$ vertices, then $G$ has at least $c n^2$ Hamiltonian cycles, where $c= (12\times 90\times 541\times 301)^{-2}/2$.
\end{theo}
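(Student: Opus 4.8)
The plan is to combine the Tutte-path machinery underlying Tutte's and Thomassen's theorems with a dichotomy on the number of separating $4$-cycles in $G$. If $G$ has only $O(n/\log_2 n)$ separating $4$-cycles, then the result of the first and third author \cite{Liu-Yu2021} already yields $\Omega(n^2)$ Hamiltonian cycles, so the entire difficulty lies in the case where separating $4$-cycles are abundant. My first step is therefore to isolate the quantitative core I need from Tutte theory: in a $4$-connected planar triangulation one can find not merely a single Hamiltonian cycle but a \emph{system} of Tutte paths joining prescribed pairs of vertices, where each path meets every bridge in a bounded number of attachment vertices. Because $G$ is $4$-connected and edge-maximal, these bridges are forced to be small, so each Tutte path extends to a Hamiltonian path, and the local choices available inside the bridges translate into genuinely distinct Hamiltonian paths.

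Next I would exploit the structure of the separating $4$-cycles, which in a $4$-connected triangulation form a laminar-like family (any two are nested, disjoint, or share a bounded interface), each enclosing a proper nonempty inside. When there are many of them, I would extract one of two usable configurations: either a single \emph{balanced} separating $4$-cycle $C$ that splits $G$ into an inside $G_1$ and an outside $G_2$ each carrying $\Omega(n)$ vertices, or a long \emph{nested chain} $C_1, C_2, \dots, C_m$ of separating $4$-cycles with $m=\Omega(n)$. In both cases the goal is to exhibit two independent linear-sized degrees of freedom for routing a Hamiltonian cycle, mirroring how the extremal double wheel attains $2(n-2)(n-4)$: there the two hubs force the rim into two arcs whose two breakpoints range independently over $\Theta(n)$ positions.

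For a balanced $C=v_1v_2v_3v_4$, a Hamiltonian cycle of $G$ restricts to a linear forest in each of $G_1$ and $G_2$ with endpoints among $v_1,\dots,v_4$, and conversely compatible path-systems on the two sides glue into a single Hamiltonian cycle. Using the Tutte-path count on each side, I would produce $\Omega(n)$ admissible routings inside $G_1$ and, independently, $\Omega(n)$ inside $G_2$; once the endpoint pattern on $C$ is fixed the gluing at the four interface vertices is essentially free, so the product gives $\Omega(n^2)$ Hamiltonian cycles, with a factor $1/2$ absorbing the symmetry between the two sides. This matches the shape $(12\times 90\times 541\times 301)^{-2}/2$ of the stated constant, read as one half of the product of two linear families each of size at least $n/(12\cdot 90\cdot 541\cdot 301)$. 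In the nested-chain case I would instead use the $\Omega(n)$ annular regions between consecutive $C_i$, pairing up far-apart, non-interfering annuli so that two independent reroutes can be chosen at once, again producing a product of two linear families rather than a single one.

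The hardest part will be the interface at the separating $4$-cycles: I must guarantee that the path-systems chosen on the two sides are simultaneously \emph{compatible} (joining into one cycle rather than several), that the two families of choices are truly \emph{independent} so that the counts multiply rather than merely add, and that distinct pairs of choices yield distinct Hamiltonian cycles with no over-counting. This is delicate precisely because separating $4$-cycles can be deeply nested, so the two directions of modification may not be cleanly separated by a single cut; reconciling the recursive nested structure with the demand for two globally independent linear families is where the bulk of the technical work --- and the accumulation of the constants $12,90,541,301$ through successive reductions --- will go.
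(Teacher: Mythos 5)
The reduction of the problem to the case of many separating $4$-cycles via \cite{Liu-Yu2021} is legitimate, but your plan founders at the step you treat as routine: producing $\Omega(n)$ distinct routings on each side of a separating $4$-cycle. Tutte-path theorems (Lemmas~\ref{tuttepath} and~\ref{2vts}) are pure existence statements, and in a $4$-connected planar triangulation a Tutte cycle is Hamiltonian \emph{precisely because} every bridge is trivial (a nontrivial bridge with at most three attachments would yield a cut of size at most three). So there are no ``local choices available inside the bridges'' to convert into distinct Hamiltonian paths; that multiplicity is fictitious. Worse, the per-side count you need can genuinely fail: if $C$ is a separating $4$-cycle and $\overline{C}$ minus two opposite boundary vertices is a path --- which is exactly what happens in the double wheel, your own motivating example --- then by Lemmas~\ref{uw-path} and~\ref{uv-path} there is exactly \emph{one} Hamiltonian path between the prescribed ends for that endpoint pattern, not $\Omega(n)$ of them. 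Producing linearly many Hamiltonian cycles through two prescribed cofacial edges is the hard technical core of the whole problem; the paper devotes Lemma~\ref{2edge} to it, proved by induction on $n$ together with the edge-deletion machinery of Alahmadi--Aldred--Thomassen and Lo (the links $A_u$, independent sets of low-degree vertices saturating no $4$-cycle, $5$-cycle or diamond-$6$-cycle, and the connectivity-preservation Lemma~\ref{4conn}), none of which appears in your outline. The same degeneracy defeats your nested-chain branch: an annulus between consecutive nested $4$-cycles may admit a unique path system, so ``two independent reroutes'' need not exist there either.

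Your structural dichotomy is also incomplete. Abundant separating $4$-cycles force neither a balanced one nor an $\Omega(n)$ nested chain: starting from a $5$-connected triangulation, replace $\Theta(n)$ pairwise far-apart edges by an octahedral gadget (a $4$-cycle with two adjacent interior vertices); the result is a $4$-connected triangulation with $\Theta(n)$ separating $4$-cycles, all with two-vertex interiors, pairwise disjoint and not nested, so your case analysis misses it. (This third configuration can be handled, since disjoint interiors each contribute an independent factor of $2$ via Lemma~\ref{lem:2Hamiltonpaths}, but that case must be stated and proved.) Nor is laminarity of separating $4$-cycles free; the paper establishes the restricted-intersection property only for $4$-cycles through a saturation-free independent set (Lemma~\ref{pst:2-4cycles}). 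Finally, your reading of the constant is a coincidence: in the paper the quadratic count does not arise as a product of two linear families across an interface, but from induction on $n$ (contracting an adjacent pair of degree-$4$ vertices, or the interior of a suitable separating $4$-cycle) together with a triangular sum --- from an independent set $S=\{u_1,\dots,u_k\}$ with $k\geq c_2 n$ one builds, for each $j$, a family $\mathcal{C}_j$ of $k+1-j$ Hamiltonian cycles each containing the deleted link edge $f_j$ but avoiding $f_1,\dots,f_{j-1}$, so all these cycles are pairwise distinct and $\sum_{j=1}^{k}(k+1-j)=k(k+1)/2> c_2^{2}n^{2}/2$; the shape $(\,\cdot\,)^{-2}/2$ of the constant is $c_2^{2}/2$ coming from this sum, not one half of a two-sided product.
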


The number of Hamiltonian cycles in a planar triangulation $G$ can be significantly larger if one increases the connectivity or the minimum degree of $G$. Alahmadi, Aldred, and Thomassen \cite{AAT2020} showed that every $5$-connected $n$-vertex planar triangulation has $2^{\Omega (n)}$ Hamiltonian cycles, improving the earlier bound $2^{\Omega(n^{1/4})}$ by B\"{o}hme, Harant, and Tk{\' a}{\v c} \cite{BHT1999}. Note that the more recent result of Lo and Qian \cite{LQ2021} is stronger, but the technique used in \cite{AAT2020} played an important role in \cite{LQ2021}. The first and third author \cite{Liu-Yu2021} weakened the assumption in the  B\"{o}hme-Harant-Tk{\' a}{\v c} result by replacing the $5$-connectedness condition with ``minimum degree at least $5$''. In this paper, we observe that the relative locations of degree $4$ vertices play an essential role for $4$-connected planar triangulations to have exponentially many Hamiltonian cycles.

\begin{theo}\label{main2}
There exists a constant $c>0$ such that 
for any $4$-connected planar triangulation $G$ on $n$ vertices in which the distance between any two vertices of degree $4$ is at least three, $G$ has at least $2^{c n^{1/4}}$ Hamiltonian cycles.
\end{theo}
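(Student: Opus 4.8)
The plan is to fix one Hamiltonian cycle and then count the Hamiltonian cycles obtainable from it by mutually independent local reroutings, each contributing a factor of $2$; to reach $2^{cn^{1/4}}$ it suffices to produce $\Omega(n^{1/4})$ vertex-disjoint such reroutings. This is precisely the engine behind the minimum-degree-$5$ bound of \cite{Liu-Yu2021} (itself descended from \cite{BHT1999}), and the goal is to run that engine in the present setting, where the only obstruction is the presence of degree-$4$ vertices. Concretely, I would first invoke Tutte's theorem \cite{Tutte1956} to obtain a Hamiltonian cycle $C_0$ (chosen, as in the Tutte-cycle arguments, to be compatible with many local switches), and then locate many vertex-disjoint \emph{switching gadgets} along $C_0$ that steer clear of the degree-$4$ vertices.

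The basic gadget I would use is the \emph{diamond switch}. Because $G$ is $4$-connected it has no separating triangle, so every edge $xy$ lies in exactly two triangles and hence has exactly two common neighbours $p,q$; the four vertices $p,x,y,q$ with edges $xy,xp,yp,xq,yq$ form a diamond. If $C_0$ traverses such a diamond as the subpath $p-x-y-q$, then deleting $px,yq$ and inserting $py,xq$ (all edges of $G$) produces a second Hamiltonian cycle, since $x$ and $y$ are merely transposed along the path while all other degrees are unchanged. Vertex-disjoint diamonds traversed this way yield independent binary choices, hence $2^{k}$ distinct Hamiltonian cycles from $k$ of them. At a degree-$4$ vertex the local structure is too rigid to supply such switches reliably, which is exactly why these vertices must be avoided. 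Here the hypothesis enters decisively: since any two degree-$4$ vertices are at distance at least $3$, their closed neighbourhoods are pairwise disjoint, so each degree-$4$ vertex disturbs only a bounded local region and the remainder of $G$ behaves like a triangulation of minimum degree $5$. In particular every neighbour of a degree-$4$ vertex has degree at least $5$.

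The heart of the proof, and the step I expect to be hardest, is to guarantee $\Omega(n^{1/4})$ vertex-disjoint switchable gadgets lying entirely in the degree-$\ge 5$ part of $G$. I would follow the counting scheme of \cite{Liu-Yu2021}: extract a large well-separated family of candidate gadgets, where the separation (pairwise distance bounded below by a constant) is what makes the corresponding switches simultaneously valid and independent, and where the planar minimum-degree-$5$ structure is what forces the family to have size $\Omega(n^{1/4})$. The new ingredient is to delete from the candidate pool every gadget meeting the closed neighbourhood of a degree-$4$ vertex; by the distance-$3$ hypothesis these closed neighbourhoods are disjoint and of bounded size, so at most a constant number of candidates is lost per degree-$4$ vertex. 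This diminishes the family by at most a constant factor and therefore preserves the $\Omega(n^{1/4})$ lower bound.

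The main obstacle is thus twofold. First, one must reconcile the choice of $C_0$ with the chosen gadgets so that each gadget is actually traversed in the switchable fashion $p-x-y-q$; this is the delicate part, and it is where the Tutte-cycle technology, rather than bare Hamiltonicity, is required. Second, one must bound the loss incurred by dodging degree-$4$ vertices, which the distance-$3$ condition is tailored to control. Once $\Omega(n^{1/4})$ independent switches are in hand, taking all $2^{\Omega(n^{1/4})}$ combinations and observing that distinct choices give distinct Hamiltonian cycles (immediate from vertex-disjointness of the gadgets) completes the proof.
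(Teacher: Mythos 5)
There is a genuine gap, and it sits exactly where you flag ``the delicate part'': you never supply a mechanism for producing a single Hamiltonian cycle that traverses $\Omega(n^{1/4})$ prescribed vertex-disjoint diamonds in the switchable orientation $p\hbox{-}x\hbox{-}y\hbox{-}q$. Tutte-path/Tutte-cycle technology (Lemmas~\ref{tuttepath} and \ref{2vts}) controls only a bounded number of prescribed edges and vertices, so it cannot force a Hamiltonian cycle through an unbounded collection of gadgets; no known strengthening does this, and the paper does not attempt it. You also mischaracterize the engine of \cite{Liu-Yu2021} and \cite{AAT2020}: those arguments do not fix one cycle and perform local switches, but rather delete an edge set $F$ meeting the link $A_u$ of each vertex $u$ in a carefully chosen independent set, show $G-F$ remains $4$-connected (Lemma~\ref{4conn}), and extract exponentially many cycles by double counting over the choices of $F$. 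Finally, your exponent $n^{1/4}$ is unexplained: in a minimum-degree-$5$ planar triangulation one would expect linearly many disjoint diamonds, so nothing in your scheme naturally produces $n^{1/4}$; the exponent in the theorem comes from a trade-off you do not engage with.

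That trade-off is the real content of the paper's proof, and it is driven by separating $4$-cycles, which your proposal never mentions and which are the essential obstruction distinguishing the $4$-connected case from the $5$-connected one. The paper runs Lemma~\ref{2cases} with $t\approx n^{1/4}$ to split into cases: either (a) there is an independent set $S$ of size $\Omega(n^{3/4})$ saturating no $4$-cycle, $5$-cycle, or diamond-$6$-cycle, in which case the edge-removal counting gives $(5/4)^{\Omega(n^{3/4})}$ cycles or else most vertices of $S$ lie on separating $4$-cycles; or (b) two vertices have many common neighbours, forcing many separating $4$-cycles. The resulting family of maximal separating $4$-cycles is then either a large antichain of size $\Omega(n^{1/4})$ under containment --- where each interior contributes a free factor of $2$ because \emph{any} Hamiltonian cycle of the contracted graph enters and leaves the contraction vertex through two boundary vertices, and Lemma~\ref{lem:2Hamiltonpaths} supplies two Hamiltonian paths between them (no forcing of the cycle is needed, which is the trick your approach lacks) --- or a nested chain of length $\Omega(n^{1/2})$, handled by the delicate Lemmas~\ref{nest4cycles} and \ref{nest4cycles_1} to yield $2^{\Omega(n^{1/4})}$ cycles. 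The balance $n^{1/4}\cdot n^{1/2}=n^{3/4}$ is precisely where the exponent $n^{1/4}$ comes from. Without an argument handling separating $4$-cycles and without a way to realize your switches on a common cycle, the proposal does not constitute a proof.
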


In Section $2$, we discuss an idea similar to the key idea in \cite{AAT2020} for finding an edge set $F$ in a $4$-connected planar triangulation $G$ such that removing $F$ from $G$ still gives a $4$-connected graph. We collect several results on the number of Hamiltonian paths between two given vertices in planar graphs. We also cite some known results on ``Tutte paths'' and ``Tutte cycles" in planar graphs. Such results will be used to find a Hamiltonian cycle through specific edges in a planar graph. 

In Section $3$, we prove Theorem~\ref{main1}. We first show that every  $n$-vertex  $4$-connected planar triangulation $G$ has $\Omega(n)$ Hamiltonian cycles through two specified edges in any given triangle. Moreover, if $G$ does not contain two adjacent vertices of degree $4$, then $G$ has $\Omega({n^2})$ Hamiltonian cycles.
We then use these results and apply induction on $n$ to complete the proof of Theorem \ref{main1}.

In Section $4$, we consider $4$-connected planar triangulations $G$ in which any two vertices of degree $4$ have distance at least three.  We show that either $G$ has a large independent set with nice properties, or $G$ has many separating $4$-cycles with pairwise disjoint interiors, or $G$ has many ``nested" separating $4$-cycles. In all cases, we can find the desired number of Hamiltonian cycles in $G$.  
\medskip 

We conclude this section with some terminology and notation. For any positive integer $k$, let $[k]=\{1,2, \ldots, k\}$.

Let $G$ and $H$ be graphs. We use $G\cup H$ and $G\cap H$ to denote the union and intersection of $G$ and $H$, respectively. For any $S\subseteq V(G)$, we use $G[S]$ to denote the subgraph of $G$ induced by $S$, and let $G-S = G[V(G)\backslash S]$. A set $S\subseteq V(G)$ is a {\it cut} in $G$ if $G-S$ has more components than $G$, and if $|S|=k$ then $S$ is a cut of {\it size $k$} or {\it $k$-cut} for short.  For a subgraph $T$ of $G$, we often write $G-T$ for $G-V(T)$ and write $G[T]$ for $G[V(T)]$. 
A path (respectively, cycle) is often represented as a sequence (respectively, cyclic sequence) of vertices, with consecutive vertices 
being adjacent.  Given a path $P$ and distinct vertices $x,y\in V(P)$, we use $xPy$ to denote the subpath of $P$ between $x$ and $y$.

Let $G$ be a graph. For $v\in V(G)$, we use $N_G(v)$ (respectively, $N_G[v]$) to denote the neighborhood (respectively, closed neighborhood) of $v$, and use $d_G(v)$ to denote $|N_G(v)|$. For distinct vertices $u,v$ of $G$, we use $d_G(u,v)$ to denote the distance between $u$ and $v$, and if $u$ and $v$ are adjacent in $G$, we use $uv$ to denote the edge of $G$ between $u$ and $v$. (If there is no confusion we omit the reference to $G$.)
If $H$ is a subgraph of $G$, we write  $H \subseteq G$. For any set $R$ consisting of vertices of $G$ and $2$-element subsets of $V(G)$, we use $H+R$ (respectively, $H-R$) to denote the graph with vertex set $V(H)\cup (R\cap V(G))$ (respectively, $V(H)\backslash (R\cap V(G))$) and edge set $E(H)\cup (R\backslash V(G))$ (respectively, $E(H)\backslash (R\backslash V(G))$). If $R=\{\{x,y\}\}$ (respectively, $R=\{v\}$), we write $H+xy$ (respectively, $H+v$)  instead of $H+R$, and write $H-xy$ (respectively, $H-v$) instead of $H-R$.

Let $G$ be a plane graph. Two elements of $V(G)\cup E(G)$ are {\it cofacial} if they are incident with a common face of $G$. The {\it outer walk} of $G$ consists of vertices and edges of $G$ incident with the infinite face of $G$. If the outer walk is a cycle in $G$, we call it  {\it outer cycle} instead.  If all vertices of $G$ are incident with its infinite face, then we say that $G$ is an {\it outer planar} graph. 
For a cycle $C$ in  $G$, we use $\overline{C}$ to denote the subgraph of $G$ consisting of all vertices and edges of $G$ contained in the closed disc in the plane bounded by $C$. The {\it interior} of $C$ is then defined as the subgraph $\overline{C}-C$. For any distinct vertices $u,v\in V(C)$, we use $uCv$ to denote the subpath of $C$ from $u$ to $v$ in clockwise order.

\section{Preliminaries}

In this section, we state and prove a number of lemmas needed for the proofs of Theorems~\ref{main1} and \ref{main2}.

A \textit{near triangulation} is a plane graph in which all faces except possibly its infinite face are bounded by triangles. The first and third author \cite{Liu-Yu2021} considered the number of Hamiltonian paths between two given vertices in the outer cycle of a near triangulation.

\begin{lem}[Liu and Yu \cite{Liu-Yu2021}]\label{uw-path}
Let $G$ be a near triangulation with outer cycle $C:=uvwxu$ and assume that $G\ne C+vx$ and  $G$ has no separating triangles. Then one of the following holds:
\begin{itemize}
\item[\textup{(i)}] $G-\{v, x\}$ has at least two Hamiltonian paths between $u$ and $w$.
\item[\textup{(ii)}] $G-\{v,x\}$ is a path between  $u$ and $w$ and, hence, $G-\{v,x\}$ is outer planar.
\end{itemize}
\end{lem}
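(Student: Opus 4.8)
The plan is to argue by induction on $|V(G)|$, first reducing to the case where the outer $4$-cycle $C$ has no chord. Since $G\ne C+vx$, if $vx\in E(G)$ then the chord $vx$ would split $G$ into the near triangulations bounded by the triangles $uvx$ and $vwx$; as $G$ has no separating triangle, each of these triangles must be a face, forcing $G=C+vx$, a contradiction. Similarly, if $uw\in E(G)$ then $uvw$ and $uwx$ must both be faces, so $G=C+uw$; in that case $G-\{v,x\}$ is the single edge $uw$, a $u$–$w$ path, and (ii) holds. Hence I may assume $uw,vx\notin E(G)$, so $u,v,w,x$ all have degree at least $3$ and $G$ has at least one interior vertex. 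In particular the neighbourhoods $N(v)$ and $N(x)$ are $u$–$w$ paths $P_v$ and $P_x$ (the links of $v$ and $x$), each with at least one interior vertex.

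Next I would analyse the structure of $H:=G-\{v,x\}$. Its outer boundary is traced by $P_v$ and $P_x$, which meet exactly at the common neighbours of $v$ and $x$. Writing these common neighbours in order as $u=t_0,t_1,\dots,t_p,t_{p+1}=w$, the graph $H$ is a chain of blocks $H_0,\dots,H_p$ glued consecutively at the cut vertices $t_1,\dots,t_p$, where $H_i$ is the subgraph enclosed by the two sub-arcs of $P_v$ and $P_x$ between $t_i$ and $t_{i+1}$. Each block is either (a) a single edge $t_it_{i+1}$ (when that edge carries the faces $vt_it_{i+1}$ and $xt_it_{i+1}$ on its two sides), or (b) a $2$-connected near triangulation on at least three vertices whose outer cycle is split by its two \emph{poles} $t_i,t_{i+1}$ into one arc of $P_v$ and one of $P_x$. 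Since the disc bounded by any triangle of $H$ lies inside the outer cycle of $H$ and hence avoids $v$ and $x$, the hypothesis that $G$ has no separating triangle passes to each block. Because the $t_i$ are cut vertices and the block-cut tree of $H$ is a path, every Hamiltonian $u$–$w$ path of $H$ restricts to a Hamiltonian pole-to-pole path in each block, and conversely these concatenate; hence the number of Hamiltonian $u$–$w$ paths of $H$ equals the product over $i$ of the number of Hamiltonian $t_i$–$t_{i+1}$ paths of $H_i$. A single-edge block contributes exactly one factor, and $H$ is itself a $u$–$w$ path precisely when every block is a single edge, which is conclusion (ii).

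It therefore remains to prove the crux: a block of type (b) has at least two Hamiltonian paths between its poles. Granting this, if all blocks are single edges we land in (ii), and otherwise the product is at least $2$ and (i) holds. For the crux I would induct on the size of the block. Existence of one Hamiltonian pole-to-pole path follows from the Hamiltonicity of near triangulations without separating triangles. To produce a \emph{second} path, I would take a first path $P$, select a bounded face of the block meeting $P$ so as to expose an edge $e\notin P$, and reroute $P$ across $e$; planarity together with the no-separating-triangle condition is what guarantees such a reroute exists and still spans all vertices. The role of the no-separating-triangle hypothesis here is exactly to exclude the rigid configurations in which the pole-to-pole path is unique: for instance, a block that is a triangle $t_i\alpha t_{i+1}$ with the pole edge present has a unique Hamiltonian pole-path, but this forces a separating triangle in $G$ through $\alpha$ and $v$ (or $x$), so it cannot occur.

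The main obstacle is this last step, the lower bound of two inside a single block. The difficulty is twofold: one must correctly handle the ``pinching'' of $H$ at the common neighbours of $v$ and $x$ (equivalently, the separating $4$-cycles through $v$ and $x$), which is what produces the block decomposition, and one must upgrade the qualitative Hamiltonicity of separating-triangle-free near triangulations to a \emph{second, distinct} Hamiltonian path between two prescribed boundary vertices. I expect the clean statement to be an auxiliary lemma proved by an induction that deletes the interior neighbour of a pole across a boundary face, using the absence of separating triangles both to keep the reduced graph a valid near triangulation and to supply the branching that yields the extra path.
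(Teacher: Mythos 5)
Note first that this paper does not actually prove Lemma~\ref{uw-path}; it quotes it from \cite{Liu-Yu2021}, so your argument has to stand entirely on its own. The reductive part of your proposal is sound: ruling out the chords $vx$ and $uw$, observing that the links $P_v,P_x$ of $v$ and $x$ bound $H:=G-\{v,x\}$, that the common neighbours $u=t_0,t_1,\dots,t_p,t_{p+1}=w$ of $v$ and $x$ are exactly the cut vertices of $H$, that $H$ is a chain of blocks $H_0,\dots,H_p$ each of which is an edge or a $2$-connected near triangulation, and that the number of Hamiltonian $u$--$w$ paths of $H$ is the product over the blocks of the numbers of Hamiltonian pole-to-pole paths. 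All of that is correct and would be a reasonable skeleton.

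The gap is precisely the step you yourself flag as the crux: that every block which is not a single edge has at least \emph{two} Hamiltonian pole-to-pole paths. That statement is the entire content of the lemma, and your proposal does not prove it. Worse, in the form you state it --- as a property of a block that is a $2$-connected near triangulation with no separating triangles --- it is \emph{false}: take the $4$-cycle $satbs$ with the chord $st$ (that is, $K_4$ minus the edge $ab$), with poles $s$ and $t$. This is a $2$-connected near triangulation in which every triangle bounds a face, yet it has \emph{no} Hamiltonian $s$--$t$ path whatsoever, since any such path would require the edge $ab$. So the inductive hypothesis for your auxiliary lemma cannot be ``no separating triangles in the block''; it must remember how the block sits inside $G$ relative to $v$ and $x$ (for instance, that a chord of the block joining two vertices of the same arc, or incident with a pole, or joining the two poles, would create a separating triangle of $G$ together with $v$ or $x$ --- which is exactly what kills the configuration above). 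You verify this contextual phenomenon only for the single smallest configuration (the triangle block). Moreover, the mechanism you propose for producing the second path --- take a first path $P$, expose an edge $e\notin E(P)$ on a bounded face, and ``reroute $P$ across $e$'' --- is not a valid operation: splicing an extra edge into a Hamiltonian path generically creates a vertex of degree $3$ on the path or omits vertices, and you give no condition under which it succeeds. Even the existence of \emph{one} Hamiltonian pole-to-pole path is only asserted, by appeal to Hamiltonicity of separating-triangle-free near triangulations; that gives at best a Hamiltonian cycle, not a Hamiltonian path between two prescribed boundary vertices, and it too requires a Tutte-path argument (in the spirit of Lemma~\ref{tuttepath}) combined with the contextual no-separating-triangle conditions to kill bridges. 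In short: what you have is a correct decomposition plus an unproven --- and, as formulated, false --- key lemma.
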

\begin{lem}[Liu and Yu \cite{Liu-Yu2021}] \label{uv-path}
Let $G$ be a near triangulation with outer cycle $C: =uvwxu$ and assume that $G$ has no separating triangles. Then one of the following holds:
\begin{itemize}
    \item [\textup{(i)}]  $G-\{w,x\}$ has at least two Hamiltonian paths between $u$ and $v$.
    \item [\textup{(ii)}]  $G-\{w,x\}$ is an outer planar near triangulation. 
\end{itemize}
\end{lem}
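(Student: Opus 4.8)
The plan is to study $H := G-\{w,x\}$ directly and reduce the statement to a clean claim about near triangulations with a distinguished boundary edge. First I would pin down the structure of $H$. Write the interior neighbours of $w$, in the fan from $v$ to $x$, as $v, b_1,\dots,b_{s-1}, z$, and the interior neighbours of $x$, in the fan from $w$ to $u$, as $z, a_{t-1},\dots,a_1, u$, where $z$ is the apex of the unique inner face on the boundary edge $wx$, so the two fans meet exactly at $z$ (the no-separating-triangle hypothesis forces $z$ to be the only common interior neighbour of $w$ and $x$). Deleting $w$ and $x$ exposes precisely these fan vertices, so $H$ is again a near triangulation whose outer cycle is $\Gamma: u, v, b_1,\dots,b_{s-1}, z, a_{t-1},\dots,a_1, u$, and in particular $uv\in E(\Gamma)$. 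I would also record that $H$ inherits the no-separating-triangle hypothesis: any triangle $T$ of $H$ is a triangle of $G$ avoiding $w,x$, and were $T$ separating in $H$, then a vertex inside $T$ would be separated in $G$ from $w,x$ (which lie outside $T$), making $T$ separating in $G$ — a contradiction. The degenerate possibilities, namely that $uw$ or $vx$ is a chord, force $G=C+uw$ or $G=C+vx$ by the no-separating-triangle hypothesis, and in both $H$ is just the edge $uv$, an outer planar near triangulation, so conclusion (ii) holds; I would dispose of these first.

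With $H$ understood, note that a Hamiltonian $u$--$v$ path of $H$ is exactly a Hamiltonian cycle of $H$ through $uv$ with the edge $uv$ deleted. If $H$ has no interior vertex, then all its vertices lie on $\Gamma$, so $H$ is an outer planar near triangulation and conclusion (ii) holds. Hence it suffices to prove the following \emph{Key Claim}: if $H$ is a near triangulation with outer cycle $\Gamma$, $uv\in E(\Gamma)$, no separating triangle, and at least one interior vertex, then $H$ has at least two Hamiltonian $u$--$v$ paths.

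I would prove the Key Claim by induction on $|V(H)|$, in the spirit of Whitney's proof that a near triangulation without separating triangles is Hamiltonian \cite{Whitney1931}. Let $uvp$ be the inner face on the edge $uv$. If $p$ lies on $\Gamma$, then $up$ or $vp$ is a chord splitting $H$ into two smaller near triangulations glued along it; the no-separating-triangle property is inherited by each piece, one piece is a triangulated polygon with a controlled number of boundary Hamiltonian paths, and the other carries an interior vertex and is handled by the inductive hypothesis (or, when that piece happens to be bounded by a $4$-cycle, by the sibling Lemma~\ref{uw-path}). If $p$ is interior, a standard Whitney-type reduction — splitting along a chord incident to $p$, or deleting a carefully chosen boundary vertex — again yields strictly smaller no-separating-triangle near triangulations to recurse on. In every case one assembles a Hamiltonian $u$--$v$ path of $H$ by concatenating Hamiltonian subpaths across the splitting vertices, and a \emph{second}, distinct path is obtained either from a second path already supplied by the inductive hypothesis on the non-trivial piece, or by a local reroute across the triangular face $uvp$ exploiting the freedom created by the interior vertex.

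The main obstacle I anticipate is exactly the bookkeeping for the \emph{second} path: showing that the two assembled paths are genuinely distinct, and that at least one sub-instance still contains an interior vertex so that the inductive hypothesis (rather than the weaker outer-planar conclusion, which may carry only one boundary Hamiltonian path) applies. Controlling the degenerate configurations — where $p$ is adjacent to $u$ or $v$, where the fans at $w$ and $x$ are short (small $s$ or $t$), or where $z$ is adjacent to both $u$ and $v$ — is where the no-separating-triangle hypothesis must be used most carefully to rule out a forced unique path and thereby secure the count of two.
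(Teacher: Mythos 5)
Your structural analysis of $H=G-\{w,x\}$ is correct (the two fans meeting at the unique common neighbour $z$, the disposal of the chord cases $uw$, $vx$), but the reduction that follows it is fatal: your \emph{Key Claim} --- that every near triangulation with no separating triangle, a boundary edge $uv$, and at least one interior vertex has at least two Hamiltonian $u$--$v$ paths --- is false. Take $V(H)=\{u,v,c_1,c_2,c_3,d_1,d_2,q\}$ with outer cycle $u\,v\,c_1\,d_1\,c_2\,d_2\,c_3\,u$ and interior edges $uc_1,\ uq,\ qc_1,\ qc_2,\ qc_3,\ c_1c_2,\ c_2c_3$. Its inner faces are the triangles $uvc_1$, $uc_1q$, $uqc_3$, $qc_1c_2$, $qc_2c_3$, $c_1d_1c_2$, $c_2d_2c_3$; one checks directly that every triangle of this graph is one of these faces, so there is no separating triangle, and $q$ is an interior vertex. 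Yet $H$ has exactly one Hamiltonian $u$--$v$ path: $v$ has degree $2$, so its path-edge must be $vc_1$ (otherwise the path is the single edge $uv$); the degree-$2$ vertices $d_1,d_2$ force the segments $c_1d_1c_2$ and $c_2d_2c_3$ onto the path, which uses up both path-edges at $c_1$ and at $c_2$; hence the two path-edges at $q$ can only be $qu$ and $qc_3$, and the path is forced to be $u\,q\,c_3\,d_2\,c_2\,d_1\,c_1\,v$. So no induction, however carefully bookkept, can prove your Key Claim, and the difficulty you flagged as ``the main obstacle'' (sub-pieces that are outer planar and carry only one Hamiltonian path) is not a bookkeeping issue but a genuine failure mode.

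What saves the actual lemma is precisely the structure your reduction throws away. When $H$ really is $G-\{w,x\}$ for a near triangulation $G$ with outer $4$-cycle $uvwxu$ and no separating triangles, the outer cycle of $H$ splits into the two fans $N(w)$ and $N(x)$ meeting at $z$, and \emph{every chord of the outer cycle of $H$ must join a vertex strictly inside the $w$-fan to a vertex strictly inside the $x$-fan} (a chord with both ends in one fan would create a separating triangle through $w$ or $x$ in $G$, and no chord can be incident with $z$). The counterexample above violates exactly this: the chords $c_1c_2$ and $c_2c_3$ would each need the splitting vertex $z$ strictly between their endpoints ($z=d_1$ and $z=d_2$ respectively), which is impossible, so that graph never arises as $G-\{w,x\}$. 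Any correct proof of Lemma~\ref{uv-path} (and of its sibling Lemma~\ref{uw-path}) must carry this fan/chord structure through the induction rather than reduce to a statement about arbitrary near triangulations without separating triangles; this is what the proofs in \cite{Liu-Yu2021} do, and it is the essential idea missing from your proposal.
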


We need an observation about degree $4$ vertices and the number of Hamiltonian paths in a near triangulation.
\begin{lem}\label{lem:2Hamiltonpaths}
Let $G$ be a near triangulation with outer cycle $C:=uvwxu$ and assume that $|V(G)|\geq 6$ and $G$ has no separating triangles. Suppose there exist distinct $a,b\in V(C)$ such that $G-(V(C)\backslash \{a,b\})$ has at most one Hamiltonian path between $a$ and $b$. Then $G$ has two adjacent vertices of degree $4$ that are contained in $V(G)\backslash V(C)$. 
\end{lem}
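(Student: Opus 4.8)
The plan is to split on the type of the pair $\{a,b\}\subseteq V(C)$; by relabeling I may assume either $\{a,b\}=\{u,w\}$ (a diagonal of $C$) or $\{a,b\}=\{u,v\}$ (an edge of $C$). Two facts forced by the hypotheses will be used in both cases: since $G$ has no separating triangle and $|V(G)|\ge 6$, every vertex of $V(G)\setminus V(C)$ has degree at least $4$ (a degree-$3$ interior vertex would be enclosed by a separating triangle), and at most one vertex of $V(G)\setminus V(C)$ is adjacent to both $w$ and $x$ (two such vertices would give two distinct triangles on the boundary edge $wx$, hence a separating triangle).

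In the diagonal case, $G-(V(C)\setminus\{u,w\})=G-\{v,x\}$ has at most one Hamiltonian $u$-$w$ path. As $|V(G)|\ge 6$ we have $G\ne C+vx$, so Lemma~\ref{uw-path} applies, and its "at least two paths" alternative is excluded; hence $G-\{v,x\}$ is a single path $P=(u=p_0,p_1,\dots,p_k=w)$. Because $G-\{v,x\}$ is exactly a path, no edge of $G$ joins two non-consecutive $p_i$, so in each of the two triangulated regions cut off by $P$ every diagonal must emanate from $v$, respectively from $x$. Thus $v$ and $x$ are each adjacent to all of $p_0,\dots,p_k$, and every internal vertex $p_i$ $(1\le i\le k-1)$ has neighbours exactly $p_{i-1},p_{i+1},v,x$, i.e.\ degree $4$. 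Since $|V(G)|\ge 6$ forces $k\ge 3$, the consecutive vertices $p_1,\dots,p_{k-1}\in V(G)\setminus V(C)$ are at least two adjacent degree-$4$ vertices, and any adjacent pair works.

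In the edge case, $G-(V(C)\setminus\{u,v\})=G-\{w,x\}$ has at most one Hamiltonian $u$-$v$ path, so Lemma~\ref{uv-path} forces $H:=G-\{w,x\}$ to be a triangulated polygon. Its outer cycle contains $u,v$ (adjacent, since the edge $uv$ stays on the outer face) together with all of $V(G)\setminus V(C)$, listed in cyclic order as $u,v,b_1,\dots,b_t$ with $t\ge 2$. Outerplanarity means each $b_i$ is adjacent to $w$ or $x$; combined with $d_G(b_i)\ge 4$, a vertex $b_i$ of degree $2$ in $H$ (an \emph{ear}) must be adjacent to both $w$ and $x$. By the "at most one" fact there is thus at most one interior ear, namely the unique vertex $z^{*}$ adjacent to both $w,x$, and $d_G(z^{*})=4$. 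A triangulated polygon has at least two ears, and $u,v$ cannot both be ears because both ear-triangles would contain the single boundary edge $uv$. Hence the two ears are exactly $z^{*}$ and one corner, the weak dual of $H$ is a path, and $z^{*}$ is a leaf of this "snake".

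To finish, write $z^{*}=b_i$ with ear-triangle $b_{i-1}b_ib_{i+1}$ and chord $b_{i-1}b_{i+1}$. The next triangle $\sigma$ of the snake shares that chord and, being internal, has a single boundary edge, which is $b_{i-1}b_{i-2}$ or $b_{i+1}b_{i+2}$; in either case the corresponding interior neighbour of $z^{*}$ (namely $b_{i-1}$ or $b_{i+1}$) lies only in the ear-triangle and in $\sigma$, so it has degree $3$ in $H$ and degree $4$ in $G$. With $z^{*}$ this yields two adjacent degree-$4$ vertices of $V(G)\setminus V(C)$; the degenerate subcases $t=2$ and $z^{*}$ adjacent to a corner are checked in the same way. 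I expect the main obstacle to be the edge case, and within it the two forcing facts about ears — that only a vertex adjacent to both $w$ and $x$ can be an interior ear, and that $u,v$ cannot both be ears — which pin $H$ down to a snake and make the endpoint analysis possible.
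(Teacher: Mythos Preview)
Your proof is correct and follows essentially the same approach as the paper's: both split on whether $\{a,b\}$ is a diagonal or an edge of $C$, invoke Lemmas~\ref{uw-path} and~\ref{uv-path} respectively, and in the edge case locate the unique interior common neighbour of $w$ and $x$ as a degree-$4$ vertex and then exhibit an adjacent interior degree-$4$ vertex. The only difference is cosmetic: the paper argues directly that every chord of $G-\{w,x\}$ joins the $w$-side arc to the $x$-side arc (immediately giving $d_H(u_s)=2$ and $d_H(u_{s\pm1})=3$), whereas you recover the same structure via ear-counting and the path shape of the weak dual.
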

\begin{proof}
By symmetry, we only need to consider two cases: $\{a,b\}=\{u,w\}$ or $\{a,b\}=\{u,v\}$. If $\{a,b\}=\{u,w\}$ and $G-(V(C)\backslash \{a,b\})=G-\{v,x\}$ has at most one Hamiltonian path between $u$ and $w$, then by Lemma~\ref{uw-path}, $G-\{v,x\}$ is a path. Hence, by planarity, all vertices in $V(G)\backslash V(C)$ have degree $4$ in $G$; so the assertion holds as $|V(G)\backslash V(C)|\geq 2$. 

Now suppose $\{a,b\}=\{u,v\}$ and there exists at most one Hamiltonian path between $u$ and $v$ in $G-(V(C)\backslash \{a,b\})=G-\{w,x\}$. Then by Lemma~\ref{uv-path}, $G-\{w,x\}$ is an outer planar near triangulation. Let $D=u_1 u_2 \ldots u_tu_1$ denote the outer cycle of $G-\{w,x\}$ such that $u_1=u$ and $u_t=v$. Note that $t\geq 4$ and that $u_i$ is adjacent to $w$ or $x$ for every $i\in [t]$.  Let $u_{s}$, where $s\in [t]$, be the common neighbor of $w$ and $x$ in $V(D)$. (The existence of $u_s$ is guaranteed by the fact that $G$ is a near triangulation with outer cycle $uvwxu$.) Since $G$ has no separating triangles, $2\leq s \leq t-1$ and every edge of $(G-\{w,x\})-E(D)$ is incident with both paths $u_1\ldots u_{s-1}$ and $u_{s+1}\ldots u_t$. It follows that $d_G(u_{s})=4$. Moreover, $s\geq 3$ and $d_G(u_{s-1})=4$, or $s\leq t-2$ and $d_G(u_{s+1})=4$, as $|V(G)\backslash V(C)| \geq 2$ and $G$ is a near triangulation. This completes the proof of the lemma.
\end{proof}

By Lemma~\ref{lem:2Hamiltonpaths}, it is natural to expect that $4$-connected planar triangulations without too many vertices of degree $4$ should have many Hamiltonian cycles.
We now prove a technical lemma, which will be used in the proof of Lemma \ref{nest4cycles} to produce two Hamiltonian paths in a near triangulation.
\begin{lem}\label{lem:r-yHamiltonian}
Let $G$ be a near triangulation with outer cycle $C$, and let $x_1,w_1,w_2, x_2\in V(C)$ be distinct and occur on $C$ in clockwise order such that $x_1x_2, w_1w_2 \in E(C)$ and each edge of $G-E(C)$ is incident with both $x_1Cw_1$ and  $w_2Cx_2$. Let $N_G(x_1)\cap N_G(x_2)=\{r\}$ and $N_G(w_1)\cap N_G(w_2)=\{y\}$, and assume $r\notin\{y, w_1,w_2\}$ and $y\notin \{r,x_1,x_2\}$. Suppose any two degree $3$ vertices of $G$ contained in $V(G)\backslash \{x_1,x_2,w_1,w_2\}$ have distance at least three in $G$. Then $G-\{x_1,x_2,w_1,w_2\}$ has a Hamiltonian path between $r$ and $y$.
\end{lem}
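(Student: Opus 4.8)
The plan is to exploit the rigid structure forced by the hypotheses. Since $x_1x_2,w_1w_2\in E(C)$ and every edge of $G-E(C)$ joins the clockwise arc $P:=x_1Cw_1$ to the clockwise arc $Q:=w_2Cx_2$ (these two arcs are vertex-disjoint), the graph $G$ is a \emph{triangulated strip} between the two paths $P$ and $Q$, whose interior faces form a path in the dual. I would write $P:x_1=p_0p_1\cdots p_a=w_1$ and $Q:x_2=q_0q_1\cdots q_b=w_2$, so the two end edges are $x_1x_2=p_0q_0$ and $w_1w_2=p_aq_b$, and $G'=G-\{x_1,x_2,w_1,w_2\}$ is the union of the interior subpaths $p_1\cdots p_{a-1}$ and $q_1\cdots q_{b-1}$ together with the cross edges between them. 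Because $p_0q_0$ lies on $C$ it bounds a unique interior triangle $p_0q_0r$, and since no cross edge has both ends on one side, $r\in\{p_1,q_1\}$; symmetrically $y\in\{p_{a-1},q_{b-1}\}$. Using the top--bottom symmetry exchanging $P$ with $Q$ (which preserves all hypotheses and fixes $y$), I may assume $r=p_1$, and then $d_G(x_1)=2$, since the two boundary edges $p_0p_1$ and $p_0q_0$ at $x_1$ bound the single triangle $p_0p_1q_0$.

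The key structural step is to bound the fan at $x_2=q_0$ using the degree-$3$ hypothesis. Let $l\ge 1$ be maximal with $q_0p_l\in E(G)$; then $q_0$ is adjacent to $p_0,\dots,p_l$, the corner-turning triangle is $p_lq_0q_1$, and each of $p_1,\dots,p_{l-1}$ has neighbourhood exactly $\{p_{i-1},p_{i+1},q_0\}$, hence degree $3$ in $G$. These vertices are non-corner and pairwise adjacent, so the distance-$\ge 3$ hypothesis forces $l\le 2$. This is the decisive use of the hypothesis, and it reduces the left end to two explicit configurations.

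I would then prove the lemma by induction on $|V(G)|$, peeling a bounded piece off the left end. If $l=1$ then $p_1q_1\in E(G)$; I delete the corners $p_0,q_0$ to obtain a smaller strip $\tilde G$ with end edge $p_1q_1$ and new left apex $\tilde r\in\{p_2,q_2\}$, and prepend the edge $p_1q_1$ to a Hamiltonian path of $\tilde G'$ from $\tilde r$ to $y$. If $l=2$ then $p_1$ has degree $3$ and $p_2q_0,p_2q_1\in E(G)$; I delete $p_0,p_1$ to obtain a strip $\tilde G$ with end edge $p_2q_0$ and new left apex $\tilde r=q_1$, and prepend the path $p_1p_2$ to a Hamiltonian path of $\tilde G'$ from $q_1$ to $y$. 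In both cases $|V(\tilde G')|=|V(G')|-2$, the prepended vertices are exactly those of $G'$ not in $\tilde G'$, and the required adjacencies ($p_1\sim q_1$, resp. $p_1\sim p_2$ and $p_2\sim q_1$) hold by construction, so a Hamiltonian path of $\tilde G'$ from $\tilde r$ to $y$ extends to one of $G'$ from $r=p_1$ to $y$.

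The heart of the induction is checking that $\tilde G$ again satisfies the hypotheses. The right end is untouched, so $y$ remains the unique apex of $w_1w_2$; $\tilde r$ is the unique common neighbour of the new end vertices by the strip structure; and, crucially, the only vertices whose neighbourhoods change upon deletion are those adjacent to a deleted vertex, which in each subcase become the new corners of $\tilde G$ and are thus excluded from the degree-$3$ condition. Every non-corner vertex of $\tilde G$ therefore keeps its $G$-neighbourhood, so it is a degree-$3$ vertex of $\tilde G$ iff it is one of $G$; and since deleting vertices cannot decrease distances, the distance-$\ge 3$ condition is inherited. I expect this verification, together with the finitely many degenerate base cases in which $\tilde r$ coincides with $y$ or with a new corner and the strip is too short to peel (settled by exhibiting the Hamiltonian path directly), to be the main technical obstacle, while the decisive idea is the bound $l\le 2$ extracted from the distance-$\ge 3$ hypothesis.
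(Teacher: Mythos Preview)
Your approach is correct and is essentially the same as the paper's: both argue by induction on $|V(G)|$, peeling vertices off one end of the strip and extending an inductively obtained Hamiltonian path by one or two edges, with the distance-$\ge 3$ hypothesis invoked precisely to rule out the degenerate subcases in which the inductive setup would collapse. The only difference is organisational: the paper peels a single corner ($w_1$ or $w_2$) from the $y$-end and splits into cases according to which arc contains $y$, while you peel two vertices from the $r$-end and structure the cases via your fan bound $l\le 2$; your formulation makes the role of the degree hypothesis a bit more explicit, at the cost of slightly more base-case bookkeeping (note that some of your ``degenerate base cases'' are not settled by exhibiting a path but by showing the configuration already violates the distance-$\ge 3$ hypothesis, exactly as in the paper).
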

\begin{proof}
Note that $|V(G)|\geq 6$ as $r\notin\{y, w_1,w_2\}$ and $y\notin\{r,x_1,x_2\}$. We apply induction on $|V(G)|$. Without loss of generality, we may assume $r\in V(x_1Cw_1)$. Then $d_G(x_1)=2$.

Suppose $|V(G)|=6$. If $ry\in E(G)$ then we are done. So assume $ry\notin E(G)$. Then $y\in V(w_2Cx_2)$, $x_2w_1\in E(G)$, and $d_G(r)=d_G(y)=3$. This gives a contradiction since $d_G(r,y)=2$.

Now assume $|V(G)|>6$. We have two cases: $y\in V(x_1Cw_1)$ or $y\in V(w_2Cx_2)$.

\medskip

{\it Case} 1. $y\in V(x_1Cw_1)$. Then $d_G(w_1)=2$.

Consider $G_1=G-w_1$. Let $y'$ denote the unique vertex in $N_{G_1}(y)\cap N_{G_1}(w_2)$. If $y'\notin \{r,x_2\}$ then, by induction, $G_1-\{x_1,x_2, y, w_2\}$ has a Hamiltonian path $H_1$ between $r$ and $y'$; so $H_1 + y'y$ gives a Hamiltonian path between $r$ and $y$ in $G- \{x_1, x_2, w_1, w_2\}$. Hence, we may assume $y'\in \{r,x_2\}$. 

If $y'=r$ then $ry,rw_2\in E(G)$ and $d_G(y)=3$. Since $|V(G)|\geq 7$, $|V(w_2Cx_2)|\geq 3$. Now, $y$ and a degree $3$ vertex of $G$ contained in $V(w_2Cx_2)\backslash \{x_2,w_2\}$ are distance $2$ apart in $G$, a contradiction. 
So $y'=x_2$. Then $x_2y,x_2w_2\in E(G)$. Hence, since each edge of $G-E(C)$ is incident with both $x_1 C w_1$ and $x_2 C w_2$, $V(rCy) \subseteq N_G(x_2)$, and all vertices in $V(rCy-y)$ have degree $3$ in $G$.
Since $|V(G)|\geq 7$ and $x_2w_2\in E(G)$, $rCy-y$ contains two adjacent vertices of degree $3$ in $G$, a contradiction.

\medskip

{\it Case} 2. $y\in V(w_2Cx_2)$. Then $d_G(w_2)=2$.

Consider $G_2=G-w_2$. Let $y'$ denote the unique vertex in $N_{G_2}(y)\cap N_{G_2}(w_1)$. Similar to Case 1, if $y'\notin\{ r,x_2\}$ then, by induction, $G_2-\{x_1,x_2,y,w_1\}$ has a Hamiltonian path $H_2$ between $r$ and $y'$. Hence, $H_2 + y'y$ is a Hamiltonian path between $r$ and $y$ in $G-\{x_1,x_2,w_1,w_2\}$. 

If $y'=r$, then let $x_2'$ be the neighbor of $x_2$ on $w_2Cx_2$; now $rx_2' \cup yCx_2'$ is a Hamiltonian path between $r$ and $y$ in $G-\{x_1,x_2,w_1,w_2\}$. 
If $y'=x_2$, then $x_2w_1,x_2y\in E(G)$. It follows that $d_G(r)=d_G(y)=3$, which gives a contradiction since $d_G(r,y)=2$.
\end{proof}

Next, we discuss results related to the counting idea from \cite{AAT2020}.
Let $S$ be an independent set in a $4$-connected planar triangulation $G$ and $F\subseteq E(G)$ consist of $|S|$ edges incident with $S$. Alahmadi \textit{et al.} \cite{AAT2020} observed that $G-F$ is not $4$-connected only if some vertex in $S$ is contained in a separating $4$-cycle, or some vertex in $S$ is adjacent to three vertices in a separating $4$-cycle, or two vertices in $S$ are contained in a separating $5$-cycle, or three vertices in $S$ occur in some diamond-$6$-cycle. A \textit{diamond-$6$-cycle} is a graph isomorphic to the graph shown on the left in Figure \ref{diamond cycle}, in which the vertices of degree $3$ are called \textit{crucial} vertices. (A {\it diamond-$4$-cycle} is a graph isomorphic to the graph shown on the right in Figure~\ref{diamond cycle}, where the two degree $3$ vertices not adjacent to the degree $2$ vertex are its {\it crucial} vertices.) We say that $S$ {\it saturates} a $4$-cycle or $5$-cycle $C$ in $G$ if $|S\cap V(C)|=2$, and  $S$ {\it saturates} a diamond-$6$-cycle $D$ in $G$ if $S$ contains three crucial vertices of $D$.
\begin{figure}[htb] 
	\begin{center}
        \begin{minipage}{.2\textwidth}
        		\resizebox{3cm}{!}{\begin{tikzpicture}[scale=1, Wvertex/.style={circle, draw=black, fill=white, scale=2}, rvertex/.style={circle, draw=black, fill=black, scale=0.2},bvertex/.style={circle, draw=black, fill=white, scale=0.2}]

\node [bvertex] (v1) at (90:1) {};
\node [bvertex] (v2) at (210:1) {};
\node [bvertex] (v3) at (330:1) {};

\draw (v1) -- (v2) -- (v3) -- (v1) ;

\node [rvertex] (v4) at (30: 0.5) {};
\node [rvertex] (v5) at (150: 0.5) {};
\node [rvertex] (v6) at (270: 0.5) {};

\node [rvertex] (u4) at (30: 1.0) {};
\node [rvertex] (u5) at (150: 1.0) {};
\node [rvertex] (u6) at (270: 1.0) {};

\draw (v1) -- (u5) -- (v2) -- (u6) -- (v3) -- (u4) -- (v1);

\draw (v4) -- (u4);
\draw (v5) -- (u5);
\draw (v6) -- (u6);

\end{tikzpicture}	}
        \end{minipage}
            \hspace{2cm}
        \begin{minipage}{.2\textwidth}
        		\resizebox{3.3cm}{!}{\begin{tikzpicture}[scale=1, Wvertex/.style={circle, draw=black, fill=white, scale=2}, rvertex/.style={circle, draw=black, fill=black, scale=0.2},bvertex/.style={circle, draw=black, fill=white, scale=0.2}]

\node [rvertex] (v0) at (0,0) {};
\node [bvertex] (v1) at (90:1) {};
\node [rvertex] (v2) at (180:1) {};
\node [bvertex] (v3) at (270:1) {};
\node [bvertex] (v4) at (0: 1) {};

\draw (v1) -- (v2) -- (v3) -- (v4) -- (v1);
\draw (v0) -- (v1);
\draw (v0) -- (v2);
\draw (v0) -- (v3);

\end{tikzpicture}	}
        \end{minipage}
    \end{center}
    \caption{diamond-$6$-cycle (left); diamond-$4$-cycle (right); solid vertices represent the crucial vertices.}
    \label{diamond cycle}
\end{figure}
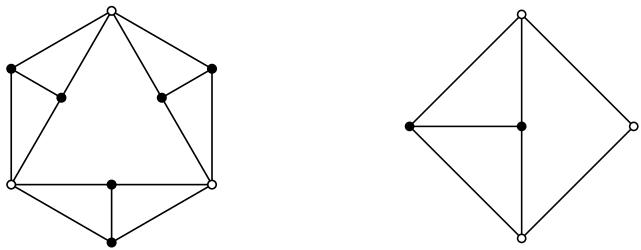

For a $5$-connected planar triangulation $G$, Alahmadi {\it et al.}\cite{AAT2020}  showed that there exists an independent set $S$ consisting of $\Omega(n)$ vertices of degree at most $6$ in $G$, such that $G-F$  is $4$-connected for each set $F$ consisting of $|S|$ edges of $G$ that are incident with $S$. Then it follows from a simple calculation that  $G$ has $2^{\Omega (n)}$ Hamiltonian cycles. 
Such  large independent sets need not exist in $4$-connected planar triangulations because of the existence of vertices of degree $4$ or separating $4$-cycles.

Next, we prove two lemmas that will help us deal with vertices of degree $4$ and separating $4$-cycles.
Let $G$ be a plane graph.
Suppose $u$ is a vertex of degree at most $6$ in $G$.
Define the {\it link} of $u$ in $G$, denoted by $A_u$, as
$$A_u = \begin{cases}
            E(G[N(u)]), & \textrm{ if $d(u)=4$,}\\
            \{e \in E(G): \textrm{$e$ is incident with $u$ and $G-e$ is $4$-connected}\}, & \textrm{ if $d(u)\in \{5,6\}$.}
        \end{cases}$$
\begin{lem}\label{lem:Au_size}
Let $G$ be a $4$-connected planar triangulation. Suppose $S$ is an independent set of vertices of degree at most $6$ in $G$ such that, for any $u\in S$ with $d(u)\in \{5,6\}$, no degree $4$ neighbors of $u$ are adjacent in $G$. Then the following statements hold:
\begin{itemize}
    \item [\textup{(i)}] For $u\in S$ with $d(u)\in \{5,6\}$, 
    $\{v\in N(u): uv\notin A_u\}$ is independent in $G$ 
    and, hence, $|A_u| \geq \lceil d(u)/2 \rceil$.
    \item [\textup{(ii)}] If $S$ saturates no $4$-cycle in $G$, then, for any distinct $u_1,u_2\in S$, $E(G[N[u_1]]) \cap E(G[N[u_2]]) = \emptyset$.
    
\end{itemize}
\end{lem}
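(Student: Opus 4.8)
The plan is to handle the two parts separately; part~(i) rests on identifying exactly which edges at $u$ are deletable, while part~(ii) is a short direct argument from the definition of saturation. Throughout I use that, since $G$ is $4$-connected (hence has no separating triangle), the neighbourhood of any vertex $v$ induces a chordless cycle, its \emph{link}.

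For part~(i), fix $u\in S$ with $d(u)=d\in\{5,6\}$ and write its link as $C_u=w_1w_2\cdots w_dw_1$. The first step is to characterize the ``bad'' neighbours, namely those $w_i$ with $uw_i\notin A_u$. I claim $uw_i\notin A_u$ exactly when the two common neighbours $w_{i-1},w_{i+1}$ of $u$ and $w_i$ have a further common neighbour $c\notin\{u,w_i\}$, equivalently when $uw_{i-1}cw_{i+1}$ is a separating $4$-cycle with $w_i$ in its interior. Indeed, $G-uw_i$ is $3$-connected, so if it is not $4$-connected it has a $3$-cut $T$; as $G$ is $4$-connected, $T$ must separate $u$ from $w_i$, and since $w_{i-1},w_{i+1}$ are the only common neighbours of $u,w_i$ (a third one would give a separating triangle) we get $\{w_{i-1},w_{i+1}\}\subseteq T$, say $T=\{w_{i-1},w_{i+1},c\}$. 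Using that $G-uw_i$ is a near-triangulation whose unique non-triangular face is the quadrilateral $uw_{i-1}w_iw_{i+1}$, and that $G$ has no separating triangle, one checks that $c$ must be adjacent to both $w_{i-1}$ and $w_{i+1}$; the converse is clear.

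The second step deduces that the bad neighbours form an independent set in $C_u$, for which it suffices to exclude two consecutive bad neighbours $w_i,w_{i+1}$. Badness of $uw_i$ and $uw_{i+1}$ yields a common neighbour $c$ of $w_{i-1},w_{i+1}$ and a common neighbour $c'$ of $w_i,w_{i+2}$, giving separating $4$-cycles $uw_{i-1}cw_{i+1}$ and $uw_ic'w_{i+2}$ through $u$. Around $u$ their attachments $\{w_{i-1},w_{i+1}\}$ and $\{w_i,w_{i+2}\}$ interleave, so the path $w_i c' w_{i+2}$ runs from $\mathrm{int}(uw_{i-1}cw_{i+1})$ to its exterior and must meet the cycle; since $c'\neq u,w_{i+1}$ and $c'=w_{i-1}$ would force the chord $w_{i-1}w_{i+2}$, planarity leaves only $c=c'$, a single vertex adjacent to all of $w_{i-1},w_i,w_{i+1},w_{i+2}$. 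Then $cw_{i-1}w_i$, $cw_iw_{i+1}$, $cw_{i+1}w_{i+2}$ are triangles, hence faces (no separating triangle), which pins down $N(w_i)=\{u,w_{i-1},c,w_{i+1}\}$ and $N(w_{i+1})=\{u,w_i,c,w_{i+2}\}$; thus $w_i,w_{i+1}$ are adjacent degree-$4$ neighbours of $u$, contradicting the hypothesis. Finally, an independent set in the $d$-cycle $C_u$ has size at most $\lfloor d/2\rfloor$, so at most $\lfloor d/2\rfloor$ edges at $u$ are bad and $|A_u|\ge d-\lfloor d/2\rfloor=\lceil d/2\rceil$.

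Part~(ii) is immediate. Suppose $e\in E(G[N[u_1]])\cap E(G[N[u_2]])$; then both endpoints of $e$ lie in $N[u_1]\cap N[u_2]$. As $S$ is independent, $u_1u_2\notin E(G)$, so $u_1\notin N[u_2]$ and $u_2\notin N[u_1]$, whence $N[u_1]\cap N[u_2]=N(u_1)\cap N(u_2)$. Writing $e=xy$, the vertices $x,y$ are distinct common neighbours of $u_1$ and $u_2$, so $C:=u_1xu_2y$ is a $4$-cycle (four distinct vertices, four present edges). Here $u_1,u_2\in S$, while $x,y\in N(u_1)$ cannot lie in the independent set $S$; hence $|S\cap V(C)|=2$, i.e.\ $S$ saturates $C$, contradicting the hypothesis. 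I expect the main obstacle to be purely in part~(i): verifying from planarity that the third cut-vertex $c$ is a common neighbour of $w_{i-1},w_{i+1}$, and then the planar bookkeeping that forces $c=c'$ in the two-consecutive-bad case; once these structural facts are secured, the degree-$4$ contradiction, the counting bound, and all of part~(ii) are routine.
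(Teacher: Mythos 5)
Your proof is correct and follows essentially the same route as the paper: for (i) you assume two adjacent ``bad'' neighbours, extract the $3$-cuts of $G-uw_i$ and $G-uw_{i+1}$ (each consisting of the two common neighbours plus a third vertex adjacent to both), force the two third vertices to coincide by planarity, and conclude that $w_i,w_{i+1}$ are adjacent degree-$4$ vertices, contradicting the hypothesis; part (ii) is the same direct $4$-cycle argument as in the paper. The only difference is that you spell out the planarity steps (the cut vertex being a common neighbour, and $c=c'$) that the paper compresses into ``since $G$ is a planar triangulation'' and ``we see from planarity,'' which is fine.
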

\begin{proof}
Suppose $u \in S$ and $d(u)\in \{5,6\}$, and suppose there exist two edges $e_1=uv_1, e_2=uv_2 \in E(G)\backslash A_u$ 
with $v_1v_2\in E(G)$. Let $v_0, v_3 \in N(u)\backslash \{v_1, v_2\}$ be the neighbors of $v_1, v_2 $ in $G[N(u)]$, respectively. Since $G-e_1$ is not $4$-connected, there exists a vertex $z\in V(G)$ such that $\{z, v_0,v_2\}$ is a $3$-cut in $G-e_1$. Since $G$ is a planar triangulation, we have $zv_0,zv_2\in E(G)$. Since $G-e_2$ is not $4$-connected, we see from planarity that $\{z,v_1,v_3\}$ is a $3$-cut in $G-e_2$. Thus, $zv_1,zv_3\in E(G)$ as $G$ is a planar triangulation.
Since $G$ has no separating triangles, we have $d(v_1) = d(v_2) = 4$, a contradiction. Thus, (i) holds.

For (ii), suppose $S$ saturates no $4$-cycle in $G$, and let $u_1, u_2 \in S$ be distinct. Suppose there exists $e\in E(G[ N[u_1]]) \cap E(G[ N[u_2]])$. Since $S$ is an independent set, it follows that $u_1, u_2$, and the two vertices incident with $e$ form a $4$-cycle in $G$, contradicting the assumption that $S$ saturates no $4$-cycle in $G$. Hence $E(G[ N[u_1]]) \cap E(G[ N[u_2]])=\emptyset$.
\end{proof}

The following lemma is derived by using an idea similar to one in \cite{AAT2020}.

\begin{lem}\label{4conn}
 Let $G$ be a $4$-connected planar triangulation and $S$ be an independent set of vertices of degree at most $6$ in $G$. Suppose that $A_u\neq \emptyset$ for all $u\in S$, that $S$ saturates no $4$-cycle, or $5$-cycle, or diamond-$6$-cycle in $G$, and that no degree $4$ vertex of $G$ in $S$ has a neighbor of degree $4$ in $G$. Let $F\subseteq \bigcup_{u\in S} A_u$ with $|F\cap A_u|\leq 1$ for all $u\in S$. Then $G-F$ is $4$-connected.  
\end{lem}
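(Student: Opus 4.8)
The plan is to show that $G-F$ remains $4$-connected by arguing that if it were not, then the removal of $F$ together with a $3$-cut of $G-F$ would reveal one of the forbidden configurations that $S$ is assumed to avoid. Since $G$ is $4$-connected, any cut of size at most $3$ in $G-F$ must be ``created'' by the deletion of $F$. So first I would suppose for contradiction that $G-F$ has a cut $T$ with $|T|\le 3$. Because $G$ itself is $4$-connected, $T$ cannot separate $G$, so there must be some edge of $F$ whose two endpoints lie on opposite sides of the separation induced by $T$ in $G-F$; more precisely, every component-crossing edge of $G$ (relative to the separation given by $T$) must belong to $F$. The key structural fact, which I expect to lean on heavily via Lemma~\ref{lem:Au_size}(ii), is that the edge sets $E(G[N[u]])$ for distinct $u\in S$ are disjoint, and that $|F\cap A_u|\le 1$; this sharply limits how many edges of $F$ can be ``local'' to any single region near the cut.

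The main case analysis would proceed according to $|T|$ and how many edges of $F$ cross the separation. First I would treat the situation where exactly one edge $e=uv\in F$ crosses, with $u\in S$. Since $G$ is $4$-connected, $(G-e)$ fails to be $4$-connected only in the way catalogued in the paragraph before Lemma~\ref{lem:Au_size}: namely, deleting a single incident edge can create a $3$-cut only by exposing a separating $4$-cycle through $u$, a vertex adjacent to three vertices of a separating $4$-cycle, and so on. For $u$ of degree $4$, membership $e\in A_u=E(G[N(u)])$ means $e$ is a chord of the $4$-cycle $N(u)$; removing such a chord keeps the graph $4$-connected precisely because the only potential small cut would be a separating $4$-cycle or a degree-$4$ neighbor, both excluded by the hypotheses that $S$ saturates no $4$-cycle and that no degree-$4$ vertex in $S$ has a degree-$4$ neighbor. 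For $u$ of degree $5$ or $6$, the definition of $A_u$ already guarantees $G-e$ is $4$-connected, so a single crossing edge cannot be the culprit.

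Next I would handle the harder case where two (or, when $|T|$ is small enough to permit it, three) edges of $F$ simultaneously cross the separation. Because $|F\cap A_u|\le 1$, these crossing edges must come from \emph{distinct} vertices $u_1,u_2\in S$ (and $u_3$), and by Lemma~\ref{lem:Au_size}(ii) their closed-neighborhood edge sets are disjoint. The cut $T$ of size $\le 3$ together with these crossing edges then forces a small cyclic structure: two crossing edges yield a separating $5$-cycle saturated by $\{u_1,u_2\}$, and three crossing edges yield a diamond-$6$-cycle whose three crucial vertices lie in $S$ — exactly the configurations the hypothesis forbids. Translating ``small cut plus few crossing edges'' into ``one of these named cycles, saturated by $S$'' is the technical heart of the argument, and I expect this translation to be the main obstacle: one must carefully use planarity, the triangulation property (every face is a triangle, so cuts are tightly connected), and the independence of $S$ to rule out degenerate cases and to pin down that the cut vertices plus the $S$-vertices really do close up into the claimed cycle.

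The overall difficulty, then, is not any single computation but the bookkeeping of the case split by $(|T|, \#\text{crossing }F\text{-edges})$ and showing each case yields a forbidden structure. I would organize the proof by first establishing that $T$ must have exactly $|F_T|$ crossing edges where $F_T\subseteq F$ is the set of $F$-edges between the two sides, that $|F_T|\le |T|$ is impossible to be too large relative to the $4$-connectivity of $G$, and that $|F_T|\in\{1,2,3\}$ are the only possibilities to examine. In each, I would produce the saturated $4$-cycle, $5$-cycle, or diamond-$6$-cycle (or a degree-$4$ adjacency), contradicting the hypotheses and completing the proof that $G-F$ is $4$-connected.
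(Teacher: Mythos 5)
Your plan is the same as the paper's: assume $G-F$ has a minimum cut $K$ with $|K|\le 3$, let $F'$ be the set of $F$-edges joining the two sides, bound $|F'|$, and then split on $|F'|\in\{1,2,3\}$, producing respectively a degree-$4$ adjacency or saturated $4$-cycle, a saturated $5$-cycle, and a saturated diamond-$6$-cycle. So the architecture is right, and the identification of which hypothesis kills which case is mostly right. But two of the steps you defer or assert are exactly where the content lies, and one of your stated justifications would not survive being written out.

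First, the finiteness of the case split rests on a quantitative fact you only gesture at: since $G$ is a planar triangulation, the two endpoints of each edge $e\in F'$ have \emph{exactly two} common neighbors, and both must lie in $K$; moreover two edges of $F'$ cannot share the same pair of common neighbors in $K$, else their four endpoints together with that pair contain a $4$-cycle saturated by $S$. This is what gives $|K|\ge 2$, $|F'|\le\binom{|K|}{2}$, hence $|K|=3$ and $1\le|F'|\le 3$; without it the bookkeeping never closes. Second, in the case $|F'|=1$ with $e\in A_u$ and $d(u)=4$, you claim the potential small cut is excluded because ``$S$ saturates no $4$-cycle,'' but that hypothesis only forbids $4$-cycles containing \emph{two} vertices of $S$; a separating $4$-cycle through $u$ alone is perfectly consistent with it. The paper's actual argument here is different: one shows $u\in K$, writes $K=\{u,v,w\}$ with $e=w_1w_2$ and $N_G(w_1)\cap N_G(w_2)=\{u,v\}$, and then uses the cyclic order of the four neighbors of $u$ to force one side of the separation to consist of the single vertex $w_1$ (or $w_2$), which is then a degree-$4$ vertex adjacent to $u$ — contradicting the no-adjacent-degree-$4$ hypothesis, not the saturation hypothesis. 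Similarly, in the $|F'|=3$ case you must first rule out degree-$4$ vertices among $u_1,u_2,u_3$ (a degree-$4$ $u_i$ is forced into $K$, which yields either a saturated $4$-cycle or a violation of independence) before the induced subgraph on $K$ and the six endpoints of $F'$ can be recognized as a diamond-$6$-cycle with $u_1,u_2,u_3$ crucial. These are the pieces you would need to supply to turn the outline into a proof.
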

\begin{proof}
Suppose there exists some $F\subseteq \bigcup_{u\in S} A_u$ such that $|F\cap A_u|\leq 1$ for all $u\in S$, and $G-F$ is not $4$-connected.
Let $K$ be a minimum cut of $G-F$; so $|K|\leq 3$. Let $G_1,G_2$ be subgraphs of $G-F$ such that $G-F=G_1\cup G_2$, $V(G_1\cap G_2)=K, E(G_1\cap G_2)=\emptyset$, and $V(G_i)\neq K$ for $i=1,2$. Let $F'$ be the set of the edges of $G$ between $G_1-K$ and $G_2-K$. Then $F'\subseteq F$ and $F'\neq \emptyset$ (as $G-K$ is connected).

\begin{obs}\label{obs1}
Since $G$ is a $4$-connected planar triangulation, for each $e\in F'$, the two vertices incident with $e$ have exactly two common neighbors, which must be contained in $K$.
\end{obs}
\begin{obs}\label{obs2}
For any two edges $e_1,e_2\in F'$,  there do not exist distinct vertices $u,v\in K$ such that all vertices incident with $e_1$ or $e_2$ are contained in $N_G(u) \cap N_G(v)$. For, otherwise, $G[N_G[u] \cup N_G[v]]$ contains a $4$-cycle with two vertices in $S$, contradicting the assumption that $S$ saturates no $4$-cycle in $G$.
\end{obs}

By Observation \ref{obs1}, $|K|\geq 2$. By Observations \ref{obs1} and \ref{obs2}, $|F'|\le  \binom{|K|}{2}$. Hence, $1\le |F'|\le 3$. Moreover, $|K|=3$ as, otherwise, $|K|=2$ and $|F'|\leq \binom{2}{2}=1$, contradicting the assumption that $G$ is $4$-connected. By the definition of $A_u$, if $e\in A_u \cap F'$ and $d_G(u)=4$, then $u\in K$; if $e\in A_u \cap F'$ and $d_G(u) \in \{5,6\}$, then $e$ is incident with $u$ and $u \notin K$.

Suppose $|F'|=1$ and let $e\in F'$ with $e\in A_u$ for some $u\in S$. If $d_G(u)=5$ or $6$, then $u$ is incident with $e$ and $G-e$ is $4$-connected by the definition of $A_u$, contradicting the fact that $K$ is a $3$-cut of $G-F'=G-e$. Thus $d_G(u)=4$ and $u\in K$. Let $e=w_1w_2$ and $K=\{u,v,w\}$ such that $w_1\in V(G_1)\backslash V(G_2)$, $w_2\in V(G_2)\backslash V(G_1)$, and  $N_G(w_1)\cap N_G(w_2)=\{u,v\}$. 
Again since $G$ is a planar triangulation and $K$ is a $3$-cut in $G-e$, we have $wu, wv\in E(G)$. Hence $C_1 = uw_1vwu$ and $C_2 = uw_2vwu$ are $4$-cycles in $G$. Let $x\in N_G(u)\backslash \{w, w_1, w_2\}$. Then $G[N_G(u)] = x w_2 w_1 w x$ or $G[N_G(u)] = x w_1 w_2 w x$. In the former case, $V(G_1)\backslash K=\{w_1\}$ as, otherwise, $\{w_1,w,v\}$ would be a $3$-cut in $G$; so $w_1$ and $u$ are two adjacent vertices of degree $4$ in $G$, a contradiction. In the latter case, $V(G_2)\backslash K=\{w_2\}$ as, otherwise, $\{w_2,w,v\}$
would be a $3$-cut in $G$; so $w_2$ and $u$ are two adjacent vertices of degree $4$ in $G$, a contradiction.

If $|F'|=2$ and let $F'=\{e_1,e_2\}$, then by Observations \ref{obs1} and \ref{obs2}, each vertex in $K$ is adjacent to both vertices incident with some edge in $F'$, and exactly one vertex of $K$ is adjacent to all vertices incident with $e_1$ or $e_2$. Hence, some $5$-cycle in the subgraph of $G$ induced by $K$ and the vertices incident with $F'$ contains two vertices from $S$, contradicting the assumption that $S$ saturates no $5$-cycle in $G$.  

Hence, $|F'|=3$, and let $e_1,e_2,e_3\in F'$ where $e_i\in A_{u_i}$ and $u_i\in S$ for $i=1,2,3$. Since $S$ is independent and saturates no $4$-cycle or 5-cycle, $F'$ is a matching in $G$. If two vertices in $\{u_1,u_2,u_3\}$ have degree $4$ in $G$, then these two vertices are contained in $K$ and in a $4$-cycle in $G$, a contradiction. If exactly one vertex in $\{u_1,u_2,u_3\}$, say $u_1$, has degree $4$ in $G$, then $u_1\in K$ and $u_1$ must be adjacent to a vertex in $\{u_2,u_3\}$,
contradicting the assumption that $S$ is independent. So $u_1, u_2$, and $u_3$ all have degree $5$ or $6$ in $G$. But then by Observations \ref{obs1} and \ref{obs2}, we see that the subgraph of $G$ induced by $K$ and the vertices of $G$ incident with $F'$ contains a diamond-$6$-cycle in which $u_1, u_2, u_3$ are three crucial vertices, contradicting the assumption that $S$ saturates no diamond-$6$-cycle. 
\end{proof}

We also need the following two lemmas from Lo \cite{Lo2020} and Alahmadi \textit{et al.} \cite{AAT2020}, that will help us to find an independent set saturating no $4$-cycle, or $5$-cycle, or diamond-$6$-cycle.

\begin{lem}[Lo \cite{Lo2020}] \label{5-cycle}
Let $G$ be a $4$-connected planar triangulation and let $S$ be an independent set of vertices of degree at most $6$ in $G$,  such that $S$ saturates no $4$-cycle in $G$. Then there exists a subset $S'\subseteq S$ of size at least $|S|/541$ such that $S'$ saturates no $5$-cycle in $G$.
\end{lem}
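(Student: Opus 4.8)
The plan is to build an auxiliary graph $H$ on the vertex set $S$ and reduce the lemma to a maximum-degree bound. Join two vertices $u,v\in S$ by an edge of $H$ precisely when $\{u,v\}$ saturates some $5$-cycle of $G$, i.e. when $u$ and $v$ both lie on a common $5$-cycle. A subset $S'\subseteq S$ saturates no $5$-cycle of $G$ if and only if $S'$ is independent in $H$; since a graph with maximum degree $\Delta$ has an independent set of size at least $|V|/(\Delta+1)$ by a greedy argument, it suffices to prove $\Delta(H)\le 540$. Thus everything reduces to the local statement: for each fixed $u\in S$, at most $540$ vertices $v\in S$ lie on a common $5$-cycle with $u$.

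First I would record two consequences of the hypothesis that $S$ saturates no $4$-cycle. Since $S$ is independent, two distinct vertices of $S$ are never adjacent; and if $u,v\in S$ had two common neighbours $p,q$, then $upvqu$ would be a $4$-cycle saturated by $S$, a contradiction, so any two vertices of $S$ have at most one common neighbour. Now fix $u\in S$ with link cycle $L=c_1c_2\cdots c_d c_1$ (so $d=d(u)\le 6$; note $L$ has no chords as $G$ has no separating triangle), and let $v\in S$ lie on a $5$-cycle with $u$. As $u,v$ are non-adjacent, on this $5$-cycle they are at cyclic distance two, so the cycle has the form $u\,a\,v\,b\,c\,u$ with $a\in N(u)\cap N(v)$ and $v\,b\,c\,u$ a path of length three; in particular $v$ is at distance exactly two from $u$. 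By the common-neighbour bound, $a$ is the unique common neighbour of $u$ and $v$, so $v$ is adjacent to exactly one vertex of $L$, namely $a$. If the intermediate vertex $b$ lay on $L$, then $v$ would be adjacent to the two distinct link vertices $a$ and $b$, contradicting the previous sentence; hence $b$ too is at distance exactly two from $u$, and $vb$ is an edge joining two off-$L$ vertices that are respectively adjacent to the distinct link vertices $a=c_i$ and $c=c_j$.

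The main work, and the step I expect to be the real obstacle, is the planar count that turns these constraints into the constant $540$. The naive count fails because the second neighbourhood of $u$ can be arbitrarily large: a single link vertex $c_i$ may have many off-$L$ neighbours in $S$. The decisive point is that, around each link vertex $c_i$, its off-$L$ neighbours occur as one contiguous arc $z_1\cdots z_t$ in the rotation at $c_i$ (consecutive vertices being adjacent, since the link of $c_i$ is a cycle), and a bad $v=z_k$ requires the witnessing edge $vb$ to reach the off-$L$ arc of a \emph{different} link vertex $c_j$. By planarity such ``territory-crossing'' edges are confined to the neighbourhoods of the boundaries between consecutive arcs, i.e. to the finitely many outer triangles sitting on the link edges $c_ic_{i+1}$. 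I would therefore charge each bad $v$ to such a boundary and, for each of the at most six link edges, bound the number of vertices chargeable to it using planarity together with the unique-common-neighbour property; summing over the at most six boundaries yields a constant, which a careful accounting pins down to at most $540$.

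Assembling these pieces gives $\Delta(H)\le 540$, and the greedy independent-set bound then produces $S'\subseteq S$ with $|S'|\ge |S|/541$ saturating no $5$-cycle, as required. The only delicate part is the explicit value of the constant in the planar count; the qualitative boundedness is forced, as explained, by the combination of planarity and the fact (from the no-$4$-cycle hypothesis) that each bad $v\in S$ is adjacent to exactly one vertex of the link of $u$.
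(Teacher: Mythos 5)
This lemma is quoted in the paper from Lo \cite{Lo2020} without proof, so your proposal must stand on its own; unfortunately its central quantitative claim is false. Your reduction needs $\Delta(H)\le 540$, i.e.\ that for a fixed $u\in S$ only boundedly many $v\in S$ share a $5$-cycle with $u$. This is not true, even under the no-saturated-$4$-cycle hypothesis. Concretely: let $u$ have link $c_1c_2\cdots c_6c_1$, give $c_1$ a long fan $z_1,z_2,\ldots,z_t$ of off-link neighbors and $c_6$ a long fan $w_1,\ldots,w_s$, and in the outer region add the \emph{nested} (hence planar, pairwise non-crossing) edges $z_{3i+2}\,w_{m+1-i}$ for $i=1,\ldots,m$, triangulating each hexagonal region between consecutive nested edges through its middle vertices so that distinct $z_{3i+2}$'s acquire no common neighbor other than $c_1$; then complete to a $4$-connected triangulation. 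The set $S=\{u\}\cup\{z_{3i+2}:1\le i\le m\}$ is independent, every pair in $S$ has exactly one common neighbor (namely $c_1$), so $S$ saturates no $4$-cycle; yet each $z_{3i+2}$ lies on the $5$-cycle $u\,c_1\,z_{3i+2}\,w_{m+1-i}\,c_6\,u$, so $u$ has degree $m$ in your conflict graph $H$, with $m$ arbitrarily large. This is exactly the ``territory-crossing'' configuration you hoped to exclude: nesting lets arbitrarily many vertices of $c_1$'s arc reach $c_6$'s arc, and the unique-common-neighbor property is no obstruction because the witnessing edges go to pairwise distinct targets.

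Everything before that point in your write-up is fine (non-adjacency forces the $5$-cycle to have the shape $u\,a\,v\,b\,c\,u$, and each bad $v$ is adjacent to exactly one link vertex of $u$, since two such adjacencies would give a saturated $4$-cycle), and the conflict-graph framing is the natural one. But the conclusion must be obtained by a \emph{global} or amortized argument rather than a local maximum-degree bound: one needs to show the conflict graph on $S$ is sparse in an appropriate sense (e.g.\ $O(1)$-degenerate, or with a linear number of edges, so that a greedy deletion rather than a greedy max-degree bound yields the constant). Note that in the counterexample above $H$ is a fan --- one vertex of huge degree attached to a path --- which has unbounded maximum degree but degeneracy $2$, so a degeneracy-type statement survives precisely where yours fails. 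That distinction is the missing idea, and closing it is the actual content of Lo's proof; as written, your argument cannot produce any constant in place of $541$.
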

\begin{lem}[Alahmadi, Aldred, and Thomassen \cite{AAT2020}; Lo \cite{Lo2020}] \label{diamond}
Let G be a $4$-connected planar triangulation and let $S$ be an independent set of vertices of degree at most $6$ in $G$,  such that $S$ saturates no $4$-cycle in $G$. Then there exists a subset $S'\subseteq S$ of size at least $|S|/301$ such that $S'$ saturates no diamond-$6$-cycle in $G$.
\end{lem}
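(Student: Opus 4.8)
The plan is to recast the conclusion as the existence of a large independent set in an auxiliary $3$-uniform hypergraph and then to bound the number of its edges by a purely local argument in the plane. Let $\mathcal{T}$ be the family of all triples $\{a_1,a_2,a_3\}\subseteq S$ that form the set of crucial vertices of some diamond-$6$-cycle in $G$. By the definition of saturation, a subset $S'\subseteq S$ saturates no diamond-$6$-cycle in $G$ if and only if $S'$ contains no member of $\mathcal{T}$; that is, $S'$ is an independent set of the $3$-uniform hypergraph $\mathcal{H}$ with vertex set $S$ and edge set $\mathcal{T}$. So it suffices to produce such an $S'$ with $|S'|\ge |S|/301$.

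The main step is to bound the degree of $\mathcal{H}$, and the claim is that every $u\in S$ lies in at most $d_G(u)\le 6$ members of $\mathcal{T}$. To see this, fix $u\in S$ and recall that in a diamond-$6$-cycle each crucial vertex is adjacent to exactly two vertices of the inner triangle and that these two vertices are themselves adjacent. Suppose $u$ is a crucial vertex of a diamond-$6$-cycle whose inner triangle is $z_1z_2z_3$, with $u$ adjacent to $z_1$ and $z_2$. Because $G$ is a $4$-connected planar triangulation, it has no separating triangle, so every triangle of $G$ bounds a face; in particular $uz_1z_2$ and $z_1z_2z_3$ are the two distinct faces incident with the edge $z_1z_2$. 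Hence, given $u$, the edge $z_1z_2$ forces $z_3$ to be the third vertex of the face on $z_1z_2$ other than $uz_1z_2$, which determines the whole inner triangle and thus (since the other two crucial vertices are, analogously, the third vertices of the remaining faces on the edges $z_1z_3$ and $z_2z_3$) the entire triple in $\mathcal{T}$. As $z_1z_2$ ranges only over the edges of $G[N_G(u)]$, and since the absence of separating triangles forces $G[N_G(u)]$ to be a cycle with exactly $d_G(u)$ edges, there are at most $d_G(u)\le 6$ such triples. Summing over the three crucial vertices of each triple yields $3|\mathcal{T}|\le\sum_{u\in S}d_G(u)\le 6|S|$, so $|\mathcal{T}|\le 2|S|$ and $\mathcal{H}$ has maximum degree at most $6$.

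With the edge bound in hand, extracting $S'$ is routine. Selecting each vertex of $S$ independently with probability $p$ and then deleting one vertex from each fully selected triple destroys every member of $\mathcal{T}$ and leaves, in expectation, at least $p|S|-|\mathcal{T}|\,p^{3}\ge (p-2p^{3})|S|$ vertices; taking $p=1/3$ gives a surviving independent set of size at least $\tfrac{7}{27}|S|>|S|/301$, so a suitable $S'$ exists. (Alternatively, repeatedly deleting a vertex of maximum $\mathcal{H}$-degree, which always destroys at least one edge while $\mathcal{H}$ still has an edge, yields the same kind of bound deterministically.)

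The crux of the argument, and the only place genuine planarity is used, is the structural claim that a fixed crucial vertex lies in at most $d_G(u)$ diamond-$6$-cycles: one must verify that the two inner neighbours of a crucial vertex, together with the no-separating-triangle property, pin down the inner facial triangle and hence the full crucial triple, and check that the degenerate possibilities (two of the opposite apexes coinciding, or the inner triangle meeting $S$) can only decrease the count. Once this local bound is established the hypergraph extraction is immediate and in fact produces a constant much better than the stated $1/301$.
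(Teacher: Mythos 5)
Your proposal cannot be matched against an in-paper argument, because the paper has none: Lemma~\ref{diamond} is imported as a black box from Alahmadi--Aldred--Thomassen and Lo, stated without proof, and used only once (the $|F'|=3$ case of Lemma~\ref{4conn}). So yours is a genuinely independent route, and as far as I can check it is correct. The crux --- that in a $4$-connected planar triangulation every triangle bounds a face, so that fixing a crucial vertex $u$ and the edge $z_1z_2$ of $G[N_G(u)]$ it attaches to forces $z_3$ (the third vertex of the face across $z_1z_2$) and then forces the other two crucial vertices (the third vertices of the faces across $z_1z_3$ and $z_2z_3$) --- is sound, and combined with the fact that $G[N_G(u)]$ is a chordless cycle (any chord would span a separating triangle), it gives at most $d_G(u)\le 6$ crucial triples through each $u\in S$, hence $|\mathcal{T}|\le 2|S|$; the alteration step is then routine and $7/27>1/301$. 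Notably, your argument never uses that $S$ is independent or that $S$ saturates no $4$-cycle, so you prove a stronger statement with a much better constant than the cited one; what the paper's citation buys is only brevity, while your proof makes the section self-contained and sharpens the black box. Two caveats you should address. First, the parenthetical claim that greedy maximum-degree deletion ``yields the same kind of bound'' is not justified as written; the clean deterministic variant is to pick any vertex of $S$, discard the at most $2\cdot 6=12$ other vertices lying in triples through it, and recurse, giving $|S|/13$. Second, your encoding rests on reading ``crucial vertices'' as the three degree-$3$ apexes; this agrees with the paper's written definition and with the way the lemma is applied in Lemma~\ref{4conn}, but the caption of Figure~\ref{diamond cycle} also paints the three pendant vertices solid, and under that wider reading (saturation meaning any three of six marked vertices lie in $S$) your hypergraph would no longer encode saturation and the degree bound would fail, since pendants can be arbitrary neighbors of apexes; you should say explicitly which definition you are using and why it is the relevant one.
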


We need another result from Lo \cite{Lo2020}, which shows that any $4$-connected planar triangulation either has a large independent set saturating no $4$-cycle, or contains two vertices with many common neighbors.

\begin{lem}[Lo \cite{Lo2020}]\label {ISET}
Let $G$ be a $4$-connected planar triangulation. Let $S$ be an independent set of vertices of degree at most $6$ in $G$, and $S'$ be a maximal subset of $S$ such that $S'$ saturates no $4$-cycle in $G$. Then there exist  distinct vertices $v,x\in V(G)$ such that $|(N(v)\cap N(x))\cap S| \geq |S|/(9|S'|)$.
\end{lem}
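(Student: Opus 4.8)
The plan is to exploit the maximality of $S'$ to attach to each vertex of $S\setminus S'$ a short, tightly constrained $4$-cycle, and then, by a double pigeonhole over the vertices of $S'$ and over the ``realized'' common-neighbour pairs around each of them, to locate a single pair $\{v,x\}$ that forms the side-pair of many such $4$-cycles. The vertices producing these $4$-cycles all lie in $S$ and are adjacent to both $v$ and $x$, which is exactly what the conclusion asks for. The genuinely geometric input, and the source of the constant $9$, is a planar non-crossing estimate bounding the number of realized pairs around a vertex of degree at most $6$.

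First I would record the structural consequence of maximality. Fix $w\in S\setminus S'$. Since $S'$ is maximal among subsets of $S$ saturating no $4$-cycle, $S'\cup\{w\}$ saturates some $4$-cycle $C$, while $S'$ itself saturates none; hence $C$ passes through $w$, and $|V(C)\cap(S'\cup\{w\})|=2$ forces $C$ to contain exactly one vertex $u_w\in S'$. Because $S$ is independent, $w$ and $u_w$ are non-adjacent, so they are the two ``diagonal'' vertices of $C$, giving $C=w\,a_w\,u_w\,b_w$ with $a_w,b_w\in N(w)\cap N(u_w)$; independence of $S$ also gives $a_w,b_w\notin S$. Thus every $w\in S\setminus S'$ determines a vertex $u_w\in S'$ of degree at most $6$ together with an unordered pair $\{a_w,b_w\}\subseteq N(u_w)$ of common neighbours of $w$ and $u_w$.

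The geometric heart is to bound, for fixed $u\in S'$, the number of pairs $\{a,b\}\subseteq N(u)$ that can arise in this way. As $G$ is a $4$-connected planar triangulation it has no separating triangle, so $N(u)$ induces a cycle $Z=z_1z_2\cdots z_dz_1$ with $4\le d=d(u)\le 6$, the only vertex inside $Z$ is $u$, and every $w\notin N[u]$ lies outside $Z$ in the embedding. I claim two realized pairs cannot \emph{cross}: if $\{z_i,z_j\}$ and $\{z_k,z_l\}$ interleave along $Z$ and are witnessed by external vertices $w\ne w'$, then the edges $w z_i$, $w z_j$ together with the arc of $Z$ from $z_i$ to $z_j$ through $z_k$ bound a region separating $z_k$ from $z_l$, so $w'$ cannot join to both without crossing an edge, contradicting planarity. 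Hence the realized pairs form a non-crossing family of chords and sides of the $d$-gon $Z$, and any such family has at most $2d-3\le 9$ members; the worst case $d=6$ is what produces the factor $9$.

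Finally I would conclude by pigeonhole. The number of targets $(u,\{a,b\})$ with $u\in S'$ and $\{a,b\}$ realized is at most $\sum_{u\in S'}(2d(u)-3)\le 9|S'|$, and $w\mapsto(u_w,\{a_w,b_w\})$ maps $S\setminus S'$ into these targets, so some target $(u^*,\{v,x\})$ has at least $|S\setminus S'|/(9|S'|)$ preimages $w$, each lying in $S$ and adjacent to both $v$ and $x$; this yields $|(N(v)\cap N(x))\cap S|\ge |S\setminus S'|/(9|S'|)$. To replace $|S\setminus S'|$ by $|S|$ I would split on the ratio $|S'|/|S|$: when $|S'|$ is a constant fraction of $|S|$ the target $|S|/(9|S'|)$ drops to a constant, and it then suffices to exhibit one pair of adjacent vertices with a common neighbour in $S$ (any two consecutive neighbours of a fixed vertex of $S$ work); otherwise $|S\setminus S'|$ is close enough to $|S|$ that the displayed bound already dominates $|S|/(9|S'|)$ after fixing constants. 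I expect the main obstacle to be the planar non-crossing argument of the third paragraph, together with the careful bookkeeping that collapses the two pigeonhole losses and the case split exactly into the factor $9$; everything else is routine.
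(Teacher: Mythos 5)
The paper itself gives no proof of this lemma (it is quoted from Lo \cite{Lo2020}), so your argument can only be judged on its own terms. Its core is correct and is surely the intended one: maximality of $S'$ gives, for each $w\in S\setminus S'$, a $4$-cycle in which $w$ and a unique $u_w\in S'$ form the diagonal pair (independence of $S$ forces them to be opposite, and forces $a_w,b_w\notin S$); since $G$ is a $4$-connected planar triangulation, $N(u)$ induces a cycle $Z$ whose interior contains only $u$, so all witnesses $w$ lie outside $Z$ and your separation argument correctly shows the realized pairs are non-crossing chords/sides of $Z$, hence at most $2d(u)-3\le 9$ per $u\in S'$; pigeonhole over the at most $9|S'|$ targets then finds a popular pair $\{v,x\}$. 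All of that is sound.

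The gap is in your final paragraph, where you try to upgrade $|S\setminus S'|/(9|S'|)$ to $|S|/(9|S'|)$. The constant $9$ in the statement is fixed, so ``after fixing constants'' is not available, and your two cases do not cover everything: the one-common-neighbour trick only suffices when $|S|/(9|S'|)\le 1$, i.e.\ $|S'|\ge |S|/9$, while in the complementary case the pigeonhole bound (even taken with ceilings) can genuinely fall short. Concretely, take $|S|=10$, $|S'|=1$: the $9$ vertices of $S\setminus S'$ map into as many as $9$ targets, so you are only guaranteed a target with one preimage, whereas the lemma demands $|(N(v)\cap N(x))\cap S|\ge 10/9$, i.e.\ at least $2$; and the single-common-neighbour trick also yields only $1$. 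The repair is an observation your setup makes available but never uses: for the popular target $(u^*,\{v,x\})$, the vertex $u^*$ itself lies in $S$, is adjacent to both $v$ and $x$, and is distinct from every preimage (those lie in $S\setminus S'$). Hence
\[
|(N(v)\cap N(x))\cap S|\;\ge\; \frac{|S\setminus S'|}{9|S'|}+1\;\ge\;\frac{|S\setminus S'|}{9|S'|}+\frac{|S'|}{9|S'|}\;=\;\frac{|S|}{9|S'|},
\]
which makes your case split unnecessary and closes the gap in one line.
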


The following result can be easily deduced from the previous three lemmas.

\begin{lem}\label {2cases}
Let $G$ be a $4$-connected planar triangulation on $n$ vertices.  Let $I$ be an independent set of vertices of degree at most $6$ in $G$. For any positive integer $t$, one of the following statements holds: 
\begin{itemize}
    \item[\textup{(i)}] There exist distinct vertices 
    $v,x\in V(G)$ such that $|N(v)\cap N(x)\cap I| \geq t$.
\item[\textup{(ii)}] There is a subset $S\subseteq I$, such that  $|S|>
|I|/(t\times 9 \times 541\times 301)$ and  $S$ saturates no $4$-cycle, or $5$-cycle, or diamond-$6$-cycle in $G$.
\end{itemize}
\end{lem}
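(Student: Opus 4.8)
The plan is to deduce the dichotomy directly from Lemma~\ref{ISET} together with the two refinement lemmas, Lemma~\ref{5-cycle} and Lemma~\ref{diamond}, by splitting on the size of a maximal $4$-cycle-free subset. First I would apply Lemma~\ref{ISET} with the independent set taken to be $I$: letting $S'$ be a maximal subset of $I$ that saturates no $4$-cycle in $G$, Lemma~\ref{ISET} produces distinct vertices $v,x\in V(G)$ with
\[
|N(v)\cap N(x)\cap I|\ \geq\ \frac{|I|}{9|S'|}.
\]
The single parameter $|S'|$ now controls which alternative we land in, and the threshold $|I|/(9t)$ is exactly the break-even point.

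If $|S'|\leq |I|/(9t)$, then the displayed bound gives $|N(v)\cap N(x)\cap I|\geq |I|/(9|S'|)\geq |I|/(9\cdot |I|/(9t))=t$, so alternative~(i) holds with this pair $v,x$. Otherwise $|S'|>|I|/(9t)$, and I would peel $S'$ down twice. Since $S'$ saturates no $4$-cycle, Lemma~\ref{5-cycle} yields $S''\subseteq S'$ with $|S''|\geq |S'|/541$ and $S''$ saturating no $5$-cycle; because $S''\subseteq S'$, it still saturates no $4$-cycle, so Lemma~\ref{diamond} applies to $S''$ and yields $S\subseteq S''$ with $|S|\geq |S''|/301$ saturating no diamond-$6$-cycle. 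Combining the two factors, $|S|\geq |S'|/(541\times 301)>|I|/(t\times 9\times 541\times 301)$, which is the bound required in~(ii).

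The one point that needs care, rather than any genuine obstacle, is verifying that the three saturation properties survive the successive passages to subsets, so that the final set $S$ simultaneously saturates no $4$-cycle, no $5$-cycle, and no diamond-$6$-cycle. This follows from independence: any independent set meets a $4$-cycle or a $5$-cycle in at most two vertices, so ``saturates no $4$-cycle'' and ``saturates no $5$-cycle'' force the intersection with every such cycle to have size at most one, a condition that is clearly monotone under taking subsets. Hence $S\subseteq S''\subseteq S'$ inherits the $4$-cycle-free property from $S'$ and the $5$-cycle-free property from $S''$, while the diamond-$6$-cycle-free property is guaranteed by the final application of Lemma~\ref{diamond}. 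With these inheritances in hand, the only remaining work is the arithmetic of the two cases, which is routine; I expect no step to present a substantive difficulty beyond choosing the threshold $|I|/(9t)$ correctly so that the constants from Lemmas~\ref{ISET}, \ref{5-cycle}, and~\ref{diamond} assemble into the stated factor $t\times 9\times 541\times 301$.
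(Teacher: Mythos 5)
Your proof is correct and follows essentially the same route as the paper's: take a maximal $4$-cycle-free subset $S'\subseteq I$, split on whether $|S'|\leq |I|/(9t)$ using Lemma~\ref{ISET}, and otherwise refine $S'$ via Lemmas~\ref{5-cycle} and~\ref{diamond}. The only difference is that you spell out the monotonicity of the saturation properties under taking subsets (which the paper leaves implicit), and this check is indeed valid since an independent set meets any $4$- or $5$-cycle in at most two vertices.
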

\begin{proof}
Let $S_1$ be a maximal subset of $I$ such that $S_1$ saturates no $4$-cycle in $G$. If $|S_1|\leq |I|/(t\times 9)$, then by Lemma~\ref{ISET} there are distinct vertices $v,x$ in $G$ such that $|N(v)\cap N(x)\cap I|\geq |I|/(9|S_1|)\geq |I|/(9|I|/(t\times 9))=t$; so (i) holds. 

Now suppose $|S_1|> |I|/(t\times 9)$. By Lemmas \ref{5-cycle} and \ref{diamond}, there exists $S\subseteq S_1$  such that $S$ saturates no $4$-cycle, or $5$-cycle, or diamond-$6$-cycle in $G$, and $$|S|\geq |S_1|/(541\times 301)>|I|/(t\times  9\times 541\times 301);$$ 
thus (ii) holds.
\end{proof}

We conclude this section by stating two results on Tutte paths and Tutte cycles.
 Let $G$ be a graph and $H\subseteq G$. An {\it $H$-bridge} of $G$ is a subgraph of $G$ induced by either an edge in $E(G)\backslash E(H)$ with both incident vertices in $V(H)$, or all edges in $G-H$ with at least one incident vertex in a single component of $G-H$.
For an  $H$-bridge $B$ of $G$, the vertices in $V(B\cap H)$ are the {\it attachments} of $B$ on $H$.
A path (or cycle) $P$ in a graph $G$ is called a {\it Tutte path} (or {\it Tutte cycle}) if every $P$-bridge of $G$ has at most three attachments on $P$. If, in addition, every $P$-bridge of $G$ containing an edge of some subgraph $C$ of $G$ has at most two attachments on $P$, then $P$ is called a {\it $C$-Tutte path}  (or {\it $C$-Tutte cycle}) in $G$.
Thomassen \cite{Thomassen1983} proved the following result on Tutte paths in $2$-connected planar graphs.

\begin{lem}[Thomassen \cite{Thomassen1983}] \label{tuttepath}
Let $G$ be a $2$-connected plane graph and $C$ be its outer cycle, and let $x\in V(C)$,  $y\in V(G)\backslash\{x\}$, and $e\in E(C)$. Then $G$ has a $C$-Tutte path $P$ between $x$ and $y$ such that $e\in E(P)$. 
\end{lem}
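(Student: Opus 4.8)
The plan is to prove the statement by induction on $|V(G)|$, carrying the full $C$-Tutte conclusion (and not merely the bare Tutte-path conclusion) through the induction, since the two-attachment bound on bridges meeting $C$ is precisely what makes the gluing step close. For the base case I take $G=C$, a single cycle. Here $x$ and $y$ split $C$ into two arcs, and since $e$ is a single edge it lies entirely in one of them; taking $P$ to be the arc from $x$ to $y$ that contains $e$, the complementary arc is the unique $P$-bridge and its attachments on $P$ are exactly $x$ and $y$. Thus every $P$-bridge has at most two attachments, so both the Tutte and the $C$-Tutte conditions hold (even though this bridge consists entirely of edges of $C$). Note that $y\in V(C)$ automatically in this case, so the extra generality $y\in V(G)$ causes no trouble at the base.

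For the inductive step with $G\neq C$, the governing dichotomy is whether $G$ has a $2$-cut or is $3$-connected. In the first case (which includes every chord of $C$, since the two endpoints of a chord form a $2$-cut of $G$) I fix a $2$-cut $\{p,q\}$ and split $G$ into subgraphs $G_1,G_2$ with $V(G_1)\cap V(G_2)=\{p,q\}$ and $G_1\cup G_2=G$, adding a virtual edge $pq$ to each side when $pq\notin E(G)$. Each $G_i$ is a $2$-connected plane graph with fewer vertices than $G$, whose outer cycle $C_i$ is the corresponding arc of $C$ together with the path through $\{p,q\}$. I label so that $e\in E(C_1)$ and split into subcases according to whether $y$ lies in $G_1$ or in $G_2$. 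I then apply the induction hypothesis to $G_1$ (using the prescribed edge $e$) and, whenever the desired path must cross into $G_2$, also to $G_2$ with endpoints $p,q$ and a chosen boundary edge, and splice the two sub-paths together at $p$ and $q$. The crucial point is to arrange the endpoints so that both $p$ and $q$ end up on the final path $P$: then the portion of whichever side $P$ does not traverse breaks into $P$-bridges attached only within $\{p,q\}$, hence with at most two attachments each, which is exactly what the $C$-Tutte clause permits even though these bridges contain edges of $C$.

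In the second case $G$ is $3$-connected, so $C$ is chordless; since $G\neq C$ there are vertices interior to $C$. Here I reduce to a strictly smaller $2$-connected instance by a planar separation adapted to $e$: for instance, letting $D$ be the boundary cycle of the unique inner face incident with $e$, the cycle $D$ contains $e$ and determines a system of bridges lying between $D$ and $C$; I apply the induction hypothesis to these smaller pieces (each a $2$-connected plane graph with an outer cycle inherited from arcs of $C$ and $D$) and reassemble a global $x$--$y$ path routed through $e$. As in the chord case, the reassembly is carried out so that the shared vertices lie on $P$ and the unused pieces contribute only low-attachment bridges.

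The main obstacle throughout is the bridge bookkeeping in the splicing steps: I must check that no $P$-bridge of $G$ acquires a fourth attachment when bridges of the sub-instances merge across the shared vertices, and that every $P$-bridge meeting $C$ still has at most two attachments. The decisive control is ensuring that the cut vertices (and, where relevant, the endpoints of $e$) always lie on $P$, so that an entire side or piece can hang off as a two-attachment bridge; this is exactly why one must induct with the sharpened $C$-Tutte hypothesis rather than the weaker Tutte-path statement. Finally, allowing $y$ to be an arbitrary vertex of $G$ rather than a vertex of $C$ needs no separate argument: in each recursive call $y$ is simply passed as an endpoint in whichever piece contains it, and the induction hypothesis already admits an interior endpoint.
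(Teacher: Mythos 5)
The paper gives no proof of this lemma at all: it is quoted verbatim as Thomassen's theorem from \cite{Thomassen1983}, so the only meaningful comparison is against Thomassen's original argument. Your high-level framework does match that argument in spirit --- induction carrying the full $C$-Tutte conclusion, splitting along $2$-cuts with a virtual edge $pq$, and making sure separating vertices either lie on $P$ or are swallowed by two-attachment bridges. But the proposal has a genuine gap exactly where the theorem is hard, namely the $3$-connected case. Your proposed reduction there is vacuous: if $D$ is the boundary cycle of the inner face incident with $e$, then $D$ bounds a face, so there are no vertices strictly inside $D$, and the ``pieces lying between $D$ and $C$'' are not smaller instances --- collectively they are all of $G$. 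No reduction in size takes place, so the induction does not advance. Even if one reinterprets your plan as decomposing $G$ into the bridges of $C\cup D$, those pieces need not be $2$-connected (so the induction hypothesis does not apply to them), and when their paths are spliced, bridges of different pieces sharing an attachment vertex off $P$ merge into a single $P$-bridge of $G$ whose attachment count can exceed three; controlling precisely this phenomenon is the actual content of Thomassen's proof, which in this case proceeds quite differently (deleting the end vertex $x$, analyzing the block structure and outer walk of $G-x$, and invoking the $C$-Tutte strengthening to bound the attachments of the reattached bridges). None of that machinery, or any workable substitute for it, appears in your sketch.

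Two smaller inaccuracies in the $2$-cut case are worth flagging as well. First, a $2$-cut of $G$ need not meet $C$: both vertices $p,q$ may be interior, in which case the side of the cut avoiding $C$ does not have ``an arc of $C$ plus the path through $\{p,q\}$'' as its outer cycle, so your description of the pieces $G_i$ and their cycles $C_i$ does not cover all configurations. Second, your ``crucial point'' --- arranging that \emph{both} $p$ and $q$ always lie on the final path --- is not achievable in general; the correct argument allows one of $p,q$ to end up inside a $P$-bridge, and it is exactly the $C$-Tutte clause (two attachments for bridges containing edges of the outer cycle) together with a careful choice of which piece receives which endpoint and prescribed edge that keeps the bookkeeping consistent in that situation. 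These are repairable, but combined with the failure of the $3$-connected case, the proposal as written does not constitute a proof of the lemma.
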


Note that Lemma~\ref{tuttepath} implies that every $4$-connected planar graph is Hamiltonian connected and has a Hamiltonian cycle through two given edges that are cofacial.

A \textit{circuit graph} is an ordered pair $(G, C)$ consisting of a $2$-connected plane graph $G$ and a facial cycle $C$ of $G$
such that, for any $2$-cut $U$ of $G$, each component of $G- U$ contains a vertex of $C$. Jackson and Yu \cite{Jackson-Yu2002} showed that every circuit graph $(G, C)$ has a $C$-Tutte
cycle through a given edge of $C$ and two other vertices.

\begin{lem}[Jackson and Yu \cite{Jackson-Yu2002}]\label{2vts}
Let $(G,C)$ be a circuit graph, and let $r,z$ be vertices of $G$ and $e\in E(C)$. Then $G$ contains a $C$-Tutte cycle $H$ such that $e\in E(H)$ and $r,z\in V(H)$.
\end{lem}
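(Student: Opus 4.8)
The plan is to prove the statement by induction on $|V(G)|$, proving a slightly more flexible statement to make the induction close: for a circuit graph $(G,C)$, a fixed edge $e\in E(C)$, and any two vertices $r,z\in V(G)$, there is a $C$-Tutte cycle through $e$ containing $r$ and $z$, where in the subpieces produced by the induction I also allow myself to demand that a prescribed vertex be an \emph{endpoint} of a Tutte path. The base case is $G=C$, a chordless cycle, for which $H:=C$ is trivially a $C$-Tutte cycle through $e$ containing every vertex, in particular $r$ and $z$.

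For the inductive step I would first dispose of the case in which $G$ has a $2$-cut $U=\{a,b\}$. By the defining property of a circuit graph every component of $G-U$ meets $C$, and since $C$ is connected this forces $a,b\in V(C)$, so that $a$ and $b$ split $C$ into two arcs and $G$ decomposes into edge-disjoint subgraphs $G_1,G_2$ with $G_1\cap G_2=\{a,b\}$; after possibly adding the edge $ab$, each $G_i$ is again a circuit graph with the induced outer cycle. According to which pieces contain $e$, $r$, and $z$, I would apply the induction hypothesis to each $G_i$ to obtain Tutte paths between $a$ and $b$ (or a Tutte cycle in one piece together with a Tutte path in the other) passing through the prescribed elements that lie in that piece, and then combine the two $a$-to-$b$ paths into a single cycle of $G$. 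This gluing preserves the Tutte condition because every $H$-bridge of $G$ is, up to absorbing $\{a,b\}$, an $(H\cap G_i)$-bridge of one of the pieces, so its attachment count—and in particular the stronger bound of two for bridges meeting $C$—is inherited from the pieces.

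It then remains to treat the $3$-connected case, which is the heart of the argument and follows Tutte's classical bridge/peeling technique. Writing $e=v_1v_2$, I would convert the cycle problem into a path problem—it suffices to route the cycle through $e$, i.e. to find a suitable $C$-Tutte path that uses $e$ and passes through both $r$ and $z$—peel off the $C$-bridge (or the inner face) incident with $e$, apply the induction hypothesis to the strictly smaller circuit graph that remains, and reattach the peeled piece, choosing the local detour so that $r$ and $z$ are each captured either on the retained cycle or along the attached segment. Lemma~\ref{tuttepath} supplies exactly the single-vertex, single-edge routing needed both to initialize and to resolve the peeled pieces, since it already delivers a $C$-Tutte path through a prescribed edge of $C$ to one prescribed (possibly interior) vertex.

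The step I expect to be the main obstacle is forcing the cycle to pass through \emph{both} prescribed vertices $r$ and $z$ at once while maintaining the two Tutte attachment bounds simultaneously—at most three attachments for every $H$-bridge, and at most two for every $H$-bridge containing an edge of $C$. When $r$ and $z$ lie deep in the interior, or fall inside the same bridge during the peeling, a naive route can easily create a bridge with four attachments, or a $C$-meeting bridge with three; ruling this out requires a careful case analysis that separates $r$ and $z$ into distinct inductive pieces and, in the awkward configurations, invokes the strengthened hypothesis that pins a prescribed vertex to a path-endpoint in a subpiece. Making this bookkeeping consistent across all the peeling cases is the crux of the proof.
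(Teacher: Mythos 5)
This statement is not proved in the paper at all: it is quoted verbatim from Jackson and Yu \cite{Jackson-Yu2002}, where its proof constitutes the main technical content of that paper. So there is no internal proof to compare against; what you have written must stand on its own as a proof of the cited result, and it does not. Your proposal is an outline whose two load-bearing steps are missing. First, in the $2$-cut case the induction does not close as you have set it up. If $U=\{a,b\}$ is a $2$-cut (your observation that $a,b\in V(C)$ is correct), and both $r$ and $z$ happen to lie in the piece $G_2$ \emph{not} containing $e$, then to glue you need a Tutte path in $G_2$ with \emph{both} endpoints prescribed (at $a$ and $b$) passing through \emph{both} $r$ and $z$. That is a statement at least as strong as the lemma itself — indeed stronger, since it prescribes two ends and two interior vertices — and your strengthened hypothesis (allowing one prescribed endpoint) does not supply it. Thomassen's result (Lemma~\ref{tuttepath}) prescribes one end on $C$, one arbitrary end, and one edge of $C$, which is still short of what the gluing demands.

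Second, the $3$-connected case is precisely where the real work lies, and you defer it entirely: ``peel off the bridge, reattach, choosing the local detour so that $r$ and $z$ are each captured'' is a description of a goal, not an argument, and you concede as much in your final paragraph. The difficulty you correctly identify — keeping the attachment bounds (three in general, two for bridges meeting $C$) while forcing the cycle through two arbitrary prescribed vertices, which may fall inside the same bridge at every stage of the peeling — is exactly why Jackson and Yu need a carefully crafted system of interlocking inductive statements rather than a direct application of Tutte's classical technique. Acknowledging the crux is not the same as resolving it; as written, the proposal proves the base case and a fragment of the reducible cases, and leaves the theorem's actual content unproved.
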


\section{Quadratic bound}
We start with a technical lemma for finding distinct Hamiltonian cycles.
Recall the link $A_u$ for a vertex $u$ of degree at most $6$ in a plane graph. 
\begin{lem}\label{lem:edge-link}
Let $G$ be a $4$-connected plane graph and $e\in E(G)$. Suppose $u$ is a vertex of degree at most $6$ in $G$ such that $G[N(u)]$ is a cycle and $e\notin E(G[N[u]])$. 
Moreover, assume that if $d(u)\in \{5,6\}$ then 
$\{v\in N(u): uv\notin A_u\}$ is an independent set in $G$, and that if $d(u) = 4$ then there exist two nonadjacent neighbors of $u$ each having degree at least $5$ in $G$. 
Then the following statements hold.

\begin{itemize}
    \item [\textup{(i)}] $G$ has a Hamiltonian cycle through  $e$ as well as an edge in $A_u$.
    \item [\textup{(ii)}] For any $y\in V(G)\backslash\{u\}$ cofacial with $e$ but not incident with $e$, $G-y$ has a Hamiltonian cycle through $e$ and an edge in $A_u$ not incident with $y$.
    \end{itemize}
\end{lem}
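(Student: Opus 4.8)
The plan is to route the desired Hamiltonian cycle through the low-degree ``wheel'' at $u$ so that an edge of $A_u$ is forced locally, while a Tutte-type result forces the passage through $e$ globally. Write $N(u)=\{v_1,\dots,v_d\}$ with $G[N(u)]=C_u:=v_1v_2\cdots v_dv_1$. Since $G$ is $4$-connected, $G-u$ is $3$-connected and $C_u$ is a facial cycle of $G-u$, so $(G-u,C_u)$ is a circuit graph. The engine of the proof is the following upgrade observation: every $C_u$-Tutte cycle (or path) $H'$ in $G-u$ is in fact Hamiltonian. Indeed, if some $H'$-bridge $B$ had an interior vertex and attachment set $A$ with $|A|\le 3$, then in $G$ the set $A\cup\{u\}$ would separate the interior of $B$ from the rest; moreover, if $B$ met $N(u)$ in some interior vertex $v_j$, then $B$ would contain the $C_u$-edge $v_{j-1}v_j$, forcing $|A|\le 2$ by the $C_u$-Tutte condition, so that $A\cup\{u\}$ has size at most $3$. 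Either way we contradict the $4$-connectivity of $G$. (Prescribing vertices on $H'$, as below, keeps $H'$ large enough for this separation to be proper.)

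With this in hand, I would apply Lemma~\ref{2vts} to the circuit graph $(G-u,C_u)$ with the prescribed $C_u$-edge $v_iv_{i+1}$ and the two prescribed vertices taken to be the endpoints of $e$. The upgrade turns the resulting $C_u$-Tutte cycle into a Hamiltonian cycle $H'$ of $G-u$ that uses $v_iv_{i+1}$ and meets both ends of $e$. Reattaching $u$ by replacing the edge $v_iv_{i+1}$ with the path $v_iuv_{i+1}$ yields a Hamiltonian cycle $H$ of $G$ using the two consecutive spokes $uv_i,uv_{i+1}$. When $d(u)\in\{5,6\}$, the hypothesis that $\{v:uv\notin A_u\}$ is independent guarantees that $v_i,v_{i+1}$ cannot both be non-link neighbours, so one of $uv_i,uv_{i+1}$ lies in $A_u$ and $H$ already uses an edge of $A_u$; here $|A_u|\ge\lceil d/2\rceil$, as in Lemma~\ref{lem:Au_size}(i), gives ample room. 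When $d(u)=4$ the link consists of the four edges of $C_u$, and a single reattachment consumes one of them, so I would instead arrange that $H'$ uses two $C_u$-edges and reattach at one of them, leaving the other as the required $A_u$-edge; this is exactly where the two nonadjacent neighbours of degree at least $5$ enter, being used to keep a second boundary edge available on $H'$.

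The step I expect to be the main obstacle is forcing $H$ through the edge $e$ itself rather than merely through its two endpoints: Lemma~\ref{2vts} prescribes one boundary edge and two \emph{vertices}, and passing through the two ends of $e$ does not by itself put $e$ on the cycle. The cleanest fix is to force $e$ from the other side. Since $G$ is $4$-connected, any Tutte cycle of $G$ is already Hamiltonian (an interior bridge with at most three attachments would be a cut of size at most $3$), so one may re-embed $G$ with a face incident to $e$ as the outer face, placing $e$ on the outer cycle, and force $e$ directly through the \emph{edge}-slot of Lemma~\ref{2vts}. The real difficulty is then to reconcile this global forcing of the far edge $e$ with the local forcing of a near-$u$ edge of $A_u$: the two features lie on different faces and cannot both be prescribed as boundary edges at once. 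I would resolve this by performing the wheel surgery at $u$ first (replacing the wheel by an appropriate gadget encoding the use of an $A_u$-edge) so that the reduced graph stays $4$-connected, which is precisely what the link and independence hypotheses are designed to guarantee, in the spirit of Lemma~\ref{4conn}; any Hamiltonian cycle of the reduced graph through $e$ then pulls back to a Hamiltonian cycle of $G$ through $e$ and an edge of $A_u$. Making this reconciliation precise is the crux of the argument.

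Finally, for (ii) I would run the same construction in $G-y$. Because $y$ is cofacial with $e$ but not incident with it, $e$ survives in $G-y$ and a face of $G-y$ still carries $e$ on its boundary, so the Tutte machinery still applies; the only change is that $G-y$ is merely $3$-connected, so the upgrade must be argued through the wheel at $u$, which remains intact provided $y\notin N[u]$ (the case $y\in N(u)$ being handled by rechoosing the reattachment edge). Since $|A_u|\ge 2$ and at most a bounded number of edges of $A_u$ are incident with $y$, I can select the reattachment and link edges to avoid $y$, producing the required Hamiltonian cycle of $G-y$ through $e$ and an edge of $A_u$ not incident with $y$.
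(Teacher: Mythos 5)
Your proposal stops short of a proof at exactly the point you flag. The primary construction---applying Lemma~\ref{2vts} to the circuit graph $(G-u,C_u)$ with a $C_u$-edge in the edge slot and the two \emph{endpoints} of $e$ in the vertex slots---cannot force the cycle through $e$ itself, as you acknowledge; and your proposed repair (do ``wheel surgery'' at $u$ first, then force $e$ through the edge slot of the reduced graph) is precisely the step you leave unresolved, ending with ``making this reconciliation precise is the crux of the argument.'' That crux is the entire content of the lemma, so there is a genuine gap. A secondary problem: when $d(u)=4$ the link $A_u$ is $E(G[N(u)])$, not the spokes at $u$, so your reattachment move (replacing $v_iv_{i+1}$ by $v_iuv_{i+1}$) \emph{destroys} an $A_u$-edge rather than creating one; the ``second boundary edge'' arrangement you gesture at is never constructed, and it is not clear the Tutte machinery lets you prescribe two boundary edges at once.

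For the record, the paper's resolution needs no gadget when $d(u)\in\{5,6\}$: it keeps $e$ in the edge slot of Lemma~\ref{2vts}, applied to a facial cycle of $G$ (or of $G-y$ for part (ii)) containing $e$, and forces the $A_u$-edge by degree counting at $u$ after deleting non-link edges. If $d(u)-|A_u|\le 1$, any Hamiltonian cycle already uses an $A_u$-edge, since it uses two edges at $u$. If $d(u)-|A_u|=2$, delete one non-link edge $ur$ and prescribe $r$ and $u$ in the two vertex slots; the resulting Tutte cycle is Hamiltonian (a bridge with interior would give a $3$-cut of $G$, because $u,r$ lie on the cycle), and at $u$ it must use an $A_u$-edge. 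If $d(u)-|A_u|\ge 3$, independence forces $d(u)=6$, $|A_u|=3$ (cf.\ Lemma~\ref{lem:Au_size}); delete two non-link edges $ur_1,ur_2$, prescribe $r_1,r_2$, and a short bridge analysis (using that $G[N(u)]$ is a cycle and $r_1r_2\notin E(G)$) restores Hamiltonicity. Only for $d(u)=4$ does a gadget appear, and it is concrete: delete $u$, add the diagonal $x_1x_3$ or $x_2x_4$ chosen so that the result $G^*$ is $4$-connected (this is where the two nonadjacent neighbours of degree at least $5$ are used), insert two new vertices $r,z$ into the two triangular faces joined to their respective triangles, prescribe $r,z$ in the vertex slots and $e$ in the edge slot, and then locally modify the resulting Hamiltonian cycle at $r$ and $z$ into a Hamiltonian cycle of $G$ through $u$ and an edge of $G[N(u)]$. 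This deletion-plus-prescription mechanism, and the two-vertex gadget for degree $4$, are the missing ideas your proposal would need to be completed.
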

\begin{proof} 
Let $y\in V(G)\backslash \{u\}$ be cofacial with $e$ but not incident with $e$. Consider a drawing of $G$ in which $y$ is contained in the infinite face of $G-y$. Let $C$ denote the facial cycle of $G$ containing $e$ and $y$, and $C'$ denote the outer cycle of $G-y$.
Then $e\in E(C')$ as $y$ is cofacial with $e$ and not incident with $e$. Since $G$ is $4$-connected, both $(G,C)$ and $(G-y, C')$ are circuit graphs.

\medskip

{\it Case} 1. $d_G(u)\in\{5,6\}$ and $d_G(u)-|A_u|\leq 1$.

By Lemma~\ref{2vts}, $G$ has a $C$-Tutte cycle $D$ through $e$, and $G-y$ has a $C'$-Tutte cycle $D_1$ through $e$. Since $G$ is $4$-connected, $D$ is a Hamiltonian cycle in $G$, and $D_1$ is a Hamiltonian cycle in $G-y$. Since $d_G(u)-|A_u|\leq 1$, both $D$ and $D_1$ contain some edge in $A_u$. Thus, (i) and (ii) hold. 

\medskip

{\it Case} 2. $d_G(u)\in\{5,6\}$ and $d_G(u)-|A_u|=2$.

 Then let $r\in N_G(u)$ such that $ur\notin A_u$, and let $G':=G-ur$.
 
 Let $C_1$ be a facial cycle of $G'$ containing $e$. Since $G$ is $4$-connected, $(G', C_1)$ is a circuit graph. By Lemma~\ref{2vts}, $G'$ has a $C_1$-Tutte cycle $D_1$ through $e$, $r$, and $u$, which is a Hamiltonian cycle in $G$ containing $e$. Since $d_G(u)-|A_u|=2$ and $ur\notin A_u$, $D_1$ must contain an edge in $A_u$. Hence, (i) holds. 
 
 Let $C_2$ be the outer cycle of $G'-y$. Since $G$ is $4$-connected, $(G'-y, C_2)$ is a circuit graph. By Lemma~\ref{2vts}, $G'-y$ has a $C_2$-Tutte cycle $D_2$ through $e$ and every vertex in $\{r,u\}\backslash \{y\}$. Now $D_2$ is a Hamiltonian cycle in $G-y$ containing $e$ as $G$ is $4$-connected. Moreover, $D_2$ contains an edge in $A_u$ as $d_G(u)-|A_u|= 2$ and $ur\notin A_u$. Hence, (ii) holds.
\medskip

{\it Case} 3.  $d_G(u)\in\{5,6\}$ and $d_G(u)-|A_u|\ge 3$.

Since $\{v\in N_G(u): uv\notin A_u\}$ is an independent set in $G$, $|A_u|\ge \lceil d_G(u)/2\rceil=3$ (as $d_G(u)\in\{5,6\}$). Hence, $d_G(u)=6$ and $|A_u|=3$ (as $d_G(u)-|A_u|\ge 3$).
Let $r_1,r_2\in N_G(u)$ such that $ur_1,ur_2\notin A_u$, and let $G'=G-\{ur_1,ur_2\}$. Note that $r_1r_2\notin E(G)$.

Let $C_1$ be a facial cycle of $G'$ containing $e$. Then $(G', C_1)$ is a circuit graph as $G$ is $4$-connected and $d_G(u)=6$. It follows from Lemma~\ref{2vts} that $G'$ has a $C_1$-Tutte cycle $D_1$ through $e$, $r_1$, and $r_2$. If $D_1$ is a Hamiltonian cycle in $G$, then (i) holds, since $D_1$ contains an edge of $A_u$ (because $d_G(u)-|A_u|$=3 and $ur_1,ur_2\notin A_u)$. So suppose $V(G)\backslash V(D_1)\ne \emptyset$. 
Then there exists a $D_1$-bridge of $G'$, say $B$, such that $V(B)\backslash V(D_1)\ne \emptyset$ and $V(B\cap D_1) \le 3$. Observe that $u\in V(B)\backslash V(D_1)$ otherwise, $V(B\cap D_1)$ is a $3$-cut in $G$, a contradiction. Since  $G[N_G(u)]$ is a cycle and $r_1r_2\notin E(G)$, $V(B)\cap V(r_1D_1r_2-\{r_1,r_2\})\ne \emptyset$ and $V(B)\cap V(r_2D_1r_1-\{r_1,r_2\})\ne \emptyset$. Thus, since $|V(B)\cap V(D_1)|\le 3$, we may assume $V(B)\cap V(r_2D_1r_1-\{r_1,r_2\})=\{z\}$. Now since $d_G(u)=6$, $\{u,z\}$ is a $2$-cut in $G$, or $\{u\}\cup (V(B\cap D_1)\backslash \{z\})$ is a $3$-cut in $G$, a contradiction.

Let $C_2$ be the outer cycle of $G'-y$. Since $G$ is $4$-connected and $d_G(u)=6$, $(G'-y, C_2)$ is a circuit graph. By Lemma~\ref{2vts}, $G'-y$ has a $C_2$-Tutte cycle $D_2$ through $e$ and every vertex in $\{r_1,r_2\}\backslash \{y\}$. Similarly, we can show that $D_2$ is a Hamiltonian cycle in $G-y$ containing $e$ as $G$ is $4$-connected and $D_2$ is a $C_2$-Tutte cycle. (In particular, note that if $B$ is a $D_2$-bridge and $V(B)\backslash V(D_2)$ contains a neighbor of $y$, then  $|V(B)\cap V(D_2)|\le 2$.) Moreover, $D_2$ contains an edge in $A_u$ as $|A_u|= 3$ and $ur_1,ur_2\notin A_u$. Hence, (ii) holds.

\medskip

{\it Case} 4. $d_G(u)=4$.

By symmetry, let $G[N_G(u)] = x_1x_2x_3x_4x_1$. By our assumption on $u$, two nonadjacent neighbors of $u$ must each have degree at least $5$ in $G$. Without loss of generality, assume that $d_G(x_{2})\geq 5$ and $d_G(x_4)\geq 5$.

We claim that $(G-u)+x_1x_3$ or $(G-u)+x_2x_4$ is $4$-connected. For, suppose  $(G-u)+x_1x_3$ is not $4$-connected, and let $S$ be a $3$-cut in $(G-u)+x_1x_{3}$. Then $\{x_1, x_{3}\}\subseteq S$, and $S\cup \{u\}$ is a $4$-cut in $G$ separating $x_{2}$ and $x_{4}$. Suppose $G_1, G_2$ are the components of $G-(S\cup \{u\})$ containing $x_2, x_4$, respectively. 
  Since $d_G(x_{2i})\geq 5$ for $i=1,2$, $|V(G_i)|\geq 2$ for $i=1,2$. 
  Let $w_i \in V(G_i) \backslash \{x_{2i}\}$ for $i=1,2$.
  Since $G$ is $4$-connected, there exist a path $Q_i'$ from $w_i$ to $x_1$ in $(G-(S \backslash \{x_1\})) - x_{2i}$ and a path $Q_i''$ from $w_i$ to $x_3$ in $(G-(S \backslash \{x_3\})) - x_{2i}$. 
  Observe that $V(Q_i'\cup Q_i'') \subseteq (V(G_i)\backslash \{x_{2i}\}) \cup\{x_1,x_3\}$. Hence, $G-\{u,x_2,x_4\}$ has two internally disjoint paths between $x_1$ and $x_3$. This implies that $(G-u)+x_2x_4$ is $4$-connected. 

So without loss of generality assume that
$G^* = (G-u)+x_1x_3$ is  $4$-connected and that the edge $x_1 x_3$ is inside the face of $G-u$ bounded by $x_1 x_2 x_3 x_4 x_1$. Let $G'$ be the plane graph obtained from $G^*$ by inserting two vertices $r$ and $z$ into the faces of $G^*$ bounded by $x_1x_2x_3x_1$ and $x_1x_3x_4x_1$, respectively, and then adding edges $r x_i$ for $i=1,2,3$ and $zx_i$ for $i=1,3,4$.

Since $G^*$ is $4$-connected, $(G',C)$ is a circuit graph.
By Lemma~\ref{2vts}, $G'$ has a $C$-Tutte cycle $D'$ containing $e, r$, and $z$, which is a Hamiltonian cycle in $G'$ as $G^*$ is $4$-connected. It is easy to check that $D'$ can be modified at $r$ and $z$ to give a Hamiltonian cycle in $G$ containing $e$ and an edge in $A_u$; so (i) holds. 

To prove (ii), we apply Lemma~\ref{2vts} to the circuit graph $(G'-y, C_1)$, where $C_1$ is the outer cycle of $G'-y$ containing $e$. Then $G'-y$ has a $C_1$-Tutte cycle $D_1$ through $e$, $r$, and $z$. (Note that if $uy\in E(G)$, then $\{r,z\}\cap V(C_1)\ne \emptyset$.) Since $G^*$ is $4$-connected, $D_1$ is a Hamiltonian cycle in $G'-y$.
It is straightforward to check that $D_1$ can be modified to give a Hamiltonian cycle in $G-y$ containing $e$ and an edge in $A_u$ not incident with $y$.
\end{proof}

We now prove that in a $4$-connected planar triangulation on $n$ vertices, any two cofacial edges are contained in $\Omega(n)$ Hamiltonian cycles.

\begin{lem}\label{2edge} 
Let $n$ be an integer with $n\geq 4$, $G$ be a $4$-connected planar triangulation on $n$ vertices, $T$ be a triangle in $G$, and $e_1,e_2\in E(T)$. Then $G$ contains at least $c_1 n$ Hamiltonian cycles through $e_1$ and $e_2$, where $c_1=(12\times 63\times 541 \times 301)^{-1}$.
\end{lem}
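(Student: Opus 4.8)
The plan is to convert the requirement of passing through \emph{two} cofacial edges into passing through a \emph{single} edge, and then to manufacture one Hamiltonian cycle for each vertex of a large, well-spread, low-degree independent set. Write the triangle as $T=abc$ with $e_1=ab$, $e_2=bc$, and third edge $e_3=ca$. A Hamiltonian cycle of $G$ through $e_1$ and $e_2$ is exactly one containing the path $a$-$b$-$c$, and contracting this path to the edge $ca$ sets up a bijection with the Hamiltonian cycles of $G-b$ through $ca$. This is precisely what Lemma~\ref{lem:edge-link}(ii) delivers, applied with $e=e_3=ca$ and $y=b$: the vertex $b$ is cofacial with $ca$ (it is the opposite vertex of the face $T$) and is not incident with it, so for each admissible $u$ the lemma produces a Hamiltonian cycle of $G-b$ through $ca$ and through an edge $\alpha_u\in A_u$ \emph{not incident with $b$}; lifting it (replacing $ca$ by $c$-$b$-$a$) yields a Hamiltonian cycle $H_u$ of $G$ through $e_1,e_2$ that still contains $\alpha_u$ (note $\alpha_u\neq ca$ since $e_3\notin E(G[N[u]])$).

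First I would produce the index set. By Euler's formula $\sum_v d(v)=6n-12$, so more than $n/3$ vertices have degree at most $6$; the graph they induce is planar, hence $4$-colorable, and therefore contains an independent set of size exceeding $n/12$. From it I discard the few vertices $u$ with $e_3\in E(G[N[u]])$ (namely $a,c$ and the at most two common neighbors of $a$ and $c$) together with the degree-$4$ vertices failing the hypothesis of Lemma~\ref{lem:edge-link} (a degree-$4$ vertex fails only when it has two degree-$4$ neighbors on different diagonals of its link). The remaining independent set $I$ has each member $u$ meeting the hypotheses of Lemma~\ref{lem:edge-link}: $G[N(u)]$ is a cycle (as $G$ is a triangulation), $e_3\notin E(G[N[u]])$, and the link condition holds—by Lemma~\ref{lem:Au_size}(i) when $d(u)\in\{5,6\}$, and by the existence of two nonadjacent neighbors of degree at least $5$ when $d(u)=4$.

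Next I would apply the dichotomy of Lemma~\ref{2cases} to $I$ with $t=7$. In the principal case~(ii) there is $S\subseteq I$ with
\[
|S|>\frac{|I|}{7\times 9\times 541\times 301}=\frac{|I|}{63\times 541\times 301}>c_1 n ,
\]
which saturates no $4$-, $5$-, or diamond-$6$-cycle. Because $S$ saturates no $4$-cycle, Lemma~\ref{lem:Au_size}(ii) makes the sets $E(G[N[u]])$, and hence the links $A_u$, pairwise disjoint over $u\in S$, so each $\alpha_u$ is an edge ``owned'' by a single vertex of $S$. Running the construction of the first paragraph for every $u\in S$ yields cycles $H_u$ through $e_1,e_2$ with $\alpha_u\in A_u\cap E(H_u)$. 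It then remains to see that $u\mapsto H_u$ is injective, which I would secure by arranging $H_u$ to differ from a fixed reference only inside the disc bounded by $G[N[u]]$; since these discs are disjoint, $H_u=H_{u'}$ would force $u=u'$, giving at least $|S|>c_1 n$ distinct Hamiltonian cycles through $e_1$ and $e_2$.

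The two difficult points are case~(i) and the injectivity just invoked. In case~(i) some pair $v,x$ has at least $7$ common neighbors in $I$; together with $v,x$ these cut $G$ into at most seven regions by four-cycles $v w_i x w_{i+1}$, so by pigeonhole one region carries $\Omega(n)$ vertices, and I would produce the cycles by fixing the traversal on the side containing $T$ and rerouting inside this large region in $\Omega(n)$ ways via the circuit-graph Tutte machinery (Lemma~\ref{2vts}); controlling the nested separating four-cycles so the results stay Hamiltonian through $e_1,e_2$ is the obstacle there, and this is also where a degree-$4$-rich, double-wheel-like graph is handled. The subtlest issue overall is distinctness: knowing only that $H_u$ contains its uniquely owned edge $\alpha_u$ does \emph{not} preclude $H_u=H_{u'}$, since a single cycle may carry many owned edges, so one must genuinely realize each $H_u$ as a modification confined to the pairwise-disjoint neighborhoods $N[u]$ in order to force injectivity and thereby obtain $c_1 n$ cycles without conceding a further constant factor.
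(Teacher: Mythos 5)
Your Case~(ii) skeleton (delete $y=b$, apply Lemma~\ref{lem:edge-link}(ii) to get a cycle of $G-b$ through $ca$ and an owned edge, lift it back, and feed a large low-degree independent set into Lemma~\ref{2cases}) is indeed the paper's skeleton, but the two points you yourself flag as ``difficult'' are genuine gaps, and neither is closed by what you propose. The main one is distinctness. Lemma~\ref{lem:edge-link} gives no control whatsoever on the cycle outside $N[u]$, and there is no fixed reference cycle to ``modify inside the disc bounded by $G[N[u]]$,'' so your injectivity fix is not available. The paper's actual mechanism is different and is the heart of its proof: order $S'=\{u_1,\dots,u_k\}$ and iterate with edge deletion. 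Having found $H_1,\dots,H_{j-1}$ with owned edges $f_1,\dots,f_{j-1}$ ($f_i\in A_{u_i}$), apply Lemma~\ref{lem:edge-link}(ii) inside $X_j=G-\{f_1,\dots,f_{j-1}\}$, which is still $4$-connected by Lemma~\ref{4conn} --- this is exactly what the no-saturated-$4$-cycle/$5$-cycle/diamond-$6$-cycle hypotheses are for. The new cycle $H_j$ contains $f_j$ but, being a subgraph of $X_j$, avoids all of $f_1,\dots,f_{j-1}$; hence the cycles are pairwise distinct for free, and a last Tutte cycle in $X_{k+1}$ supplies one extra. Without this iteration (or a substitute), your construction shows only that the cycles $H_u$ exist, not that $c_1n$ of them are distinct.

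The second gap is that your proposal contains no induction on $n$, while the paper's proof is an induction used in two essential places. In case~(i) of Lemma~\ref{2cases}, the paper finds two separating $4$-cycles with disjoint interiors, contracts the interior of one, say $D_1$, with $|V(\overline{D_1}-D_1)|\le n/2$, applies the induction hypothesis to the contracted triangulation to get at least $c_1n/2$ cycles through $e_1,e_2$, and lifts each in at least two ways via Lemma~\ref{lem:2Hamiltonpaths}; your alternative (rerouting in $\Omega(n)$ ways via Lemma~\ref{2vts}) is precisely the obstacle you admit, and Tutte-path machinery by itself produces one cycle per application, not $\Omega(n)$ distinct ones. Moreover, before the case split the paper performs a preliminary reduction, again by induction: contract any adjacent pair of degree-$4$ vertices avoiding $e_3$ and $y$. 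This reduction is what later guarantees both that Lemma~\ref{lem:2Hamiltonpaths} applies to the interior of $D_1$ in case~(i), and that the hypothesis of Lemma~\ref{lem:Au_size}(i) holds for the degree-$5$ and degree-$6$ members of $S$. Your proposal screens only the degree-$4$ members of $S$ and silently invokes Lemma~\ref{lem:Au_size}(i) for the others; but its hypothesis (no two degree-$4$ neighbors of $u$ are adjacent) can fail --- a degree-$6$ vertex can be a common neighbor of an adjacent degree-$4$ pair, and there exist $4$-connected triangulations with $\Omega(n)$ such configurations --- so Lemma~\ref{lem:edge-link} need not apply to those $u$ at all. Finally, a smaller accounting slip: after discarding up to four vertices of $I$, Lemma~\ref{2cases} gives only $|S|>c_1n-O(1)$ rather than $|S|>c_1n$; the paper keeps the constant by excluding $V(T)$ before applying the Four Color Theorem and by the extra ``$+1$'' cycle produced at the last step of the iteration.
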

\begin{proof}
We apply induction on $n$. 
Since  $G$ is a $4$-connected plane graph and $e_1, e_2$ are cofacial in $G$, it follows from Lemma~\ref{tuttepath} that $G$ has a Hamiltonian cycle through $e_1$ and $e_2$.
So the assertion holds when $n\leq 1/c_1$.
Now assume $n> 1/c_1$ and the assertion holds for $4$-connected planar triangulations on fewer than $n$ vertices.

Consider a drawing of $G$ in which $T$ is its outer cycle. Let $y\in V(T)$ be  incident with both $e_1$ and $e_2$, and let $e_3$ be the edge in $E(T)\backslash \{e_1, e_2\}$.

We may assume that if there exist two adjacent vertices $u_1,u_2$ in $G$ with $d_G(u_1) = d_G(u_2) = 4$, then $u_1u_2=e_3$ or $y\in \{u_1,u_2\}$.
For, suppose there exist $u_1,u_2\in V(G)\backslash\{y\}$ such that $d_G(u_1)=d_G(u_2)=4$ and $u_1u_2\neq e_3$. We contract the edge $u_1u_2$ to obtain a planar triangulation $G^*$ on $n-1$ vertices. (We retain the edges $e_1$ and $e_2$.) Note that $G^*$ is  $4$-connected (as $n> 1/c_1>6$) and $T$ is a triangle in $G^*$. So by induction, $G^*$ has $c_1(n-1)$ Hamiltonian cycles through $e_1$ and $e_2$. Observe that all such cycles in $G^*$ can be modified to give $c_1(n-1)$ distinct Hamiltonian cycles in $G$ through the edges $e_1,e_2,$ and $u_1 u_2$. Therefore, it suffices to show that $G$ has a Hamiltonian cycle through $e_1$ and $e_2$ but not $u_1u_2$, as $c_1(n-1)+1\geq c_1n$. So let $C_1$ denote the outer cycle of $G_1:=(G-y)-u_1u_2$. Observe that $e_3\in E(C_1)$ and $(G_1, C_1)$ is a circuit graph as $G$ is $4$-connected and planar. By Lemma~\ref{2vts}, $G_1$ contains a $C_1$-Tutte cycle $H_1$ through $e_3, u_1$, and $u_2$. Moreover, $H_1$ is a Hamiltonian cycle in $G_1$ (since $G$ is $4$-connected). Therefore, $(H_1- e_3)+ \{y,e_1,e_2\}$ is a Hamiltonian cycle in $G$ through $e_1, e_2$ and avoiding $u_1u_2$.

Since $G$ has minimum degree at least $4$ and $|E(G)|=3n-6$ by Euler's formula, we have 
\begin{align*}
    2(3n-6) = 2|E(G)| & = \displaystyle\sum_{\{v\in V(G):4\leq d(v)\leq 6\}} d(v) + \displaystyle\sum_{\{v\in V(G): d(v)\geq 7\}} d(v) \\
    & \geq 4|\{v \in V(G):d(v)\leq 6\}| + 7(n-|\{v \in V(G):d(v)\leq 6\}|).
\end{align*}
It follows that $|\{v \in V(G):d(v)\leq 6\}| \geq n/3 +4$. By the Four Color Theorem, there exists an independent set $I$ of vertices of degree at most 6 in $G$ with $I\cap V(T)=\emptyset$ and $|I|\geq (n/3+4-3)/4 \geq n/12$.  By Lemma~\ref{2cases} (with $t=7$), either there exist distinct $v, x \in V(G)$ such that $|N(v)\cap N(x)\cap I|\geq 7$, or there is a subset $S\subseteq I$ such that $|S|> c_1 n$ and $S$ saturates no $4$-cycle, or $5$-cycle, or diamond-$6$-cycle in $G$. Moreover, $S\cap V(T)=\emptyset$ as $I\cap V(T)=\emptyset$.

\medskip

{\it Case} 1. There exist distinct $v, x \in V(G)$ such that $|N(v)\cap N(x)\cap I|\geq 7$.

Recall that any two adjacent degree $4$ vertices of $G$ cannot be contained in $V(G)\backslash V(T)$.
Since $|N(v)\cap N(x)\cap I|\geq 7$, $G$ has at least two separating $4$-cycles $D_1$ and $D_2$, such that $|V(\overline{D_i})|\geq 6$ for $i=1,2$, and $\overline{D_1}-{D_1}$ and $\overline{D_2}-{D_2}$ are disjoint. Without loss of generality, we may assume  $|V(\overline{D_1}-{D_1})|\leq n/2$. By our assumptions on $G$ and applying Lemma~\ref{lem:2Hamiltonpaths}, we see that $\overline {D_1}-(V(D_1)\backslash \{a,b\})$ has at least two Hamiltonian paths between $a$ and $b$ for any distinct $a, b\in V(D_1)$.

Let $G_1^*$ be obtained from $G$ by contracting $\overline{D_1}-D_1$ to a new vertex $v_1$. Observe that $G_1^*$ is a $4$-connected planar triangulation with outer cycle $T$. It follows by induction that $G_1^*$ has at least $c_1(n-|V(\overline{D_1}-D_1)|+1)\geq c_1n/2$ Hamiltonian cycles through $e_1$ and $e_2$. 
For each of such Hamiltonian cycles in $G_1^*$, say $H^*$, let $a_1,b_1\in N_{G_1^*}(v_1)$ such that $a_1v_1b_1\subseteq H^*$. 
We can then form a Hamiltonian cycle in $G$ through $e_1$ and $e_2$  by taking the union of $H^*-v_1$ and a Hamiltonian path between $a_1$ and $b_1$ in $\overline{D_1}-(V(D_1)\backslash \{a_1,b_1\})$. Thus $G$ has at least $2(c_1n/2)=c_1n$ Hamiltonian cycles through $e_1$ and $e_2$.

\medskip

{\it Case} 2. There is an independent set $S$ of vertices of degree at most $6$ in $G$ such that $|S|> c_1 n$, $S\cap V(T) = \emptyset$, and 
$S$ saturates no $4$-cycle, or $5$-cycle, or diamond-$6$-cycle in $G$. 


 If there exist distinct $u_1,u_2\in S$ such that $|N_G(u_i)\cap V(T)|\geq 2$ for $i\in [2]$, then $u_1,u_2$ are contained in a $4$-cycle or a $5$-cycle in $G$, a contradiction. Hence, at most one vertex in $S$, say $x$, is adjacent to two vertices in $V(T)$. Let $S' = S$ if $x$ does not exist, and $S'= S\backslash \{x\}$ if $x$ exists. 
 Hence, $|S'|\ge |S|-1$ and for all $u\in S'$, $|N_G(u)\cap V(T)|\le 1$.
 
 Next we show that $S'$ satisfies the conditions of Lemma~\ref{lem:Au_size} and Lemma~\ref{4conn}.
 First suppose $d_G(y)> 4$. If two degree $4$ vertices are adjacent in $G$, they must be the two vertices in $V(T)\backslash\{y\}$. Hence, for any $u\in S'$, since $|N_G(u)\cap V(T)|\leq 1$, if $d_G(u)=4$ then $u$ is not adjacent to a degree $4$ vertex in $G$, and if $d_G(u)\in \{5,6\}$ then no degree $4$ neighbors of $u$ are adjacent in $G$. Now assume that $d_G(y)=4$. Notice that for any $v\in N_G(y)$, $|N_G(v)\cap V(T)|=2$, and thus, 
 $N_G(y)\cap S'=\emptyset$. Hence, for any $u\in S'$, if $d_G(u)=4$ then $u$ is adjacent to no degree 4 vertex in $G$, and if $d_G(u)\in \{5,6\}$ then no degree $4$ neighbors of $u$ are adjacent in $G$. Therefore, $S'$ satisfies the conditions of Lemma \ref{lem:Au_size} and Lemma~\ref{4conn}.

Let $k:=|S'|$ and $S'=\{u_1,u_2,\ldots, u_k\}$. Recall the definition of $A_{u_i}$ for $i\in [k]$,
and let
$A_i:=A_{u_i}\backslash \{ e\in E(G): \text{$e$ is incident with $y$}\}.$
By Lemma~\ref{lem:Au_size}, $E( G[N_G[u_i]]) \cap E(G[ N_G[u_j]])=\emptyset$ for $i\neq j$, and if $d_G(u_i)\in \{5,6\}$, $\{v\in N_G(u_i): vu_i\notin A_{u_i}\}$ is independent in $G$; so $|A_{u_i}|\ge 3$ for all $u_i\in S'$.
Hence, $|A_i|\geq 2$ for all $i\in[k]$.
Note that $e_3\notin E( G[N_G[u_i]])$, as $S'\cap V(T)=\emptyset$ and $|N_G(u_i) \cap V(T)|\leq 1$. 
 We now find $k+1 > c_1n $ Hamiltonian cycles $H_1, \ldots, H_{k+1}$ in $G$, as follows.

Let $F_0 = \emptyset$ and $X_1:=G-F_0=G$. Note that $e_3\notin E(G[N_G[u_1]])$ and by Lemma~\ref{lem:Au_size}, $u_1$ satisfies the conditions of Lemma~\ref{lem:edge-link} (with $u_1,e_3,X_1$ as $u, e, G$ in Lemma~\ref{lem:edge-link}, respectively). Since $y$ is cofacial with $e_3$ but not incident with $e_3$, it follows from (ii) of Lemma~\ref{lem:edge-link} that $X_1-y=G-y$ has a Hamiltonian cycle $D_1$ through $e_3$ and an edge $f_1\in A_1$. Hence, $H_1=(D_1-e_3)+ \{y, e_1,e_2\}$ is a Hamiltonian cycle in $G$ through $e_1,e_2,$ and $f_1\in A_1$. Set $F_1=\{f_1\}$.

Suppose for some $j\in [k+1]$ ($j\geq 2$) we have found an edge set $F_{j-1}=\{ f_1, \ldots, f_{j-1}\}$ where $f_i\in A_i$ for each $i \in [j-1]$, and a Hamiltonian cycle $H_l$ in $X_{l}:=G-F_{l-1}$ for each $l\in [j-1]$, such that $\{e_1, e_2, f_l\} \subseteq E(H_l)$ and $F_{l-1} \cap E(H_{l}) = \emptyset$. 
Consider the graph $X_{j}:=G-F_{j-1}$. By Lemma~\ref{4conn}, $X_{j}$ is $4$-connected. When $j=k+1$, $X_{k+1}:=G-F_{k}$ is $4$-connected; so by Tutte's theorem, $X_{k+1}$ has a Hamiltonian cycle $H_{k+1}$ through $e_1$ and $e_2$. We stop this process and output the desired $H_1, \ldots, H_{k+1}$.
Now suppose $j\leq k$.
Note that $G[N_G[u_j]]$ is a subgraph of $X_j$ (as $E(G[N_G[u_j]])\cap E(G[N_G[u_l]])=\emptyset$ for any $l\in [j-1]$), and that $e_3\in E(X_j)\backslash E( G[N_G[u_j]])$. We now show that $u_j$ satisfies the conditions of Lemma~\ref{lem:edge-link} (with $u_j, e_3, X_j$ as $u, e, G$ in Lemma~\ref{lem:edge-link}, respectively). Since $u_j\in S'$, $X_j-f$ is $4$-connected for any $f\in A_{u_j}$ (by Lemma~\ref{4conn}), and the link of $u_j$ in $X_j$ is same as $A_{u_j}$, as $G[N_G[u_j]]\subseteq X_i \subseteq G$. Hence, if $d_{X_j}(u_j)=d_G(u_j)\in \{5,6\}$, by Lemma~\ref{lem:Au_size}, 
$\{v\in N_{x_j}(u_j): vu_j\notin A_{u_j}\}=\{v\in N_G(u_j): vu_j\notin A_{u_j}\}$ is independent in $G$ (also independent in $X_j$);
if $d_{X_j}(u_j)=d_G(u_j)=4$, then all neighbors of $u_j$ each have degree at least $5$ in $X_j$, as $A_{u_j}=E(G[N_G(u_j)])$ and $X_j-f$ is $4$-connected for any $f\in A_{u_j}$. Therefore, by (ii) of Lemma~\ref{lem:edge-link}, $X_j-y$ has a Hamiltonian cycle $D_j$ through $e_3$ and some edge $f_j\in A_j$. Now $H_j=(D_j-e_3)+\{y,e_1,e_2\}$ is a Hamiltonian cycle in $G$ such that $\{e_1,e_2,f_j\}\subseteq E(H_j)$. Note $F_{j-1}\cap E(H_j)=\emptyset$ as $D_j\subseteq X_j$. Set $F_j=F_{j-1}\cup \{f_j\}$.

 Therefore, $G$ has at least $k+1=|S'|+1> c_1 n$ Hamiltonian cycles through $e_1, e_2$.
\end{proof}

\begin{proof}[Proof of Theorem~\ref{main1}] Let $c_2:=(12\times 90\times 541 \times 301)^{-1}$ and $c=c_2^2/2$. We show that every $4$-connected planar triangulation on $n$ vertices has at least $c n^2$ Hamiltonian cycles. It is easy to check that the assertion holds when $n\leq 1/ \sqrt{c}=\sqrt{2}/{c_2}$ as every $4$-connected planar graph is Hamiltonian by Tutte's theorem.
Hence we may assume that $n>\sqrt{2}/{c_2}$ and that the assertion holds for $4$-connected planar triangulations on fewer than $n$ vertices.

\medskip

{\it Case} 1. $G$ contains two adjacent vertices of degree $4$.

Let $u_1, u_2 \in V(G)$ such that $u_1 u_2 \in E(G)$ and $d_G(u_1) = d_G(u_2) = 4$.
Let $G^*$ be the graph obtained from $G$ by contracting the edge $u_1u_2$ to a new vertex $u^*$. By induction, $G^*$ has at least $c (n-1)^2$ Hamiltonian cycles from which we obtain at least $ c(n-1)^2$ Hamiltonian cycles in $G$ through the edge $u_1u_2$. 

Let $x_1,x_2,x_3,x_4$ be the vertices that occur on $G[N_{G^*}(u^*)]$ in the clockwise order such that $N_G(u_1)\cap N_G(u_2)=\{x_2,x_4\}$. Consider $G^*-u^*$. Note that  $G': = (G^*-u^*)+x_1x_3$ or $G'':=(G^*-u^*)+x_2x_4$ is a $4$-connected planar triangulation on $n-2$ vertices. (We add the edge $x_1 x_3$ or $x_2 x_4$ inside the face of $G^* - u^*$ containing $u^*$.)

First, suppose $G'$ is a $4$-connected planar triangulation. Note that $x_1x_2x_3x_1, x_1x_4x_3x_1$ are two triangles in $G'$. By Lemma~\ref{2edge}, $G'$ has at least $c_1(n-2)$ Hamiltonian cycles through $x_1x_{2i}$ and $x_{2i}x_3$ for each $i\in [2]$. Note that if $H$ is a Hamiltonian cycle in $G'$ through $x_1x_{2i}$ and $x_{2i}x_3$, then $(H- x_{2i}) \cup x_1u_1x_{2i}u_2x_3$ is a Hamiltonian cycle in $G-u_1 u_2$. Therefore, $G$ contains at least $2c_1(n-2)$ Hamiltonian cycles all  avoiding the edge $u_1u_2$. Hence, there exist at least $c (n-1)^2+2c_1(n-2)\geq c n^2$ Hamiltonian cycles in $G$ (as $n>\sqrt{2}/{c_2}$). 

Now assume that $G''$ is a $4$-connected planar triangulation. By Lemma~\ref{2edge}, for each $i\in [2]$, $G''$ contains at least $c_1(n-2)$ Hamiltonian cycles through $x_1x_{2i}$ and $x_2x_4$ (since $x_1x_2x_4x_1$ is a triangle in $G''$). Thus, $G^*$ has at least $2c_1(n-2)$ Hamiltonian cycles  through $x_2u^*, x_4u^*$. For any such cycle $H^*$, $(H^*-u^*) \cup x_2u_1u_2x_4$ and $(H^*-u^*) \cup x_2u_2u_1x_4$ are two distinct Hamiltonian cycles in $G$. Since $G^*$ has at least $c(n-1)^2$ Hamiltonian cycles among which at least $2c_1(n-2)$ such cycles contain $x_2u^*x_4$, it follows that $G$ has at least $c (n-1)^2+2c_1(n-2)\geq c n^2$ Hamiltonian cycles. 

\medskip

{\it Case} 2. No two vertices of degree $4$ in $G$ are adjacent.

Recall that $G$ contains an independent set $I$ of vertices of degree at most $6$ with $|I|\geq n/12$. By Lemma~\ref{2cases} (with $t=10$),  either there exist distinct vertices $v, x\in V(G)$ such that $|N(v)\cap N(x)\cap I|\geq 10$, or $G$ contains  $S\subseteq I$ such that  $|S|\geq c_2n$ and $S$ saturates no $4$-cycle, or $5$-cycle, or diamond-$6$-cycle in $G$. 

Suppose the former case holds. Since no two vertices of degree $4$ in $G$ are  adjacent, we can find a  separating $4$-cycle $D$ such that
$1<|V(\overline{D}-D)|\leq n/4$.
We contract $\overline{D}-D$
to a vertex and denote this new graph by $G_0$. Note that $G_0$ is a $4$-connected planar triangulation with $3n/4\leq |V(G_0)|=n-|V(\overline{D}-D)|+1\leq n-1$; so $G_0$ has at least $c(3n/4)^2$ Hamiltonian cycles by induction. Therefore, $G$ has at least $2c(3n/4)^2\geq c n^2$ Hamiltonian cycles, as  by Lemma~\ref{lem:2Hamiltonpaths}, $\overline{D}-(V(D)\backslash \{a,b\})$ has at least two Hamiltonian paths between $a$ and $b$ for any distinct $a,b\in V(D)$. 

Now assume that there exists an independent set $S$ of vertices of degree at most $6$ in $G$ with $|S|\geq c_2n$ such that $S$ saturates no $4$-cycle, or $5$-cycle, or diamond-$6$-cycle in $G$. By our assumptions on $G$, $S$ satisfies the conditions in Lemma \ref{lem:Au_size} and Lemma \ref{4conn}. Let $S=\{u_1, u_2 \ldots , u_k\}$. Recall the definition of $A_{u_i}$, the link of $u_i$ for each $i\in [k]$.

Let $F=\{f_1, f_2,\ldots, f_k\}$, where $f_i\in A_{u_i}$ for $i\in [k]$. Let $F_j=\{f_1,\ldots, f_j\}$ for each $j \in [k]$ and let $F_0=\emptyset$.

We claim that for each integer $j\in [k]$, there exists a collection of Hamiltonian cycles in $G$, say ${\cal C}_j$,
such that $|{\cal C}_j|=k-j+1$ and every cycle in ${\cal C}_j$ contains $f_j$ but no edge from $F_{j-1}$.
For each $j\in [k]$, let $X_j:=G-F_{j-1}$. By Lemma~\ref{4conn}, $X_j$ is $4$-connected for each $j$. If $j=k$, it follows from Lemma~\ref{tuttepath} that $X_k$ has a Hamiltonian cycle $H_{k+1}^{(k)}$ through the edge $f_k$. Moreover, $F_{k-1}\cap E(H_{k+1}^{(k)})=\emptyset$ as $H_{k+1}^{(k)}\subseteq X_k$. Let ${\cal C}_k=\{H_{k+1}^{(k)}\}$.

Now assume $j<k$. Let $F_j^{(j)}=\emptyset$ and $Y_{j+1}^{(j)}:=X_j-F_j^{(j)}=X_j$. Note that $f_j\in X_j$ as $X_j=G-F_{j-1}$. 
Note that $u_{j+1}$ satisfies the conditions in Lemma~\ref{lem:edge-link} (with $u_{j+1}, f_j, X_j$ as $u, e, G$ in Lemma~\ref{lem:edge-link}, respectively), since $u_{j+1}\in S$, $S$ satisfies the conditions in Lemmas~\ref{lem:Au_size} and \ref{4conn}, and $F_{j-1}\subseteq \cup _{i=1}^{j-1} A_{u_i}$ if $j\ge 1$. Then by (i) of Lemma~\ref{lem:edge-link}, $X_j$ has a Hamiltonian cycle $H_{j+1}^{(j)}$ containing $f_j$ and some edge $f_{j+1}^{(j)}\in A_{u_{j+1}}$. Set $F_{j+1}^{(j)}=\{f_{j+1}^{(j)}\}$.
For $j+2\leq l \leq k+1$,  suppose we have found an edge set $F_{l-1}^{(j)}=\{f_{j+1}^{(j)}, \ldots, f_{l-1}^{(j)}\}$, where $f_{t}^{(j)}\in A_{u_t}$ for $j+1\leq t\leq l-1$, such that, for each $j+1\leq t\leq l-1$, $Y_t^{(j)}:=X_j-F_{t-1}^{(j)}$ is $4$-connected and has a Hamiltonian cycle $H_t^{(j)}$ through $f_j$ and $f_t^{(j)}$. Consider $Y_l^{(j)}:=X_j-F_{l-1}^{(j)}$. Then $Y_l^{(j)}$ is $4$-connected by Lemma~\ref{4conn}. If $l=k+1$, then, by Lemma~\ref{tuttepath}, $Y_{k+1}^{(j)}:=X_j-F_{k}^{(j)}$ has a Hamiltonian cycle $H_{k+1}^{(j)}$ through $f_j$ and $F_{k}^{(j)}\cap H_{k+1}^{(j)}=\emptyset$. We stop the process and output the desired ${\cal C}_j=\{H_{j+1}^{(j)}, H_{j+2}^{(j)}, \ldots, H_{k+1}^{(j)}\}$. Now assume that $l< k+1$. Then $G[N_G[u_{l}]]$ is a subgraph of $Y_{l}^{(j)}$, and $u_{l}$ in $Y_{l}^{(j)}$ satisfies the conditions in Lemma \ref{lem:edge-link}. Since $f_j\in E(Y_{l}^{(j)})\backslash E(G[N_G[u_{l}]])$, we apply (i) of Lemma~\ref{lem:edge-link} to find a Hamiltonian cycle $H_{l}^{(j)}$ in $Y_{l}^{(j)}$ through $f_j$ and an edge $f_l^{(j)}$ in $A_{u_{l}}$. Set $F_l^{(j)}=F_{l-1}^{(j)}\cup \{f_l^{(j)}\}$.

Hence, by the above claim, the number of Hamiltonian cycles in $G$ is at least 
$\sum_{j=1}^k |\mathcal{C}_j| \geq \sum_{j=1}^k (k+1-j)=k(k+1)/2> c_2^2 n^2/2=c n^2.$
\end{proof}

\section{Restricting degree 4 vertices}\label{sec:restrict_degree}
In this section, we prove Theorem~\ref{main2}. We first show several lemmas.

\begin{lem}\label{pst:2-4cycles}
Let $G$ be a $4$-connected planar triangulation.
Let $S$ be an independent set in $G$ saturating no $4$-cycle or $5$-cycle in $G$. Let $u,u'\in S$ be distinct and let $D_u$ and $D_{u'}$ be $4$-cycles containing $u$ and $u'$, respectively. Then $|V(D_u)\cap V(D_{u'})|\leq 2$, $V(D_u)\cap V(D_{u'})\cap S=\emptyset$, and if $V(D_u)\cap V(D_{u'})$ consists of two vertices, say $a$ and $b$,  then $a b\in E(D_u)\cap E(D_{u'})$.
\end{lem}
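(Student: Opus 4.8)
The plan is to analyze the intersection $V(D_u) \cap V(D_{u'})$ of two $4$-cycles through the members $u, u'$ of the independent set $S$, using the saturation hypotheses to bound and structure this intersection. The key fact I will exploit repeatedly is that $S$ is independent, so $u \neq u'$ are nonadjacent, and that $S$ saturates no $4$-cycle or $5$-cycle, meaning no $4$-cycle or $5$-cycle in $G$ contains two vertices of $S$.

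First I would establish $V(D_u) \cap V(D_{u'}) \cap S = \emptyset$. Suppose some $w \in V(D_u) \cap V(D_{u'}) \cap S$. Since $u \in V(D_u)$ and $w \in V(D_u)$, and $D_u$ is a $4$-cycle, if $w \neq u$ then $D_u$ is a $4$-cycle containing the two distinct vertices $u, w$ of $S$, contradicting that $S$ saturates no $4$-cycle. Hence $w = u$; but symmetrically, applying the same argument to $D_{u'}$ forces $w = u'$, whence $u = u'$, a contradiction. This handles the second claim.

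Next I would bound $|V(D_u) \cap V(D_{u'})| \le 2$. I would argue by contradiction: if the two $4$-cycles shared three or more vertices, then since each has only four vertices, their union would be small, and I would look for a $4$-cycle or $5$-cycle inside $\overline{D_u} \cup \overline{D_{u'}}$ (or the subgraph induced by $V(D_u) \cup V(D_{u'})$) that simultaneously contains $u$ and $u'$. Concretely, three shared vertices among two $4$-cycles means $V(D_u) \cup V(D_{u'})$ has at most five vertices, namely $u, u'$ together with the three common vertices; these five vertices, together with the edges of $D_u$ and $D_{u'}$, should be shown to contain a $4$-cycle or $5$-cycle through both $u$ and $u'$, contradicting the saturation hypothesis. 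The careful enumeration of which of the three common vertices are adjacent to $u$ versus $u'$ on their respective cycles is where I expect the casework to live.

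Finally, when $|V(D_u) \cap V(D_{u'})| = 2$, say the common vertices are $a$ and $b$, I would show $ab \in E(D_u) \cap E(D_{u'})$, i.e.\ $a$ and $b$ are consecutive on \emph{both} cycles. If, say, $a$ and $b$ are \emph{not} adjacent on $D_u$, then they are the two ``diagonal'' vertices of the $4$-cycle $D_u = u\, a\, u''\, b\, u$ for some vertex $u''$, so $a$ and $b$ are both neighbors of $u$; combining with the placement of $a, b$ on $D_{u'}$ (again using that neither equals $u$ or $u'$ by the first part) I would assemble a short cycle through $u$ and $u'$ of length at most five, again contradicting saturation. \textbf{The main obstacle} I anticipate is the systematic case analysis in the last two parts: I must track, for each of $a, b$ and each cycle, whether the shared vertex is adjacent to or opposite the distinguished vertex ($u$ or $u'$), and then verify that some cycle of length $4$ or $5$ always materializes; planarity and the fact that $G$ is a triangulation (so the interiors $\overline{D_u}, \overline{D_{u'}}$ are well-behaved and the cycles bound discs) should be what guarantees these short cycles actually exist as subgraphs rather than merely as abstract vertex sets.
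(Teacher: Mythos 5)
Your proposal is correct and follows essentially the same route as the paper's proof: independence plus the no-saturated-$4$-cycle hypothesis gives $u'\notin V(D_u)$ and $u\notin V(D_{u'})$, hence $V(D_u)\cap V(D_{u'})\cap S=\emptyset$, and each remaining claim is settled by exhibiting a $4$- or $5$-cycle through both $u$ and $u'$ built solely from edges of $D_u\cup D_{u'}$. The casework you defer does close in every configuration without any appeal to planarity, and your expectation that $5$-cycles (not only $4$-cycles) are needed is accurate --- indeed more careful than the paper itself, whose proof of $|V(D_u)\cap V(D_{u'})|\le 2$ cites only a $4$-cycle, although two of the three possible placements of $u'$ on $D_{u'}$ yield only a $5$-cycle through $u,u'$ from the edges of the two given cycles.
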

\begin{proof}
 Note that $u'\notin V(D_u)$ since $\{u,u'\}$ saturates no $4$-cycle in $G$. Similarly, $u\notin V(D_{u'})$.   So $V(D_u)\cap V(D_{u'})\cap S=\emptyset$. Moreover,  $|V(D_u)\cap V(D_{u'})|\le 2$ since otherwise $u,u'$ are contained in a $4$-cycle in $G[D_u+u']$, contradicting the assumption that $S$ saturates no $4$-cycle in $G$.

Now suppose $V(D_u)\cap V(D_{u'})=\{a,b\}$ with $a\ne b$. If $ab\in E(D_u)\backslash E(D_{u'})$ then $G[D_u+u']$ has a $5$-cycle containing $u$ and $u'$, contradicting the assumption that $S$ saturates no $5$-cycle in $G$. So, $ab\notin E(D_u)\backslash E(D_{u'})$. Similarly, $ab\notin E(D_{u'})\backslash E(D_{u})$. If $ab\notin E(D_u)\cup E(D_{u'})$, then  $u,u'$ are contained in a $4$-cycle in $G$, a contradiction. Thus, $ab\in E(D_u)\cap E(D_{u'})$.
\end{proof}

Recall that for a cycle $D$ in $G$, $\overline{D}$ is the subgraph of $G$ consisting of all vertices and edges of $G$ contained in the closed disc bounded by $D$.  
In the proof of Theorem~\ref{main2}, we will need to consider
the subgraphs of a planar triangulation that lie between two separating $4$-cycles and use the following result on Hamiltonian paths in those subgraphs. 

\begin{lem}\label{nest4cycles}
Let $G$ be a $4$-connected planar triangulation in which the distance between any two vertices of degree $4$ is at least three.
Let $S$ be an independent set in $G$ such that $S$ saturates no $4$-cycle or $5$-cycle in $G$. Let $u,u'\in S$ be distinct, and $D_u, D_{u'}$ be separating $4$-cycles in $G$ containing $u$ and $u'$, respectively. Suppose $\overline{D_{u'}}\subseteq \overline{D_u}$, and $D_{u'}$ is a maximal separating $4$-cycle containing $u'$ in $G$, i.e., $\overline{D_{u'}}$ is not contained in $\overline{D}$ for any other separating $4$-cycle $D\neq D_{u'}$ with $u'\in V(D)$. Let $H$ denote the graph obtained from $\overline{D_u}$ by contracting  $\overline{D_{u'}}-D_{u'}$ to a new vertex $z$ so that $H$ is a near triangulation with outer cycle $D_u$.
Then one of the following holds:
\begin{itemize}
\item [\textup{(i)}] For any distinct $a,b\in V(D_u)$, $H-(V(D_u)\backslash \{a,b\})$ has at least two Hamiltonian paths between $a$ and $b$. 
\item [\textup{(ii)}] There exist distinct $a,b\in V(D_u)$ such that  $H-(V(D_u)\backslash \{a,b\})$ has a unique Hamiltonian path, say $P$,  between $a$ and $b$; but for any distinct $c,d\in  V(D_u)$ with $\{c,d\}\ne \{a,b\}$, $H-(V(D_u)\backslash \{c,d\})$ has at least two Hamiltonian paths between $c$ and $d$ and avoiding an edge of $P$ incident with $z$.
\end{itemize}
\end{lem}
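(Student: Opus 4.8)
The plan is to study the near triangulation $H$ through the degree-$4$ vertex $z$ created by the contraction, and to show that the \emph{only} obstruction to conclusion (i) is caused by $z$ together with the degree-$4$ vertices in its immediate vicinity, which is precisely what conclusion (ii) records. First I would fix $D_u=x_1x_2x_3x_4x_1$ in clockwise order and pin down the structure of $H$. Since $G$ is $4$-connected it has no separating triangle, so the separating $4$-cycle $D_{u'}$ is induced; hence after contracting its interior the new vertex $z$ is adjacent to all four vertices of $D_{u'}$, and $N_H(z)=V(D_{u'})$ is a chordless $4$-cycle, so $z$ has degree $4$ in $H$. I would then check that $H$ is a near triangulation with outer cycle $D_u$ and \emph{no} separating triangle: a separating triangle through $z$ is impossible because $N_H(z)$ is chordless, while one avoiding $z$ would, after undoing the contraction, separate the nonempty interior of $D_{u'}$ from the exterior of $D_u$ in $G$, contradicting $4$-connectivity. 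Finally $|V(H)|\ge 6$ because $\overline{D_{u'}}\subsetneq\overline{D_u}$. Thus Lemmas~\ref{uw-path}, \ref{uv-path}, and~\ref{lem:2Hamiltonpaths} all apply to $H$.

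For the dichotomy, for each of the six pairs $\{a,b\}\subseteq V(D_u)$ I would apply Lemma~\ref{uw-path} (when $a,b$ are opposite on $D_u$) or Lemma~\ref{uv-path} (when they are adjacent) to $H-(V(D_u)\backslash\{a,b\})$. If every pair produces at least two Hamiltonian paths between its two vertices, then conclusion (i) holds and we are done. Otherwise some pair is \emph{degenerate}, i.e. $H-(V(D_u)\backslash\{a,b\})$ has at most one Hamiltonian path between $a$ and $b$, and Lemma~\ref{lem:2Hamiltonpaths} then yields two adjacent degree-$4$ vertices of $H$ lying in $V(H)\backslash V(D_u)$.

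The crux is to localize this degeneracy at $z$ using the distance hypothesis together with the maximality of $D_{u'}$. Two genuine degree-$4$ vertices of $G$ in the annular region between $D_u$ and $D_{u'}$ cannot be adjacent, as that would place two degree-$4$ vertices of $G$ at distance $1<3$. The only other way to create a degree-$4$ vertex in the interior of $H$ is that some $q\in V(D_{u'})$ has $d_H(q)=4$, i.e. $q$ has a unique neighbour $p$ outside $\overline{D_{u'}}$; then $p$ is adjacent to both neighbours of $q$ on $D_{u'}$, and replacing $q$ by $p$ yields a separating $4$-cycle $D'$ with $\overline{D_{u'}}\subsetneq\overline{D'}$. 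If $q\neq u'$ then $u'\in V(D')$, contradicting the maximality of $D_{u'}$; hence $q=u'$, and such a vertex is adjacent to $z$. Since two distinct $D_{u'}$-vertices cannot both equal $u'$, the degenerate adjacent pair must be either $\{z,u'\}$ or $\{u',p\}$ with $p$ a genuine degree-$4$ vertex of $G$ (and then $d_G(u')\ge 5$, avoiding a distance contradiction). In every surviving case the obstruction lies within distance $2$ of $z$, i.e. it is confined to $z\cup N_H(z)$.

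With the degeneracy confined near $z$, I would identify the unique degenerate pair $\{a,b\}$ and its unique Hamiltonian path $P$ in $H-(V(D_u)\backslash\{a,b\})$, and observe that $P$ must traverse $z$ through a specific edge $zw$ with $w\in V(D_{u'})$. For every other pair $\{c,d\}$ I would delete one edge of $P$ incident with $z$ and exhibit two Hamiltonian paths between $c$ and $d$ avoiding it: after this deletion the frame $D_{u'}\cup\{z\}$ becomes a triangulated quadrilateral strip in which an edge of $D_{u'}$ plays the role of $w_1w_2$, so that Lemma~\ref{lem:r-yHamiltonian} applies, its ``degree-$3$ vertices pairwise at distance at least three'' hypothesis being exactly the image of our degree-$4$ distance hypothesis after the four boundary vertices are removed. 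I expect the main obstacle to be precisely this last step: verifying that deleting the forced $z$-edge still leaves, for \emph{every} non-degenerate pair simultaneously, a near triangulation meeting the hypotheses of Lemma~\ref{lem:r-yHamiltonian}, and confirming that the two paths it produces genuinely avoid an edge of $P$ at $z$. This is where the localization of the previous paragraph and a careful, case-dependent choice of the frame $(x_1,w_1,w_2,x_2)$ and apices $(r,y)$ are indispensable.
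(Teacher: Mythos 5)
Your setup, your dichotomy framework, and your localization argument (a $D_{u'}$-vertex $q$ with $d_H(q)=4$ must equal $u'$, by the maximality of $D_{u'}$) are all correct, and the last of these is a genuinely nice observation. But there is a real gap at the step you compress into ``I would identify the unique degenerate pair $\{a,b\}$ and its unique Hamiltonian path $P$'': knowing that any adjacent degree-$4$ pair of $H$ lies in $\{z,u'\}$ or $\{u',p\}$ does not identify the degenerate pair, nor show it is unique, nor determine the structure of $P$, because Lemma~\ref{lem:2Hamiltonpaths} is only a one-way implication. The paper's proof is instead driven by the intersection pattern $|V(D_u)\cap V(D_{u'})|\in\{0,1,2\}$ supplied by Lemma~\ref{pst:2-4cycles}, together with the outer-planarity characterization in Lemmas~\ref{uw-path} and \ref{uv-path}: a degenerate adjacent pair $\{a,b\}$ forces $H-(V(D_u)\backslash\{a,b\})$ to be outer planar, hence forces $z$ (equivalently, a vertex of $D_{u'}$) to lie in the deleted pair. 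This is what proves: (a) if $V(D_u)\cap V(D_{u'})=\emptyset$, no pair is degenerate and (i) holds; (b) if the intersection is a single vertex, degeneracy is ruled out by a further maximality argument producing a larger separating $4$-cycle through $u'$; (c) if $D_u$ and $D_{u'}$ share an edge $vw$, then saturation (no $4$- or $5$-cycle through $\{u,u'\}$) kills every adjacent pair except $\{u,x\}$, which is therefore the only possible degenerate pair. Your localization cannot substitute for (a)--(c): for instance, your surviving configuration $\{u',p\}$ with $d_H(u')=4$ and $d_G(p)=4$ is perfectly consistent with $D_{u'}$ being disjoint from $D_u$, in which case no degenerate pair exists at all; so the location of degree-$4$ vertices by itself decides neither which alternative of the lemma holds nor which pair realizes (ii).

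The finishing step is also misdescribed in a way that matters. In the paper, Lemma~\ref{lem:r-yHamiltonian} is applied to $H'=H-\{v,w,z\}$ (delete the two shared vertices \emph{and} the vertex $z$, not an edge of $P$), with frame $\{u,x,u',y'\}$, where $u'y'$ is the edge of $D_{u'}$ playing the role of $w_1w_2$; it produces \emph{one} Hamiltonian path $P_1$ from $r$ (the common neighbor of $u,x$) to $y$ (the common neighbor of $u',y'$ outside $\overline{D_{u'}}$). The two paths demanded by (ii) do not come from the lemma itself but from the two ways of reattaching $z$ at the end, $P_1\cup yu'y'z$ and $P_1\cup yy'u'z$, each of which omits one of the two edges $u'z,zy'$ of $P$; your phrase ``the two paths it produces'' suggests you expected the lemma to yield both. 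Moreover, before Lemma~\ref{lem:r-yHamiltonian} can be invoked one must prove that such a vertex $y$ exists with $G[D_{u'}+y]$ a diamond-$4$-cycle (this uses the maximality of $D_{u'}$ yet again) and that $r\notin\{y,y'\}$ (this uses saturation); without these facts the endpoints $r,y$ of the frame may coincide or be adjacent in a forbidden way and the construction collapses. These are precisely the pieces of structure your plan defers to ``a careful, case-dependent choice,'' and they constitute the core of the proof rather than a routine verification.
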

\begin{proof}
Let $D_u=uvwxu$ and $D_{u'}=u'v'w'x'u'$. Without loss of generality, assume that $u, v, w, x$ occur on $D_u$ in clockwise order, and $u', v', w',  x'$ occur on $D_{u'}$ in clockwise order.

By Lemma~\ref{pst:2-4cycles}, we have  $|V(D_u)\cap V(D_{u'})| \leq 1$ or $|E(D_u)\cap E(D_{u'})|=1$. Thus, $|V(H)|\geq 7$, and for any distinct $a,b \in V(D_u)$ with $ab\notin E(D_u)$,  $H-(V(D_u)\backslash \{a,b\})$ is not a path. So by Lemma~\ref{uw-path}, we have
\begin{claim}\label{cl:notedge_2Hamilton}
For any distinct $a,b\in V(D_u)$ with $ab\notin E(D_u)$, $H-(V(D_u)\backslash \{a,b\})$ has at least two Hamiltonian paths between $a$ and $b$.
\end{claim}

\begin{claim}\label{cl:intersection_notempty}
We may assume $V(D_u)\cap V(D_{u'})\neq \emptyset$.  
\end{claim}
\begin{proof}
For, suppose $V(D_u)\cap V(D_{u'})= \emptyset$. Then $z$ is not incident with the infinite face of $H-D_u$. So for any distinct  $a,b\in V(D_u)$ with $ab\in E(D_u)$, $H-(V(D_u)\backslash \{a,b\})$ cannot be an outer planar graph. Thus, by 
 Lemma~\ref{uv-path}, $H-(V(D_u)\backslash \{a,b\})$ has at least two Hamiltonian paths between $a$ and $b$. So (i) holds by Claim~\ref{cl:notedge_2Hamilton}. 
\end{proof}
\begin{claim}\label{cl:intersection_two}
   We may further assume that $|V(D_u)\cap V(D_{u'})|=2$. 
\end{claim}
\begin{proof}
For, suppose $V(D_u)\cap V(D_{u'})$ consists of exactly one vertex, say $y$. Then $y\in \{v,w,x\}\cap \{v',w',x'\}$. We show that (i) holds. By Claim~\ref{cl:notedge_2Hamilton}, it suffices to consider distinct $a,b\in V(D_u)$ with $ab\in E(D_u)$. By Lemma~\ref{uv-path}, it suffices to show that $H-(V(D_u)\backslash \{a,b\})$ is not outer planar. 

Let $a,b\in V(D_u)$ with $ab\in E(D_u)$. If $y\in \{a,b\}$, then $z$ is not incident with the infinite face of $H-(V(D_u)\backslash \{a,b\})$; so $H-(V(D_u)\backslash \{a,b\})$ is not outer planar. Hence we may assume that $y\notin \{a,b\}$. Let $D_u=yy_1y_2y_3y$ and assume that $y,y_1,y_2,y_3$ occur on $D_u$ in clockwise order. Then $\{a,b\}=\{y_1,y_2\}$ or $\{a,b\}=\{y_2,y_3\}$.

First, assume that $y\in \{v',x'\}$. We consider $y=v'$, as the other case $y=x'$ is symmetric. If $H-\{y,y_3\}$ is outer planar, then $u'$ is adjacent to the vertices $y,y_3$ in $V(D_u)$. Since $yy_3\in E(D_u)$, $G[D_u+u']$ has a 5-cycle containing $u$ and $u'$, contradicting the assumption that $S$ saturates no 5-cycle. Hence, $H-(V(D_u)\backslash \{y_1,y_2\})=H-\{y,y_3\}$ is not outer planar. It remains to consider $H-(V(D_u)\backslash \{y_2,y_3\})=H-\{y,y_1\}$. Suppose $H-\{y,y_1\}$ is outer planar. Then $w'$ and $x'$ are incident with the infinite face of $H-\{y,y_1\}$ and $w'y_1\in E(G)$. We claim that $x'y_1\in E(G)$; otherwise $x'y\in E(G)$, implying that $x'w'yx'$ or $x'u'yx'$ is a separating triangle in $G$, a contradiction.
But then $D=u'yy_1x'u'$ is a separating 4-cycle in $G$ containing $u'$, and $\overline{D}$ properly contains $\overline {D_{u'}}$, contradicting the maximality of $D_{u'}$.

Suppose $y=w'$. For $\{a,b\}=\{y_1,y_2\}$ or $\{a,b\}=\{y_2,y_3\}$, if $H-(V(D_u)\backslash \{a,b\})$ is not outer planar, then $u'y_3\in E(G)$ or $u'y_1\in E(G)$. So $D=u'y_3w'x'u'$ or $D=u'y_1w'v'u'$ is a separating $4$-cycle in $G$ such that $\overline{D}$ properly contains $\overline{D_{u'}}$. Thus, $H-(V(D_u)\backslash \{a,b\})$ cannot be outer planar for $\{a,b\}=\{y_1,y_2\}$ or $\{a,b\}=\{y_2,y_3\}$.
\end{proof}

By Claim~\ref{cl:intersection_two}, $|E(D_u)\cap E(D_{u'})|=1$; so $V(D_u)\cap V(D_{u'})=\{v,w\}$ or  $V(D_u)\cap V(D_{u'}) =\{w,x\}$. By the symmetry among the edges in $D_u$ and between the two orientations of $D_u$, we may further assume  $V(D_u)\cap V(D_{u'})=\{v,w\}$.
\begin{claim}\label{cl:2Hamilton_exceptone}
For $\{a,b\}\subseteq V(D_u)$ with $ab\in E(D_u)$, if $\{a,b\}\neq \{u,x\}$, then $H-(V(D_u)\backslash \{a,b\})$ has at least two Hamiltonian paths between $a$ and $b$.
\end{claim}
\begin{proof}
For $\{a,b\}=\{v,w\}$, since $z$ is not incident with the infinite face of $H-\{x,u\}$, $H-(V(D_u)\backslash \{a,b\})=H-\{x,u\}$ is not outer planar and has at least two Hamiltonian paths between $a$ and $b$ by Lemma~\ref{uv-path}.

For $\{a,b\}=\{u,v\}$ or $\{a,b\}=\{w,x\}$, $H-(V(D_u)\backslash \{a,b\})$ cannot be outer planar. Otherwise, one can check that $\{u,u'\}$ is contained in a 4-cycle or 5-cycle in $G$. Hence, by Lemma~\ref{uv-path}, there exist at least two Hamiltonian paths between $a$ and $b$ in $H-(V(D_u)\backslash \{a,b\})$ when $\{a,b\}=\{u,v\}$ or $\{a,b\}=\{w,x\}$.
\end{proof}

By Claim~\ref{cl:2Hamilton_exceptone}, we may assume that $H-\{v,w\}$ has a unique Hamiltonian path $P$ between $u$ and $x$, as otherwise (i) holds. It follows from Lemma~\ref{uv-path} that $H-\{v,w\}$ is an outer planar near triangulation. Let $y'$ denote the vertex in $V(D_{u'})\backslash \{u',v,w\}$; so $y'=x'$ or $y'=v'$. Note $V(uPz)\subseteq N_H(v)$, $V(xPz)\subseteq N_H(w)$, and $P$ contains $u'zy'$. Let $r$ denote the unique vertex in $N_H(u)\cap N_H(x)$. Observe that $\{v,w\}=\{v',w'\}$ or $\{v,w\}=\{w',x'\}$. Recall the definition of diamond-$4$-cycle in Figure~\ref{diamond cycle}.

\begin{claim}\label{cl:Du'y_diamond_4}
There exists a vertex $y\in V(P) \backslash \{u,x,z\}$ such that $yu'\in E(P)$ and $D':=G[D_{u'}+ y]$ is a diamond-$4$-cycle with $u'$ and $y$ as crucial vertices. Moreover, $r\notin \{y,y'\}$.
\end{claim}
\begin{proof}
First, suppose $\{v,w\}=\{v',w'\}$, i.e. $v=v'$ and $w=w'$. Then $y'=x'$. Hence, there exists a vertex $y$ in $V(uPu') \backslash \{u,u'\}$ such that $yu'\in E(P)$ and $yv\in E(G)$. 
If $yx'\notin E(G)$, then $u'$ has a neighbor $z'$ in $V(xPx')$ since $H-\{v,w\}$ is an outer planar near triangulation; now $u'v'w'z'u'$ is a separating 4-cycle in $G$ containing $u'$ (as $z'w=z'w'\in E(G)$), contradicting the maximality of $D_{u'}$. Therefore, $yx'\in E(G)$ and $G[D_{u'}+ y]$ is a diamond-$4$-cycle with crucial vertices $u'$ and $y$.
Moreover, $r\notin \{y,x'\}=\{y,y'\}$; otherwise, $uvu'yu$ (when $r=y$) or $uvu'x'u$ (when $r=y')$ is a 4-cycle saturated by $S$, a contradiction.

Now assume that $\{v,w\}=\{w',x'\}$, i.e., $v=w'$ and $w=x'$. Then $y'=v'$.
Observe that $u'x\notin  E(G)$, otherwise $uvwu'xu$ is a $5$-cycle in $G$ saturated by $S$, a contradiction. Hence, there exists $y\in V(u'Px)\backslash \{u',x\}$ such that $yu'\in E(P)$ and $yw\in E(G)$. Now $yv'\in E(G)$ by the maximality of $D_{u'}$. Therefore, $G[D_{u'}+ y]$ is a diamond-$4$-cycle in $G$ in which $u',y$ are crucial vertices. If $r=y'=v'$ then $u v w u' y' u$ is a $5$-cycle in $G$ containing $\{u,u'\}$, and if $r=y$ then $uy u' w x u$ is a $5$-cycle in $G$ containing $\{u,u'\}$. This contradicts the assumption that $S$ saturates no $5$-cycle in $G$, completing the proof of Claim \ref{cl:Du'y_diamond_4}.
\end{proof}

 We need another claim, in order to show that for any $\{c,d\}\neq \{u,x\}$, $H-(V(D_u)\backslash \{c,d\})$ has at least two Hamiltonian paths between $c$ and $d$ and not containing $u'zy'$.
Let $H':=H-(V(D_u)\cap V(D_{u'})\cup \{z\})=H-\{v,w,z\}$. Then $H'$ is an outer planar near triangulation and $H'\subseteq G$. 

\begin{claim}
  $r\in V(H')\backslash\{y,u',y'\}$ and $H'-\{u,x,u',y'\}$ has a Hamiltonian path $P_1$ between $r$ and $y$.
\end{claim}
\begin{proof}
Since $r\in N_G(u)$ and $S$ is independent, $r\neq u'$.
By Claim~\ref{cl:Du'y_diamond_4}, $r\notin \{y,y'\}$ and $y\notin \{u,x,y'\}$. Thus, $r\notin \{y,y',u'\}$. 

Let $C$ denote the outer cycle of $H'$. Then $ux, u'y'\in E(C)$. 
We may assume $y'=x'$; the other case is similar.

Since $G$ contains no separating triangle, each edge in $H'-E(C)$ is incident with both $uPu'$ and $xPy'$.
Since $V(uPu')\subseteq N_G(v)$ and $V(xPy')\subseteq N_G(w)$, every degree $4$ vertex of $G$ in $V(H')\backslash \{u,x,u',y'\}$ has degree $3$ in $H'$. Hence, by assumption of the lemma, the distance between any two degree $3$ vertices of $H'$, contained in $V(H')\backslash\{u,x,u',y'\}$, is at least three in $H'$. Applying Lemma~\ref{lem:r-yHamiltonian} to $H'$, we see that $H'-\{u,x,u',y'\}$ has a Hamiltonian path $P_1$ between $r$ and $y$.
 \end{proof}

 Let $Q_1:=P_1\cup yu'y'z$ and $Q_2:=P_1\cup yy'u'z$. Then $Q_1$ and $Q_2$ are two distinct Hamiltonian paths between $r$ and $z$ in $H-V(D_u)$, and neither contains $u'zy'$. 
We now show that (ii) holds with $\{a,b\}=\{u,x\}$. Let $c, d\in V(D_u)$ be distinct such that $\{c,d\}\neq \{u,x\}$. Observe that one vertex in $\{c,d\}$ is a neighbor of $r$ and the other is a neighbor of $z$. We may assume $c\in N_H(r)$ and $d\in N_H(z)$. Then $cr\cup Q_1\cup zd, cr \cup Q_2 \cup zd$ are two distinct Hamiltonian paths in $H-(V(D_u)\backslash \{c,d\})$ between $c$ and $d$ and not containing $u'zy'$.
\end{proof}

We also need the following result, which is given implicitly in the proof of Theorem 1.3 in \cite{Liu-Yu2021}.

\begin{lem}[Liu and Yu \cite{Liu-Yu2021}]\label{nest4cycles_1}
Let $G$ be a $4$-connected planar triangulation. Assume that $G$ contains a collection of separating $4$-cycles, say ${\cal D}=\{D_1, D_2, \ldots, D_{t+1}\}$, such that $\overline{D_1}\supseteq \overline {D_2}\supseteq \cdots \supseteq \overline{D_{t+1}}$. For $j\in [t]$, let $G_j$ be the graph obtained from $\overline{D_j}$ by contracting $\overline{D_{j+1}}-D_{j+1}$ to a new vertex, denoted by $z_{j+1}$. Suppose the conclusion of Lemma~\ref{nest4cycles} holds for $G_j$ and $z_{j+1}$ (as $H$ and $z$, respectively, in Lemma~\ref{nest4cycles}). Then $G$ has at least $2^{\sqrt{t}}$ Hamiltonian cycles.

\end{lem}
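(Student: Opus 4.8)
The plan is to exploit the nested separating $4$-cycles to cut $G$ into annular pieces and to assemble Hamiltonian cycles of $G$ by gluing Hamiltonian paths that can be chosen independently in many of the pieces. Write $R_0$ for the part of $G$ lying outside $\overline{D_1}$ (with $D_1$ as its inner boundary), $R_{t+1}=\overline{D_{t+1}}$ for the innermost disc, and for $j\in[t]$ let $R_j$ be the subgraph lying between $D_j$ and $D_{j+1}$, so that $G_j$ is precisely $R_j$ with the inner disc contracted to the apex $z_{j+1}$. The decisive structural point is that a Hamiltonian cycle of $G$ crosses each separating $4$-cycle $D_j$ in exactly two of its vertices; deleting the other two vertices of $D_j$ (which are covered from the region immediately outside $D_j$) and contracting the inside, the cycle restricts on $G_j$ to a Hamiltonian path of $G_j-(V(D_j)\backslash\tau_j)$ between the crossing pair $\tau_j\subseteq V(D_j)$, and the two edges of this path at $z_{j+1}$ single out the pair $\tau_{j+1}\subseteq V(D_{j+1})$ at which the cycle descends into $\overline{D_{j+1}}$. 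Lemma~\ref{pst:2-4cycles} controls how consecutive cycles meet (sharing at most an edge), which keeps this crossing geometry clean. Conversely, any compatible system of such paths $P_j$, one per region, in which the handoff $\tau_{j+1}$ produced by $P_j$ is exactly the endpoint pair required of $P_{j+1}$, glues into a Hamiltonian cycle of $G$. Because the regions are internally edge-disjoint, two systems differing in even a single $R_j$ yield distinct Hamiltonian cycles, so it suffices to produce $2^{\sqrt t}$ compatible systems.

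First I would fix the two ends of the construction so that only the intermediate layers drive the count. Contracting $\overline{D_1}-D_1$ in $G$ produces a $4$-connected planar triangulation, which is Hamiltonian by Tutte's theorem (equivalently by Lemma~\ref{tuttepath}); this yields a fixed Hamiltonian path in $R_0$ and hence a fixed outermost terminal pair $\tau_1$ on $D_1$. At the inner end, applying Lemma~\ref{2vts} to the circuit graph $(\overline{D_{t+1}},D_{t+1})$ guarantees, for whichever pair is handed to it, a Hamiltonian path of $\overline{D_{t+1}}$ through the two crossing vertices, so the innermost disc never obstructs the gluing. With both ends available, the entire counting reduces to the branching supplied by Lemma~\ref{nest4cycles}: for a terminal pair $\tau_j$ that is not the exceptional pair of $G_j$ (and for \emph{all} pairs in case~(i)), Lemma~\ref{nest4cycles} gives at least two Hamiltonian paths of $G_j-(V(D_j)\backslash\tau_j)$ between the vertices of $\tau_j$, and in case~(ii) these two paths may additionally be required to avoid a prescribed edge of the exceptional path at $z_{j+1}$, i.e.\ their handoffs can be steered away from a designated vertex of $D_{j+1}$.

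To turn local branching into a global lower bound I would track the sequence $\tau_1,\tau_2,\ldots,\tau_{t+1}$ as a walk on the at most $\binom{4}{2}=6$ possible crossing pairs per layer. The cleanest situation is when, for a given $\tau_j$, two of the guaranteed Hamiltonian paths produce the \emph{same} handoff $\tau_{j+1}$: then the choice between them alters only $R_j$, leaves every other region's requirement untouched, and so contributes a factor of $2$ that is independent of all other layers. When the two paths instead produce different handoffs, I would use the refined conclusion of Lemma~\ref{nest4cycles} (the avoidance of an edge at $z_{j+1}$) to steer the two branches and, by a pigeonhole argument over the constantly many pair-types, force two branches issuing from a common terminal pair to reconverge to a common terminal pair a bounded number of layers later; each such branch-and-reconverge ``diamond'' resets the walk at its far end and hence again contributes an independent factor of $2$. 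Fixing a single admissible terminal sequence joining $\tau_1$ to the inner pair realized by $\overline{D_{t+1}}$, the number of compatible systems is then at least $2$ raised to the number of mutually independent factors obtained along that sequence.

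The main obstacle — and the source of the square root rather than a linear exponent — is precisely guaranteeing \emph{enough} mutually independent binary choices once the outer and inner terminal pairs are pinned down. A single branch need not reconverge quickly in the worst case where every intermediate layer realizes the exceptional pair (case~(ii)) with distinct, non-colliding handoffs, so one cannot naively claim a factor of $2$ at every layer. To extract many independent factors I would apply an extremal selection (an Erd\H{o}s--Szekeres / Dilworth-type argument on the partial order recording which layers' branches can be activated simultaneously without disturbing one another's handoffs) to find a mutually compatible subfamily of $\Omega(\sqrt t)$ layers; activating exactly these gives $\Omega(\sqrt t)$ independent binary choices and hence at least $2^{\sqrt t}$ distinct compatible systems. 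Verifying that this subfamily can always be taken of size $\sqrt t$, and that the chosen branches remain compatible with the fixed innermost Hamiltonian path of $\overline{D_{t+1}}$, is the technical heart of the argument and the step I expect to require the most care.
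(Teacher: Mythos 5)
You should know at the outset that the paper never proves this lemma: it is imported verbatim from \cite{Liu-Yu2021}, where it is said to be implicit in the proof of Theorem 1.3 of that paper. So your proposal has to stand on its own, and on its own terms it is a framework plus three unproved claims rather than a proof. The framework itself (annular regions, crossing pairs $\tau_j$, handoff pairs read off from the two path-edges at $z_{j+1}$, gluing a compatible system of paths into a Hamiltonian cycle, distinctness because the regions are internally disjoint) is correct and is certainly the setting in which the cited proof operates; the gap is that nothing you write actually forces $2^{\sqrt t}$ compatible systems to exist.

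The concrete failures are these. First, you over-read case (ii) of Lemma~\ref{nest4cycles}: it supplies, for a non-exceptional pair, two Hamiltonian paths each avoiding \emph{some} edge of $P$ incident with $z$ --- equivalently, whose handoff pair differs from the pair $\phi_j$ that the exceptional path $P_j$ hands off --- but it does not let you prescribe \emph{which} edge is avoided, so handoffs cannot be ``steered away from a designated vertex of $D_{j+1}$.'' In particular, the avoidance constraint is relative to $\phi_j$, not to the exceptional pair $F_{j+1}$ of the next layer, so nothing excludes the possibility that \emph{every} available path at a branching layer hands off exactly $F_{j+1}$ and the construction re-enters a forced chain; your ``branch-and-reconverge diamond'' and the pigeonhole reconvergence claim are therefore unsupported. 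Second, because you fix $\tau_1$ once and for all from an arbitrary Hamiltonian cycle of the outer contraction, your scheme can die before it starts: if $\tau_1$ is the exceptional pair of $G_1$ and the forced handoffs satisfy $\phi_j=F_{j+1}$ for all $j$, then every layer is entered at its exceptional pair, there is never a single branching, your construction outputs exactly one cycle, and the Dilworth step never activates. Any correct argument must exploit the freedom in how the outermost cycle crosses $D_1$ (every pair of $V(D_1)$ is realizable via Tutte-path arguments) and play the forced and unforced starts against one another; none of this is in the proposal. Third, the Erd\H{o}s--Szekeres/Dilworth extraction of $\Omega(\sqrt t)$ ``mutually compatible'' layers --- the only place where $\sqrt t$ would actually appear --- is a named hope, not an argument: no partial order is defined, no bound on the length or interaction of forced runs is proved, and you flag the step yourself as unverified, yet it is precisely the content of the lemma. (A minor further point: Lemma~\ref{2vts} yields a Tutte \emph{cycle} through an edge and two vertices; what the innermost disc must supply is a Hamiltonian path of $\overline{D_{t+1}}-(V(D_{t+1})\setminus\{p,q\})$ between the handed-off pair $p,q$, which needs Lemmas~\ref{uw-path} and~\ref{uv-path} rather than Lemma~\ref{2vts}.)
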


\begin{proof}[Proof of Theorem~\ref{main2}] Note that, for any two distinct vertices $x,y$ of degree $4$ in $G$, we have $N_G(x)\cap N_G(y)=\emptyset$ as $d_G(x,y)\geq 3$. Hence, the number of vertices of degree $4$ in $G$ is at most $n/5$. Thus, since $|E(G)|=3n-6$ and $\delta(G)\geq 4$, there exist at least $n/5$ vertices of degree $5$ or $6$ in $G$. Then by the Four Color Theorem, there is an independent set $I$ such that  every vertex in $I$ has degree $5$ or $6$ in $G$ and $|I|\geq (n/5)/4 = n/20$. We may assume that
\begin{itemize}
    \item [(1)] $G$ has an independent set $S\subseteq I$ of size $\Omega(n^{3/4})$ such that $S$ saturates no $4$-cycle, or $5$-cycle, or diamond-$6$-cycle in $G$.
\end{itemize}
For, otherwise, by Lemma~\ref{2cases}, there exist distinct $v,x\in V(G)$ such that $|N_G(v)\cap N_G(x)\cap I|\geq c_0 n^{1/4}$ for some constant $c_0>0$. Since any two vertices of degree $4$ in $G$ have distance at least three, $G[N_G[v]\cup N_G[x]]$ contains separating $4$-cycles $C_1, \ldots, C_k$ in $G$, where $k \geq c_0 n^{1/4}-1$, such that $|V(\overline{C_i})|\geq 6$ for each $i\in [k]$, and $\overline{C_i}-C_i$, $\overline{C_j}-C_j$ are disjoint whenever $1\leq i\ne j \leq k$. Let $G^*$ be the graph obtained from $G$ by contracting $\overline{C_i}-C_i$ to a new vertex $v_i$, for $i\in [k]$. 
Then $G^*$ is a $4$-connected planar triangulation and, hence, has a Hamiltonian cycle, say $H$. 

Let $a_i,b_i\in N_{G^*}(v_i)$ such that $a_iv_ib_i\subseteq H$ for $i\in [k]$. 
Since $|V(\overline{C_i})|\geq 6$ and no vertices of degree $4$ in $G$ are adjacent, it follows from Lemma~\ref{lem:2Hamiltonpaths} that $\overline{C_i}-(V(C_i)\backslash \{a_i,b_i\})$ has at least two Hamiltonian paths between $a_i$ and $b_i$. We can form a Hamiltonian cycle in $G$ by taking the union of $H-\{v_i: i\in [k]\}$ and one Hamiltonian path  between $a_i$ and $b_i$ in  $\overline{C_i}-(V(C_i)\backslash \{a_i,b_i\})$ for each $i\in [k]$. Thus, $G$ has at least $2^k \ge 2^{c_0n^{1/4}-1}$ Hamiltonian cycles and we are done. This completes the proof of (1).

\medskip

For each $u\in S$, recall the link $A_u$ defined in Section 2. 
We may assume that
\begin{itemize}
    \item [(2)] there exists $S_1\subseteq S$ such that $|S_1|\geq |S|/2$ and, for each $u\in S_1$, $d_G(u)-|A_u|\geq 2$ and $u$ is contained in a separating $4$-cycle $D$ in $G$ with $|V(\overline{D})|\geq 6$.
\end{itemize}
Suppose we have $S_2\subseteq S$ with $|S_2|\geq|S|/2$ such that $d_G(u)-|A_u|\leq 1$ for all $u\in S_2$. Hence, for any $u\in S_2$, $|A_u|\geq 4 $ if $d_G(u)=5$; and $|A_u|\geq 5$ if $d_G(u)=6$. Let $F$ be any subset of $E(G)$ with $|F|=|S_2|$ and $|F\cap A_u|=1$ for each $u\in S_2$. By Lemma~\ref{4conn}, $G-F$ is $4$-connected; so $G-F$ has a Hamiltonian cycle by Tutte's theorem. Let $\cal{C}$ be a collection of Hamiltonian cycles in $G$ by taking precisely one Hamiltonian cycle in $G-F$ for each choice of $F$. Let $a_1$ and $a_2$ denote the number of vertices in $S_2$ of degree $5$ and $6$ in $G$, respectively. There are at least $4^{a_1}5^{a_2}$ choices of the edge set $F\subseteq E(G)$. Each Hamiltonian cycle of $G$ in ${\cal C}$ is chosen at most $(5-2)^{a_1}(6-2)^{a_2}=3^{a_1}4^{a_2}$ times. Thus $|{\cal C}|\ge (4/3)^{a_1}(5/4)^{a_2}\geq (5/4)^{a_1+a_2}=(5/4)^{|S_2|}\geq (5/4)^{\Omega(n^{3/4})}$.

Hence, we may assume that there exists $S_1\subseteq S$ such that $|S_1|\geq |S|/2$ and $d_G(u)-|A_u|\geq 2$ for all $u\in S_1$.
For each $u\in S_1$, since $d_G(u)-|A_u|\geq 2$, there exist at least two edges $e_1$ and $e_2$ incident with $u$ such that $G-e_i$ is not $4$-connected for $i\in [2]$. Since $u$ has at most one neighbor of degree $4$ in $G$ (by assumption), there exists $i\in [2]$ such that a $3$-cut of $G-e_i$ and $u$ induce a separating $4$-cycle $D_u$ in $G$ with $|V(\overline{D_u})|\geq 6$. This completes the proof of (2).
 
 \medskip
 
 For each $u\in S_1$, we choose a maximal separating $4$-cycle $D_u$ containing $u$. Note $|V(\overline{D_u})|\geq 6$.
 Let ${\cal D}=\{D_u: u\in S_1\}$. Since $S_1$ saturates no $4$-cycle, $D_u\neq D_{u'}$ for any distinct $u,u'\in S_1$ and $|{\cal D}|=|S_1|\geq |S|/2$. By Lemma~\ref{pst:2-4cycles}, for any distinct $D_1,D_2\in{\cal D}$, either $\overline{D_1}-D_1$ and $\overline{D_2}-D_2$ are disjoint, or $\overline{D_1}$ contains $\overline{D_2}$ or vice versa. We may assume that 
\begin{itemize}
    \item [(3)] there exist $D_1, D_2,\dots , D_{t+1}\in {\cal D}$, where $t=\Omega({n^{1/2}})$, such that $\overline{D_1}\supseteq \overline{D_2 }\supseteq \dots \supseteq \overline{D_{t+1}}$.
\end{itemize}
For, otherwise, since $|{\cal D}|= {\color{blue} |S_1|\geq |S|/2=}\Omega({n^{3/4}})$, there exist separating 4-cycles $D_1',\ldots, D_k'\in \cal D$, where $k=\Omega({n^{1/4}})$, such that $|V(\overline{D_i'})|\geq 6$ for $i\in [k]$, and  $\overline{D_i'}-D_i'$, $\overline{D_j'}-D_j'$ are disjoint for $1\leq i \neq j \leq k$. Hence, $G$ has at least $2^k$ Hamiltonian cycles, as shown in the first paragraph in the proof of (1). This completes the proof of (3).

\medskip

For each $j\in [t]$, 
let $G_j$ denote the graph obtained from $\overline{D_j}$ by contracting $\overline{D_{j+1}}-D_{j+1}$ to a new vertex $z_{j+1}$. Note that $G_j$ is a near triangulation with outer cycle $D_j$ and that $G_j$ contains the $4$-cycle $D_{j+1}$. 

By Lemma~\ref{pst:2-4cycles} and the definition of $\cal {D}$, we see that $D_{j+1}$, $D_j$, $G_j$, and $G$ (as $D_{u'}, D_{u}, H, G$, respectively,  in Lemma~\ref{nest4cycles}) for $j\in[t]$, satisfy the conditions in Lemma~\ref{nest4cycles}. Hence, by Lemma~\ref{nest4cycles} and Lemma~\ref{nest4cycles_1}, $G$ has at least $2^{\sqrt{t}}=2^{\Omega(n^{1/4})}$ Hamiltonian cycles. 
\end{proof}

\end{document}